\documentclass[11pt,oneside]{amsart}
\usepackage[a4paper, total={6in, 8.5in}]{geometry}
\usepackage[mathscr]{euscript}
\usepackage{amsthm}
\usepackage{amssymb}
\usepackage{hyperref}
\usepackage{graphicx}
\usepackage{float}
\usepackage{bold-extra}

\newcommand{\m}[1]{{\uppercase {\mathbf {#1}}}}
\newcommand{\sm}[1]{{\mbox{\scriptsize {\uppercase {\bf {#1}}}}}}
\newcommand{\vr}[1]{{\uppercase {\mathcal{#1}}}}
\newcommand{\hsp}{\mathsf{HSP}}

\newcommand{\aut}[1]{{\mathrm{Aut}\:\m #1}}
\newcommand{\clo}[1]{{\mathrm{Clo}\:\m #1}}
\newcommand{\clon}[2]{{\mathrm{Clo}_{#1}\m #2}}
\newcommand{\pol}[1]{{\mathrm{Pol}\:\m #1}}
\newcommand{\poln}[2]{{\mathrm{Pol}_{#1}\m #2}}
\newcommand{\con}{{\rm Con\:}}
\newcommand{\cn}[1]{{\con\m {#1}}}
\newcommand{\Cn}[1]{{{\bf\con}\m {#1}}}
\newcommand{\Sub}[1]{{\mathrm{Sub}\:\m #1}}

\newcommand{\Sg}{{\mathrm{Sg}}}

\def\Dj{\mbox{\raise0.3ex\hbox{-}\kern-0.4em D}}

\newcommand{\lb}{\langle}
\newcommand{\rb}{\rangle}
\newcommand{\mt}{\wedge}

\newcommand{\usub}{\subseteq}

\newcommand{\subp}{{\sf {SP}}}
\newcommand{\sub}{{\sf {S}}}

\newtheorem{thm}{Theorem}
\newtheorem{lm}[thm]{Lemma}
\newtheorem{prp}[thm]{Proposition}
\newtheorem{cor}[thm]{Corollary}
\theoremstyle{definition}
\newtheorem{df}{Definition}
\newtheorem{bax}{Base Axiom}
\newtheorem{bax1}{Stronger Base Axiom}
\newtheorem{hax}{Homomorphism Axiom}
\newtheorem{rax}{Relational Axiom}
\newtheorem{rmk}{Remark}

\newtheorem{ex}{Example}

\newtheorem{claim}{\bf Claim}

\begin{document}

\title[Colored edges and binary absorption in minimal Taylor algebras]{The colored edge theory of A. Bulatov and binary absorption in minimal Taylor algebras}
\author[Z. Brady]{Zarathustra Brady}
\author[P. \Dj api\'c]{Petar \Dj api\'c}
\address{Department of Mathematics and Informatics\\ 
University of Novi Sad\\ Serbia}
\email{petarn@dmi.uns.ac.rs}
\author[P. Markovi\'c]{Petar Markovi\'c}
\address{Department of Mathematics and Informatics\\ 
University of Novi Sad\\ Serbia}
\email{pera@dmi.uns.ac.rs}
\author[A. Proki\'c]{Aleksandar Proki\'{c}}
\address{Faculty of Technical Sciences\\ 
University of Novi Sad\\ Serbia}
\email{aprokic@uns.ac.rs}
\author[V. Uljarevi\'c]{Vlado Uljarevi\'c}
\address{Department of Mathematics and Informatics\\ 
University of Novi Sad\\ Serbia}
\email{vlado.uljarevic@dmi.uns.ac.rs}
\thanks{P. \Dj api\'c, P. Markovi\'c and V. Uljarevi\'c were supported by the Ministry of Education, Science and Technological Development of the Republic of Serbia (Grants No. 451-03-137/2025-03/200125 and 451-03-136/2025-03/200125). Aleksandar Proki\'{c} was supported by the Ministry of Education, Science and Technological Development of the Republic of Serbia (Grant No. 451-03-137/2025-03/200156.)}

\keywords{Constraint Satisfaction Problem, minimal Taylor algebras, Bulatov's colored edges, binary absorption, Dichotomy Theorem}

\begin{abstract}
We find a new definition of colored edge graphs of finite algebras in the case of minimal Taylor algebras, a definition which includes the graphs invented by A. Bulatov. Next we proceed to reprove the main results of A. Bulatov's theory in the case of minimal Taylor algebras and in our setting, finding several simplifications compared to the more general case of smooth algebras Bulatov considered.
\end{abstract}

\maketitle

\section{Introduction}

This paper, the first in a series of at least three, is concerned with the Constraint Satisfaction Problem, and its computational complexity. For three decades or so, the main focus in the investigation of computational complexity of the Constraint Satisfaction Problem was the Dichotomy Conjecture of Feder and Vardi, stated in \cite{FV}, which said that, for any finite relational structure $\mathbb{A}$ the complexity of $CSP(\mathbb{A})$ can be either tractable, or NP-complete. We start the investigation of the theories behind the two proofs of the Dichotomy Conjecture, by D. Zhuk \cite{Zhdich} and by A. Bulatov \cite{Budich}, and try to simplify them, connect them and find simpler methods than either by combining their ideas. The first effort towards unification of these proofs was in the seminal paper \cite{dreamteam}, where the minimal Taylor algebras were introduced as the proper setting in which Dichotomy can be analyzed. We will work within minimal Taylor algebras throughout our project.

In this paper our main focus is on simplifying the initial part of the colored edge theory of A. Bulatov. After distilling the important properties colored edges need to satisfy, we call them Edge Axioms, we will prove that Bulatov's edges satisfy them. We define another type of colored edges which also satisfy Edge Axioms, but are significantly easier to define than Bulatov's. Next, we prove that any colored graphs which satisfy Edge Axioms also satisfy the initial results of Bulatov's theory. We are confident that the remainder of Bulatov's theory, within the framework of minimal Taylor algebras, would be correct for any graphs satisfying the Edge Axioms (with Bulatov's original proofs). These initial results we prove are related to binary absorption, and we manage to simplify some of the proofs of Bulatov from \cite{BulatovGraph1}, \cite{BulatovGraph2} and a theorem from \cite{dreamteam}. In the penultimate section, we review the construction by M. Mar\'oti which uses unary polynomials and binary absorption, and we apply it to prove a connection between a partial result by Zhuk and another one by Bulatov. We finish the paper with an announcements of results we will prove in subsequent papers.

\section{Background and Notation}

\subsection{Universal algebra} We assume the reader has a thorough understanding of the basic notions of Universal Algebra, the readers who need help with basic Universal Algebra are advised to look up the missing parts in \cite{burris-sank}, or \cite{alvin1}, \cite{alvin2} and \cite{alvin3}. The notions of the centralizer condition and the centralizer (some call it the annihilator) from Commutator Theory, which are used in Section 8, are defined in the book \cite{comm}. In this paper we will deal with finite and idempotent algebras, so we assume that any algebra we mention is finite and idempotent, unless stated otherwise. Here we state the results and define the notions which are less standard, or more obscure, but will be used in the paper.

Following A. Bulatov, we define the hypergraph of proper subuniverses:

\begin{df}\label{H(A)def}
Let $\m a$ be an algebra. By $H(\m a)$ we denote the set of all proper subuniverses, i.e., $H(\m a):=\{B\subsetneq A:B\in \Sub{a}\}$.
\end{df}

We consider $H(\m a)$ as a hypergraph, so that we can use the related terminology of paths, connectedness etc. Next we recall the definition of a tolerance and A. Bulatov's notions of a link tolerance and link congruence.

\begin{df}\label{toldef}
Let $\m a$ be an algebra. A {\em tolerance} of $\m a$ is a reflexive, symmetric and compatible binary relation on $\m a$. The transitive closure of a tolerance is, of course, a congruence. When the transitive closure of the tolerance $\tau$ on $\m a$ is $A\times A$, we say that $\tau$ is {\em connected}. When the only tolerances on $\m a$ are the diagonal and the full relation, we say that $\m a$ is {\em tolerance-free}.
\end{df}

\begin{df}\label{linkdef}
Let $\m a_1,\dots,\m a_n$ be algebras and $\m r\leq_{sd}\m a_1 \times\dots \times \m a_n$. For any $i\leq n$, the $i$th {\em link tolerance} of $\m r$, denoted by $tol_iR$ is defined by
\[
\begin{gathered}
tol_iR:=\{(a,b)\in A_i\times A_i:(\exists a_1,\dots,a_{i-1},a_{i+1},\dots,a_n)\\
(a_1,\dots,a_{i-1},a,a_{i+1},\dots,a_n)\in R\text{ and }(a_1,\dots,a_{i-1},b,a_{i+1},\dots,a_n)\in R\}
\end{gathered}
\]
The transitive closure of $tol_iR$ is the $i$th link congruence of $\m r$, denoted by $lk_iR$.
\end{df}

Note that $tol_iR$ can be defined even if $R$ has no algebraic structure, but then it would be just a symmetric relation. Reflexivity of $tol_iR$ follows from subdirectness of $R$ and compatibility of $tol_iR$ from the compatibility of $R$.

We define the neighbors of a point just for binary relations, though the extension to $n$-ary relations is possible.

\begin{df}
Let $\m a$ and $\m b$ be idempotent algebras and $\m r\leq\m a \times \m b$. For any $a\in A$, the set of right $R$-neighbors of $a$, denoted by $[a]R$, are the set $\{x\in B:(a,x)\in R\}$. Similarly, the set of left $R$-neighbors of $b\in B$, denoted by $R[b]$ is $\{x\in A:(x,b)\in R\}$.
\end{df}

Both $[a]R$ and $R[b]$ are subuniverses of $\m b$ and $\m a$, respectively, which is proved by using primitive positive definitions (for short, pp-definitions) and idempotence.

\begin{prp}\label{conntolH}
Let $\m a$ be an idempotent algebra and $\tau$ a connected tolerance of $\m a$. Then either $\tau=A^2$, or $H(\m a)$ is a connected hypergraph.
\end{prp}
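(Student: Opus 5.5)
Assume $\tau\neq A^2$; the plan is to find enough proper subuniverses to chain any two points together. The natural candidates are the neighborhoods $[a]\tau=\{x\in A:(a,x)\in\tau\}$. Each is a subuniverse because $\tau$ is compatible and $\m a$ is idempotent (pp-definition with the constant $a$). Each contains $a$ by reflexivity. The first step is therefore to show that for every $a\in A$ either $[a]\tau$ is a proper subuniverse, or we can still cover $a$ by a proper subuniverse linked to its neighbours.

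Suppose first that every $[a]\tau$ is proper, so each lies in $H(\m a)$. For $(a,b)\in\tau$, the element $b$ lies in both $[a]\tau$ and $[b]\tau$, so these two hyperedges intersect. Since the transitive closure of $\tau$ is $A^2$, any $c,d\in A$ are joined by a chain $c=x_0\,\tau\,x_1\,\tau\cdots\tau\,x_k=d$. The hyperedges $[x_0]\tau,\dots,[x_k]\tau$ then form a path from $c$ to $d$ in $H(\m a)$. Hence $H(\m a)$ is connected.

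The main obstacle is the case where some $[a]\tau=A$, i.e.\ $a$ is $\tau$-related to everything, while $\tau\neq A^2$. Let $U=\{a:[a]\tau=A\}$, which is nonempty. For $u\in U$ and any $b$ we have $(u,b)\in\tau$, so $u\in[b]\tau$. Thus every proper neighbourhood contains all of $U$, and all proper neighbourhoods pairwise intersect. It remains to attach the elements of $U$ to this connected family. If $U\neq A$, pick $b\notin U$; then $[b]\tau$ is proper and contains $U\cup\{b\}$. Every element $x\notin U$ lies in its own proper neighbourhood $[x]\tau\supseteq U$. So all points lie in proper neighbourhoods that share $U$, and $H(\m a)$ is connected.

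Finally, $U=A$ would mean $\tau=A^2$, which is excluded. (Strictly, $U$ is also a subuniverse, being pp-definable as $\{u:\forall b\,(u,b)\in\tau\}$ via compatibility, but this is not needed.) The only care point is a degenerate $|A|=1$, where the statement is vacuous because $\tau=A^2$. I expect the proof to be this simple case analysis on whether neighbourhoods are proper.
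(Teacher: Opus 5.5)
Your proof is correct. It differs from the paper's only in which proper subuniverses you use as hyperedges.

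The paper uses the \emph{tolerance classes}, that is, maximal sets $B$ with $B^2\subseteq\tau$:
\begin{itemize}
\item by maximality and reflexivity, each such class satisfies $B=\bigcap_{b\in B}[b]\tau$, so it is a subuniverse by idempotence;
\item it is automatically proper, because $B^2\subseteq\tau\neq A^2$;
\item every pair $(a,b)\in\tau$ lies in some class, since $\{a,b\}^2\subseteq\tau$.
\end{itemize}
A $\tau$-chain between two points therefore gives a chain of intersecting proper subuniverses directly, with no case distinction.

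You use the neighbourhoods $[a]\tau$ themselves. This avoids the maximality argument, but it forces you to treat separately the ``universal'' elements $U=\{u:[u]\tau=A\}$. Your handling of that case is right: each $[x]\tau$ with $x\notin U$ is proper and contains $U$, and $U\neq\emptyset$ supplies a common point. That case does not even use the connectedness of $\tau$. So the paper's route is slightly more uniform, while yours is slightly more elementary.

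One small inaccuracy: in your parenthetical you call $U$ pp-definable, but $\{u:\forall b\,(u,b)\in\tau\}$ uses a universal quantifier. $U$ is still a subuniverse: for any term $t$ and any $b$, $(t(u_1,\dots,u_n),b)=(t(u_1,\dots,u_n),t(b,\dots,b))\in\tau$ by compatibility and idempotence. As you note, this fact is not needed for the argument.
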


\begin{proof}
Assume $\tau\neq A^2$. We call the maximal subsets $B$ of $A$ which satisfy $B^2\subseteq\tau$ the tolerance classes. Each tolerance class is a proper subuniverse of $\m a$ since $B=\cap\{\tau[b]:b\in B\}$ and $\tau[b]$ is a subuniverse, as we mentioned before this Proposition. Moreover, for any pair $a,b\in A$ such that $(a,b)\in \tau$ there exists at least one tolerance class $B$ of $\tau$ such that $a,b\in B$. Hence, from connectedness of $\tau$ follows the connectedness of $H(\m a)$.
\end{proof}

\begin{thm}\label{connected-linked}
Let $\m a$ and $\m b$ be idempotent algebras and $\m r\leq_{sd}\m a\times\m b$ be such that $tol_1R$ is a connected tolerance. Then
\begin{enumerate}
\item $tol_2R$ is also a connected tolerance.
\item If there exists $b\in B$ such that $R[b]=A$, then $tol_1R=A^2$.
\item If $tol_1R\neq A^2$, then $H(\m a)$ is a connected hypergraph.
\item Assume that $A=\Sg(a_1,a_2)$ for some $a_1,a_2\in A$. In that case, $tol_1R=A^2$ iff there exists some $b\in B$ such that $R[b]=A$.
\end{enumerate}
\end{thm}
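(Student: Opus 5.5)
All four items follow from the definitions together with Proposition~\ref{conntolH}; the only part that takes any work is the chain argument in item (1).

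For item (1), I will show that the transitive closure of $tol_2R$ is $B^2$. Note that $tol_2R$ is already a tolerance by the remark after Definition~\ref{linkdef}. Take $b,b'\in B$. By subdirectness there are $a,a'\in A$ with $(a,b),(a',b')\in R$. Since $tol_1R$ is connected, there is a chain $a=x_0,x_1,\dots,x_k=a'$ with $(x_{i-1},x_i)\in tol_1R$ for each $i$. By the definition of $tol_1R$, each such pair has a common right neighbor $y_i\in B$, that is, $(x_{i-1},y_i),(x_i,y_i)\in R$.

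I then read off a chain in $B$:
\begin{itemize}
\item $b$ and $y_1$ are both $R$-neighbors of $x_0$, so $(b,y_1)\in tol_2R$;
\item $y_i$ and $y_{i+1}$ are both $R$-neighbors of $x_i$, so $(y_i,y_{i+1})\in tol_2R$;
\item $y_k$ and $b'$ are both $R$-neighbors of $x_k=a'$, so $(y_k,b')\in tol_2R$.
\end{itemize}
Hence $(b,b')$ lies in the transitive closure of $tol_2R$, as required. The only care needed is the bookkeeping of the chain, including the degenerate case $k=0$, where $b$ and $b'$ share the neighbor $a=a'$ directly.

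Item (2) does not even use connectedness. If $R[b]=A$, then for any $a,a'\in A$ both pairs $(a,b)$ and $(a',b)$ lie in $R$, so $(a,a')\in tol_1R$ by definition. Thus $tol_1R=A^2$.

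Item (3) is Proposition~\ref{conntolH} applied to $\tau=tol_1R$. This is legitimate because $tol_1R$ is a tolerance: it is reflexive by subdirectness, symmetric by definition, and compatible because $R$ is.

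For item (4), the implication from right to left is item (2). For the other direction, assume $tol_1R=A^2$. Then $(a_1,a_2)\in tol_1R$, so there is $b\in B$ with $(a_1,b),(a_2,b)\in R$, i.e. $a_1,a_2\in R[b]$. As noted before Proposition~\ref{conntolH}, $R[b]$ is a subuniverse of $\m a$ (via a pp-definition and idempotence). Therefore $A=\Sg(a_1,a_2)\subseteq R[b]$, so $R[b]=A$.
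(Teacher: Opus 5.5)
Your proof is correct and follows the paper's argument. Items (2)--(4) are handled exactly as in the paper. Your chain argument for (1) is an explicit unpacking of the paper's observation that connectedness of $tol_1R$ and of $tol_2R$ are both equivalent to connectedness of $R$, viewed as a bipartite graph on the disjoint union of $A$ and $B$.
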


\begin{proof}
To prove (1), note that $tol_1R$ is a connected tolerance iff $R$, viewed as a bipartite graph between the disjoint unions of $A$ and $B$, is connected, iff $tol_2R$ is connected. (2) is obvious. (3) follows from Proposition~\ref{conntolH}. For the direction $\Rightarrow$ of (4), note that from $tol_1 R=A^2$ there must exist some $b\in B$ such that $(a_1,b),(a_2,b)\in R$. But then $A=\Sg(a_1,a_2)=R[b]$. The direction $(\Leftarrow)$ of (4) is statement (2).
\end{proof}

\subsection{CSP} The Constraint Satisfaction Problem (CSP for short) has several equivalent definitions, each offering slightly different language to express the same thing. We will use definitions from \cite{SMB2}, which were based on \cite{barto-collapsewidth}.

\begin{df}\label{constraintlanguage}
By a {\em constraint language} we mean a set $\Gamma$ of relations (of any arities) on the same nonvoid base set $A$.
\end{df}

\begin{df}\label{csp}
Given a constraint language $\Gamma$ on the set $A$, we define an instance of $CSP(\Gamma)$ as any ordered triple $(V,A,\vr c)$ where $V$ is called the set of variables, while $\vr c$ is a set whose elements are called constraints. Each constraint is an ordered pair $(S,R)$, where $S\subseteq V$ is the constraint scope, while $R\subseteq A^S$ is such that there exist an integer $n$ and a surjective mapping $\varphi:n\rightarrow S$ such that $R\circ \varphi\in\Gamma$. Here $R\circ \varphi =\{g\circ \varphi:g\in R\}$. $R$ is the constraint relation of the constraint $(S,R)$. A mapping $f:V\rightarrow A$ is a solution to the instance $(V,A,\vr c)$ of $CSP(\Gamma)$ if for every constraint $(R,S)\in \vr c$, $f|_S\in R$.
\end{df}

Note that the way Feder and Vardi postulated their conjecture implies that we may consider only the case when $\Gamma$ is finite, but we do not make that requirement here, yet. On the other hand, we do assume that the domain $A$ is finite throughout this paper. We will usually assume that the variable set $V$ is $[n]=\{1,2,\dots,n\}$. If we close $\Gamma$ under intersection, and also under permutation and identification of coordinates (if $\Gamma$ was finite, it will remain finite after those closures), then we may assume, without loss of generality, that different constraints have different scopes. This is because we may intersect all constraint relations with the same scope and replace all of those constraints with the single constraint. 

\begin{df}\label{klminimaldef}
Let $P$ be an instance of CSP$(\Gamma)$. We say that $P$ is $(k,l)$-minimal if
\begin{itemize}
\item for any $S\usub V$, $|S|\leq l$, there is precisely one $i$ such that $S_i=S$ ($l$-density) and
\item whenever $(S,R)$ and $(S',R')$ are constraints such that $S'\usub S$ and $|S'|\leq k$, then $R'=R|_{S'}$ ($k$-consistency).
\end{itemize}
\end{df}

By using the $(k,l)$-minimality algorithm (see e.g. \cite{barto-collapsewidth}), for any given fixed numbers $k\leq l$ we can transform, in polynomial time, a given instance of $CSP(\Gamma)$ into an equivalent $(k,l)$-minimal instance. Whenever an instance is at least $(1,1)$-minimal, the second condition implies that each constraint relation $R$ is a subdirect product of $\m a_j$, $j\in S$, where $(\{j\},A_j)$ are constraints in $\vr c$.

According to \cite{J}, the complexity of the $CSP(\Gamma)$ depends on the compatible operations (polymorphisms) of the relational structure $(A,\Gamma)$. Therefore, it suffices to verify the conjecture for $\Gamma=\subp_{fin}(\m a)$ for some finite algebra $\m a$. More precisely (to conform to our definitions) we may assume that $\Gamma=\bigcup\limits_{n=1}^\infty\sub(\m a^n)$ and we will denote such $\Gamma$ by $\Gamma(\m a)$. The impact of this assumption on $\Gamma$ is that now $\Gamma$ contains all full finite powers of $A$ and that $\Gamma$ is closed under intersections and products (along with permutations and identifications of variables assumed earlier), thus $\Gamma$ is a {\em relational clone}.

This justifies our construction where we move from an algebra $\m a$ to one of its term reducts $\m a'$. Namely, any relation compatible with all operations of $\m a$ is compatible with all term operations of $\m a'$, as well. Therefore, any instance of $CSP(\m a)$ is an instance of $CSP(\m a')$, so if we can solve any instance of $CSP(\m a')$ in polynomial time, we can do the same with instances of $CSP(\m a)$.

We may assume that the relational structure $(A,\Gamma)$ has no endomorphisms except for automorphisms. Namely, if $\varphi$ is an endomorphism which maps $A$ onto a proper subset, then the set of instances of $CSP(\Gamma)$ which have a solution is precisely the same as the set of instances of $CSP(\Gamma|_{\varphi(A)})$ on the domain $\varphi(A)$ which has a solution (in the nontrivial direction, just compose the solution of $CSP(\Gamma)$ with the endomorphism). Such relational structures for which all endomorphisms are automorphisms are called {\em cores}.

For $(A,\Gamma)$ a core, the complexity of $CSP(\Gamma)$ equals the complexity of $CSP(\Gamma^c)$, which is $\Gamma$ augmented with all one-element unary relations. If $\Gamma=\Gamma(\m a)$, then $\Gamma^c=\Gamma(\m a^{id})$, where $\m a^{id}$ is the idempotent reduct of $\m a$, i.e., $\m a^{id}$ is an algebra whose operations are all idempotent term operations in $\clo{a}$. So, we justified the focus on idempotent finite algebras and assume from now on that all algebras are such. For more details about the reductions in this and the previous paragraph, cf. Theorems 4.4 and 4.7 of \cite{BJK}.

As was proved in \cite{BJK}, if $\m a$ has no Taylor term (to be defined later) then $CSP(\Gamma(\m a))$ is NP-complete. In \cite{BJK} it is conjectured that in the converse case the $CSP(\Gamma(\m a))$ is in P. A proof of this, the Algebraic Dichotomy Conjecture, in the papers \cite{Zhdich} and \cite{Budich} thus also confirmed the original Dichotomy Conjecture by Feder and Vardi. It matters to us that, if $\m a$ is an algebra with a Taylor term, we can replace $CSP(\m a)$ with $CSP(\m a')$ such that $\m a'$ is a term reduct of $\m a$ which still contains some Taylor term. This is the idea behind minimal Taylor algebras, which we will explore in the next section.

\subsection{Multisorted CSP and templates} Already when we reduce an instance of $CSP(\m a)$ to a $(k,l)$-minimal one, we introduced subuniverses $A_i$ of $\m a$ for $1\leq i\leq n$. Instead of $R\leq A^S$, we can consider $R$ as a subdirect product of $\{\m a_i:i\in S\}$. For some reductions we will also need $A_i$ to be homomorphic images of $\m a$ (or of its subalgebra), or a retract of $\m a$ (or of its subalgebra).

Now we define a {\em template} of CSP.

\begin{df}\label{CSPtemplate}
A class $\vr t$ of isomorphism types of similar finite algebras is called a CSP template if $\vr t$ is closed under homomorphic images and subalgebras. If, additionally, a template is closed under unary polynomial retracts, we call it an M-template. 
\end{df}

Note that we do not allow isomorphic algebras to appear more than once in a template, which is why we speak of isomorphism types, rather than algebras themselves. The reason is, if the language of a finite algebra $\m a$ is finite, then there are only finitely many algebras, up to isomorphism, which can be obtained from $\m a$ by taking subalgebras, homomorphic images and retracts. This will allow us to reduce a multisorted CSP instance by making the template a smaller set of isomorphism types. Now we define the multisorted CSP instance.

\begin{df}\label{multisortedCSP}
Given a template $\vr t$, a (multisorted) instance $P$ of $CSP(\vr t)$ is the triple $(V, D, \vr c)$. Here $D=\{\m a_i:i\in V\}$, where each $\m a_i$ is isomorphic to some algebra in $\vr t$, is the tuple of domains, while $\vr c$ is the set of constraints. Each constraint is an ordered pair $(S,R)$, where $S\subseteq V$, while $R\leq \prod\limits_{i\in S}\m a_i$ is a subdirect product. A mapping $f\in\prod\limits_{i\in V}A_i$ is a solution to the instance $(V,D,\vr c)$ of $CSP(\vr t)$ if for every constraint $(S,R)\in \vr c$, $f|_S\in R$.
\end{df}

Of course, we can take as the template $\vr t(\m a)$, the set of isomorphism types of all finite algebras which can be obtained from a finite algebra $\m a$ by taking homomorphic images, subalgebras and retracts. Then each $(1,1)$-minimal instance of $CSP(\m a)$ is an instance of $CSP(\vr t(\m a))$. Moreover, if $\m a$ has a finite language, then $\vr t(\m a)$ is finite, as we said above.

\subsection{Taylor terms}

A term $t$ of an algebra $\m a$ is a Taylor term if it is idempotent and satisfies a finite set of equations which is not satisfied by any projection operation on a set with more than one element. W. Taylor made an equivalent, but more specific condition for Taylor terms, but we will work with cyclic terms, an even more restrictive condition, which was proved in \cite{bkcyclic} to be equivalent to Taylor terms within the finite algebras (and locally finite varieties of algebras). First we define a cyclic term:

\begin{df}\label{cyclicdef}
A term $t(x_1,\dots,x_n)$ is a cyclic term of an algebra $\m a$ iff the following two identities hold in $\m a$:
\[
\begin{gathered}
t(x,x,\dots,x)\approx x\text{ and}\\
t(x_1,x_2,\dots,x_n)\approx t(x_2,x_3,\dots,x_n,x_1).
\end{gathered}
\]
\end{df}

\begin{thm}[Theorem 4.2 of \cite{bkcyclic}]\label{Taylor=cyclic}
A finite algebra $\m a$ has a Taylor term iff $\m a$ has a cyclic term of any prime arity $p$ such that $p>|A|$.
\end{thm}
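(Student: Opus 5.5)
The implication from right to left is immediate. A cyclic term $t$ of arity $p>|A|\geq 1$ is idempotent and satisfies $t(x_1,\dots,x_p)\approx t(x_2,\dots,x_p,x_1)$. No projection $\pi_i$ on a set with at least two elements satisfies this identity, because it would force $x_i\approx x_{i+1}$ (indices mod $p$). So $t$ is a Taylor term. The whole work lies in the converse, and I would follow the strategy of Barto and Kozik, which recasts the existence of a cyclic term as a fixed-point statement for relations.

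\emph{Reduction to a relational statement.} Let $\m f$ be the $p$-generated free algebra in $\mathsf{HSP}(\m a)$. It is finite, embeds into $\m a^{A^p}$, and carries the automorphism induced by the cyclic shift $\sigma$ of the generators. A $p$-ary term $t$ is cyclic in $\m a$ exactly when $t(x_1,\dots,x_p)$ is a fixed point of $\sigma$ in $\m f$. I would prove the following statement for every finite idempotent Taylor algebra $\m b$, in particular for subalgebras of powers of $\m a$, with every prime $p>|A|$:
\begin{quote}
$(\ast)$ If $\emptyset\neq R\leq\m b^p$ is invariant under cyclic shift of coordinates, then $R$ contains a constant tuple.
\end{quote}
To deduce the theorem, apply $(\ast)$ with $\m b=\m a^{A^p}$ restricted to the subalgebra generated by the projections. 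Equivalently, view $\m f\leq\m a^{A^p}$ and take $R\leq\m f^p$ to be the subuniverse generated by the $p$ cyclic shifts of $(x_1,\dots,x_p)$; this $R$ is shift-invariant. A constant tuple $(t,\dots,t)\in R$ then yields $t=s(x_1,\dots,x_p)=s(x_2,\dots,x_1)=\cdots$ for a single term $s$, and this $s$ is cyclic. One has to be careful here that the bound $p>|A|$ is what is used, and not $p>|F|$. In the Barto--Kozik argument the bound enters only through sizes of subuniverses of $\m a$, so I would prove $(\ast)$ coordinatewise, working with $R\leq\m a_1\times\dots$ where all the $\m a_i$ lie in $\mathsf{HS}(\m a)$.

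\emph{Proof of $(\ast)$ by induction on $|B|$.} Replace $R$ by a minimal nonempty shift-invariant subuniverse and the coordinate algebra by its common projection. The induction has two cases.

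\emph{Case 1: $\m b$ has a proper absorbing subuniverse $C$.} I would show that $R\cap C^p$ is nonempty. The argument uses that $C$ absorbs $\m b$, together with idempotency and the fact that $R$ is invariant under $\mathbb{Z}_p$. Feeding shifted tuples of $R$ into the absorbing term pushes more and more coordinates into $C$. Here $p$ prime is used so that the only shift-invariant coordinate sets are trivial, and $p>|B|$ is used for pigeonhole arguments. Then apply induction to $C$.

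\emph{Case 2: $\m b$ is absorption-free.} I would consider the digraph $E=\{(b_1,b_2):(b_1,\dots,b_p)\in R\}$. Shift-invariance makes $E$ smooth. It also contains a closed walk of length $p$, and since $p$ is prime and larger than $|B|$, this forces the algebraic length of $E$ to be $1$ on a suitable component. The Loop Lemma for Taylor algebras then gives a loop $(b,b)\in E$. Iterating this with the relations $\{(b_1,b_k)\}$, or passing to the subalgebra of elements lying on loops and using absorption-freeness to keep everything subdirect, should produce a constant tuple.

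I expect Case 2 to be the main obstacle. Even granting the Loop Lemma, a loop in the binary projection does not immediately give a constant $p$-tuple. One needs a careful minimality argument, where $R$ is minimal and $\m b$ is absorption-free, to show that the set of $b$ with $(b,\dots,b)\in R$ can be forced to be nonempty. This is where I would rely most heavily on the absorption-theoretic machinery. The first thing I would try is to show that the set of looped vertices is a subuniverse that absorbs $\m b$ unless it equals $B$.
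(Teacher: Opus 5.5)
The paper gives no proof of this statement; it imports it from Barto and Kozik. Your right-to-left direction is correct. For the converse, what you have is a plan in which the two decisive steps are missing and one claimed step is false.

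\textbf{The reduction.} A constant tuple in the shift-closed $R\leq\mathbf F^p$ does yield a cyclic term. However, your induction for $(\ast)$ needs $p>|B|$, which fails for $\mathbf F$, and ``prove $(\ast)$ coordinatewise'' is not an argument. The standard repair needs $(\ast)$ only inside $\mathbf A^p$:
\begin{itemize}
\item Choose a $p$-ary term $t$ for which the set $W(t)$ of tuples $\mathbf a\in A^p$ with $t(\mathbf a)=t(\sigma\mathbf a)=\dots=t(\sigma^{p-1}\mathbf a)$ is maximal.
\item If some $\mathbf a\notin W(t)$, put $\mathbf b=(t(\mathbf a),t(\sigma\mathbf a),\dots,t(\sigma^{p-1}\mathbf a))$.
\item A constant tuple in the shift-invariant $\Sg(\mathbf b,\sigma\mathbf b,\dots,\sigma^{p-1}\mathbf b)\leq\mathbf A^p$ gives a term $s$ that is constant on the $\sigma$-orbit of $\mathbf b$.
\item Then $s(t(\mathbf x),t(\sigma\mathbf x),\dots,t(\sigma^{p-1}\mathbf x))$ has a strictly larger $W$, a contradiction.
\end{itemize}

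\textbf{Case 1 is false as stated.} The real problems are in the proof of $(\ast)$. The claim $R\cap C^p\neq\emptyset$ fails as soon as $C$ is only $n$-absorbing with $n\geq 3$. Take the two-element majority algebra on $\{0,1\}$ and $C=\{0\}$, which is $3$-absorbing via the majority operation. With $p=3>2$, the relation $R=\{(0,1,1),(1,1,0),(1,0,1),(1,1,1)\}$ is a shift-invariant subdirect subuniverse of $\mathbf A^3$ with $R\cap C^3=\emptyset$. Your mechanism uses the absorbing term on shifts, primality, and pigeonhole via $p>|B|$. This example has all of those, and lacks only minimality of $R$, which your sketch never invokes.

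The mechanism does work for binary absorption. There $t(\mathbf r,\sigma^j\mathbf r)$ lies in $C$ on $I\cup(I-j)$, so $I$ can be enlarged until it is all of $\mathbb Z_p$. For $n\geq 3$, a coordinate needs $n-1$ arguments in $C$ simultaneously, and shifting cannot supply that. If $R$ is minimal, the claim amounts to showing that $\mathrm{pr}_1R$ has no proper absorbing subuniverse at all. That is a substantive statement which you do not prove.

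\textbf{Case 2 is unfinished.} The Loop Lemma only yields some $\mathbf r\in R$ with $r_1=r_2$. Restricting to such tuples destroys shift-invariance, so the induction cannot be re-entered. Nothing supports the guess that the looped vertices form an absorbing subuniverse. (In Barto--Kozik the Loop Lemma is derived from the cyclic-term theorem, not used to prove it.)

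As it stands, only the easy implication is established.
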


Subsequently, M. Siggers proved in \cite{siggers} that a finite algebra has a Taylor term iff it has a term with 6 variables which satisfies some specified (finite) set of identities, and K. Kearnes, P. Markovi\'c and R. McKenzie improved this to a four-variable term in \cite{KMM}. We will need just the existence of those two characterizations, so we omit their exact equations.

\begin{df}\label{affinedef}
Let $\m a$ be an algebra. We say that $\m a$ is a {\em set} if every term interprets in $\m a$ as some projection. We say that $\m a$ is {\em affine} if there exist a ring $\m r$ and a left $\m r$-module $\m m$ such that the clones of polynomial operations $\pol{a}$ and $\pol{m}$ are equal.
\end{df}

In the case of idempotent affine algebras, we may assume without loss of generality that the $\m r$-module $\m m$ is faithful, so for every term $t(x_1,\dots x_n)$ there exist elements $r_1,\dots,r_n\in R$ such that
\[
\begin{gathered}
t^{\m a}(x_1,\dots x_n)=r_1x_1+\dots+r_nx_n
\text{ and}\\
r_1+r_2+\dots+r_n=1.
\end{gathered}
\]

The following is a summary of several well-known results in the case of finite idempotent Taylor algebras:

\begin{thm}\label{affineeq}
Let $\m a$ be a finite idempotent Taylor algebra. The following are equivalent:
\begin{enumerate}
\item $\m a$ is affine.
\item There exists a ring $\m r$ and an $\m r$-module $\m m$ with universe $A$ such that the clone of all idempotent operations of $\m m$ and $\clo{a}$ are equal.
\item $\m a$ is quasi-affine.
\item $\m a$ is Abelian.
\item There exists a congruence $\Theta$ of $\m a^2$ such that the diagonal relation $0_A$ is a $\Theta$-class.
\end{enumerate}
\end{thm}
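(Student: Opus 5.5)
We prove the equivalences as a cycle, (1)$\Rightarrow$(2)$\Rightarrow$(3)$\Rightarrow$(4)$\Rightarrow$(5)$\Rightarrow$(1). Where a step is classical we cite it rather than reprove it; the only place the Taylor hypothesis is genuinely needed is the passage from Abelianness back to affineness.

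\emph{From (1) to (3).} For (1)$\Rightarrow$(2), take a faithful module $\m m$ with $\pol a=\pol m$. Since $\m a$ is idempotent, every term operation is an idempotent polynomial of $\m m$. Such a polynomial has the form $\sum r_ix_i + c$, and idempotence forces $\sum r_i=1$ and $c=0$. Conversely, each idempotent operation $\sum r_ix_i$ of $\m m$ (with $\sum r_i=1$) is a polynomial of $\m a$. Write it as $p(x_1,\dots,x_n)=t(x_1,\dots,x_n,c_1,\dots,c_k)$. Substituting $x_i=y$ and using $p(y,\dots,y)=y$ shows that the constants can be replaced by the variable $y$, so $p$ is a term operation of $\m a$. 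This gives $\clo a$ equal to the idempotent clone of $\m m$. For (2)$\Rightarrow$(3), note that $\m a$ is then a reduct of a module, i.e.\ a subreduct of a module, which is quasi-affine.

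\emph{From (3) to (5).} For (3)$\Rightarrow$(4), quasi-affine algebras are Abelian, by the term condition inherited from modules. For (4)$\Rightarrow$(5), this is the standard characterization of Abelianness: $\m a$ is Abelian iff the diagonal $0_A$ is a class of the congruence of $\m a^2$ generated by $\{((a,a),(b,b)):a,b\in A\}$. This is just a restatement of the term condition $C(1,1;0)$.

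\emph{From (5) to (1).} This is the substantial direction, (5)$\Rightarrow$(4)$\Rightarrow$(1). First, (5)$\Rightarrow$(4): if $0_A$ is a class of some $\Theta$, take the congruence $\Delta$ generated by the pairs $((a,a),(b,b))$. Since all of $0_A$ lies in a single $\Theta$-class, $\Delta\le\Theta$, so $0_A$ is also a $\Delta$-class and $\m a$ is Abelian. Now let $\m a$ be Abelian with a Taylor term. By Theorem~\ref{Taylor=cyclic} it has a cyclic term, and Abelian algebras with a Taylor term generate congruence-permutable (indeed affine) varieties. I would cite Hobby--McKenzie here: a locally finite Abelian variety omitting type 1 is affine. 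Alternatively, one can derive a Maltsev term directly from the cyclic term on an Abelian algebra, via the term condition in $\m a^2$. Once $\m a$ is Abelian with a Maltsev term, Gumm/Herrmann's fundamental theorem gives that $\m a$ is polynomially equivalent to a module, which is (1).

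I expect this last step, from Abelian and Taylor to affine, to be the main obstacle. My first choice is to cite the tame congruence theory result that a finite Abelian Taylor algebra generates a variety that omits type 1 and is Abelian, and hence is affine. Everything else is routine.
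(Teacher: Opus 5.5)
The paper gives no proof of this theorem. It records it as a summary of well-known results, so your strategy of chaining standard facts and citing the hard direction matches the paper, and your routine implications are fine, including the direct argument for (5)$\Rightarrow$(4). One small repair is needed in (1)$\Rightarrow$(2): setting $x_i=y$ in $p$ does not by itself let you replace the constants. You need the linear form $t(\bar x,\bar z)=\sum_i r_ix_i+\sum_j s_jz_j$. Idempotence of $t$ and of $p$ gives $\sum_j s_jc_j=0$ and $\sum_j s_j$ acting as $0$, hence $t(\bar x,y,\dots,y)=p(\bar x)$.

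The real weak point is your first-choice citation for (4)$\Rightarrow$(1). A result about locally finite \emph{Abelian varieties} omitting type 1 requires $\mathsf{HSP}(\m a)$ to be Abelian, but you only know that $\m a$ is. Abelianness is preserved by $\mathsf{S}$ and $\mathsf{P}$ but not by $\mathsf{H}$. In the Taylor case, Abelianness of the generated variety is a consequence of $\m a$ being affine, not something you can assume. Your ``alternative'' is the right route, and it takes only a few lines:
\begin{itemize}
\item Let $c(x_1,\dots,x_p)$ be a cyclic term (Theorem~\ref{Taylor=cyclic}) and put $u_y(x):=c(x,y,\dots,y)$.
\item Suppose $u_y(a)=u_y(b)$. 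The term condition $C(1,1;0)$ lets you replace the block $(y,\dots,y)$ in positions $2,\dots,p$ by any $\bar w$, so $c(a,\bar w)=c(b,\bar w)$ for all $\bar w$.
\item By cyclicity, $a$ can then be swapped for $b$ in any single position without changing the value. Hence $a=c(a,\dots,a)=c(b,\dots,b)=b$, so every $u_y$ is a permutation and $u_y^{k}=\mathrm{id}$ for $k=|A|!$.
\item Define the term $m(x,y,z):=u_y^{k-1}(c(x,z,y,\dots,y))$, obtained by iterating $c(\,\cdot\,,y,\dots,y)$. Then $m(x,y,y)=u_y^{k}(x)=x$.
\item By cyclicity $c(y,z,y,\dots,y)=c(z,y,\dots,y)=u_y(z)$, so $m(y,y,z)=z$.
\end{itemize}
Thus $\m a$ is Abelian with a Maltsev term. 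Herrmann's theorem (Abelian algebras in congruence modular varieties are affine) then gives (1) without any tame congruence theory.
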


We give another condition which forces an algebra to be affine:

\begin{thm}\label{R3affine}
Let $\m a$ be a finite idempotent Taylor algebra and $R\leq \m a^3$. For every $a\in A$ define the relations $R_{a,1}:=\{(x,y)\in A^2:R(a,x,y)\}$, $R_{a,2}:=\{(x,y)\in A^2:R(x,a,y)\}$ and $R_{a,3}:=\{(x,y)\in A^2:R(x,y,a)\}$. If for every $a\in A$ all three relations $R_{a,1}$, $R_{a,2}$ and $R_{a,3}$ are graphs of bijections of the set $A$, then $\m a$ is affine.
\end{thm}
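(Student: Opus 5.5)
The plan is to use condition (5) of Theorem~\ref{affineeq}: it suffices to produce a congruence $\Theta$ of $\m a^2$ of which the diagonal $0_A$ is a single class. Idempotence and the Taylor assumption are needed only to invoke Theorem~\ref{affineeq}. The relation $R$ behaves like the graph of a quasigroup operation $x\cdot y=z$. In the model case of an abelian group with $R=\{(a,b,c):a+b=c\}$, the natural congruence is ``$x-y=u-v$''. I will write this down by a primitive positive formula that uses only $R$. Concretely, let $\rho\subseteq A^2\times A^2$ consist of all pairs $((x,y),(u,v))$ for which there exist $a,b,c,d\in A$ with
\[
R(a,x,c)\wedge R(b,y,c)\wedge R(a,u,d)\wedge R(b,v,d).
\]
In the group case this says $b-a=x-y=u-v$. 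Since $\rho$ is pp-defined from $R\le\m a^3$, it is a subuniverse of $(\m a^2)^2$, i.e.\ a compatible binary relation on $\m a^2$.

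Next I check that $\rho$ is reflexive and symmetric. Symmetry is immediate by swapping the roles of $c$ and $d$. For reflexivity, given $(x,y)$, pick any $a$. Since $R_{a,1}$ is a bijection there is $c$ with $R(a,x,c)$. Since $R_{c,3}$ is a bijection there is $b$ with $R(b,y,c)$. Taking $d=c$, $u=x$, $v=y$ then witnesses $((x,y),(x,y))\in\rho$. So $\rho$ is a tolerance of $\m a^2$, and its transitive closure $\Theta$ is a congruence of $\m a^2$.

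The main step is to show that the diagonal is a $\Theta$-class. Proving transitivity of $\rho$ directly would require an associativity-type property of the quasigroup, which we do not have, so I will avoid it.
\begin{itemize}
\item First, $0_A$ is closed under $\rho$. Suppose $((x,x),(u,v))\in\rho$ with witnesses $a,b,c,d$. From $R(a,x,c)$ and $R(b,x,c)$, bijectivity of $R_{x,2}$ gives $a=b$. Then $R(a,u,d)$ and $R(a,v,d)$, together with bijectivity of $R_{a,1}$, give $u=v$.
\item Second, the whole diagonal lies in one $\rho$-class. Given $x,u$, take any $a$ and set $b=a$. Let $c$ be the unique element with $R(a,x,c)$ and $d$ the unique element with $R(a,u,d)$. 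These witness $((x,x),(u,u))\in\rho$.
\item Finally, by induction along a $\rho$-chain, any element $\Theta$-related to a diagonal pair is itself diagonal.
\end{itemize}
Hence $0_A$ is exactly one $\Theta$-class, and Theorem~\ref{affineeq} yields that $\m a$ is affine.
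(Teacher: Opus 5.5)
Your proof is correct, and it reaches condition (5) of Theorem~\ref{affineeq} by a different construction than the paper.

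The paper first observes that $R$ is the graph of a quasigroup operation $r$. It then takes $S$, pp-defined by $(\exists z)\,R(x_1,y_1,z)\wedge R(x_2,y_2,z)$. This is the kernel of $r\colon A^2\to A$, so it is an equivalence relation outright and no transitive closure is needed. However, its classes are the fibres of $r$, and none of these need be the diagonal. The paper therefore invokes idempotence directly: $\{a\}$ is a subuniverse, so the bijection $\varphi$ given by $R_{a,3}$ is pp-definable and is an automorphism. Transporting $S$ by $\varphi$ in one coordinate then moves the fibre over $a$ onto $0_A$.

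You instead build the diagonal-class property into the formula itself. Your formula asks for a common ``translation'' $(a,b)$ relating $(x,y)$ to $(u,v)$, which in the group case says $x-y=u-v$. The price is that you only get a tolerance. That price is harmless, because it suffices to know that $0_A$ is closed under $\rho$-steps and lies in a single $\rho$-class. You check the first via injectivity of $R_{x,2}$ and then $R_{a,1}$, and the second directly.

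What your route buys is that it needs neither the quasigroup reformulation nor any automorphism or constants. Idempotence enters only through Theorem~\ref{affineeq}. It also avoids the bookkeeping of exactly how to twist by $\varphi$ so that the chosen fibre lands on the diagonal. The paper's route gets an equivalence relation for free and has a shorter verification once the twist is set up.
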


\begin{proof}
Another way to look at $R$ is to note that, for any $a,b\in A$ there exists a unique $c$ in $A$ such that $R(a,b,c)$, so $R$ is the graph of an operation from $A^2$ into $A$. Moreover, as there exist unique $d\in A$ such that $R(d,b,a)$ and unique $e\in A$ such that $R(b,e,a)$, thus $R$ is the graph of a quasigroup operation $r(x,y)$ on the set $A$, and such that $r$ commutes with all operations of $\m a$.

Define the relation $S(x_1,y_1,x_2,y_2)$ of $A^4$ by $(\exists z)R(x_1,y_1,z)\mt R(x_2,y_2,z)$. $S$ is compatible relation with $\m a$ (we have its pp-definition from $R$) and it is the kernel of the quasigroup operation $r:A^2\rightarrow A$. From all we know about $R$, it follows that $S$, viewed as a binary relation between its first two coordinates and its last two coordinates, is an equivalence relation on the set $A^2$, i.e., it is a congruence on $\m a^2$.

Fix some $a\in A$. Since $\m a$ is idempotent, $\{a\}$ is a subuniverse of $\m a$ and the bijection whose graph is $R_{a,3}(x,y)$ is pp-definable from compatible relations $\{a\}$ and $R$, so the bijection $R_{a,3}$ is an automorphism of $\m a$, denote it $\varphi$. So, if we define $S'(x_1,y_1,x_2,y_2):=S(\varphi(x_1),y_1,\varphi(x_2),y_2)$, it is another congruence of the algebra $\m a^2$. Moreover, one of its classes (the one where the quantified $z$ in the definition of $S$ is chosen to be $a$) is 
$$\{(\varphi(x_1),\varphi(x_1),\varphi(x_2),\varphi(x_2)):x_1,x_2\in A\}.$$ 
Since $\varphi\in \aut a$, it follows that the diagonal relation $0_A$ is a congruence class of the congruence on $\m a^2$ defined by $S'$. According to Theorem~\ref{affineeq} $(5)\Rightarrow(1)$, $\m a$ is affine. 
\end{proof}

A key concept to L. Barto's and M. Kozik's approach to Taylor algebras and the study of the Constraint Satisfaction problem is absorption, defined below.

\begin{df}[Definition 3.3 of \cite{dreamteam}]\label{absorbdef}
Let $\m a$ be an algebra and $B \subseteq A$. We call $B$ an $n$-\emph{absorbing set} of $A$ if there is a term operation $t \in \clon{n}{a}$ such that $t(\mathbf{a}) \in B$ whenever $\mathbf{a} \in A^n$ and $|\{i:\mathbf{a}(i) \in B\}| \geq n-1$.

If, additionally, $B$ is a subuniverse of $\m a$, we write $B\lhd_n \m a$, or $B\lhd \m a$ when the arity is not important.
\end{df}

A related notion to absorption is projectivity, defined below. Its variant was first defined as {\em cube term blockers} in \cite{MMM}.

\begin{df}[Definitions 2.2 and 5.6 of \cite{dreamteam}]
Let $\m a$ be an algebra and $B\subseteq A$. We say that $B$ is a {\em projective subuniverse} if for every $f\in\clon{n}{a}$ there exists a coordinate $i$ of $f$ such that $f(\mathbf{a})\in B$ whenever $\mathbf{a}\in A^n$ is such that $\mathbf{a}(i)\in B$. The set $B$ is a {\em strongly projective subuniverse} if for every $f\in\clon{n}{a}$ and every essential coordinate $i$ of $f$, we have $f(\mathbf{a})\in B$ whenever $\mathbf{a}\in A^n$ is such that $\mathbf{a}(i)\in B$. We say that $a\in A$ is an {\em absorbing element} if $\{a\}$ is a strongly projective subuniverse of $\m a$.
\end{df}

Some easy observations are that projective subuniverses must, indeed, be subuniverses, and that in every nontrivial clone any binary operation which is not a projection witnesses the fact that a strongly projective subuniverse is 2-absorbing.

\section{Minimal Taylor algebras}

By using either \cite{siggers}, \cite{KMM}, or even \cite{bkcyclic}, we can find, in each finite Taylor algebra $\m a$, a Taylor term which has either a fixed arity, or its arity (if we are applying \cite{bkcyclic}) depends only on the size $|A|$ of the universe. Therefore, any finite Taylor algebra must have as its term one of finitely many Taylor terms specified in the mentioned papers. Another way to restate the same fact is:\\
\smallskip

{\em On any finite set, there exist finitely many Taylor clones such that any Taylor clone contains one of them.}\\
\smallskip

Hence the definition of minimal Taylor algebras states:

\begin{df}\label{minTaylordef}
A finite algebra $\m a$ is a minimal Taylor algebra if $\m a$ has a Taylor term, and moreover, each Taylor term operation of the algebra $\m a$ generates the clone $\mathrm{Clo}{\m a}$ of all term operations of $\m a$.
\end{df}

By the remarks above Definition~\ref{minTaylordef}, we have

\begin{prp}[Proposition 5.2 of \cite{dreamteam}]
Every finite Taylor algebra has a term reduct which is a minimal Taylor algebra.
\end{prp}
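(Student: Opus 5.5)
The idea is a descent argument: start from the clone of $\m a$ and keep passing to smaller Taylor clones until no further step is possible. Finiteness of the set of candidate Taylor operations, noted above Definition~\ref{minTaylordef}, guarantees that this stops.

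First fix the finite list of candidates. By Theorem~\ref{Taylor=cyclic}, or by the Siggers or Kearnes--Markovi\'c--McKenzie characterization, every finite Taylor algebra on the set $A$ has a Taylor term of one fixed arity $k$ (or of arity depending only on $|A|$) that satisfies one fixed finite set of identities. On the finite set $A$ there are only finitely many $k$-ary operations. Let $T_1,\dots,T_m$ be those satisfying the identities. Every Taylor clone on $A$ contains some $T_i$, so every Taylor clone on $A$ contains one of the finitely many clones $\mathrm{Clo}(T_i)$.

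Next take a minimal clone among the candidates. Let $\mathcal C$ be the set of clones $\mathrm{Clo}(T_i)$ contained in $\clo{a}$. It is nonempty because $\m a$ is Taylor, and it is finite. Choose $\mathrm{Clo}(T_j)\in\mathcal C$ minimal under inclusion, and let $\m a'=(A;T_j)$. This is a term reduct of $\m a$, and it is Taylor because $T_j$ satisfies the Taylor identities; idempotence is inherited.

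The remaining step is minimality, and it is the only point needing care. Let $s$ be any Taylor term operation of $\m a'$. Then $\mathrm{Clo}(s)\subseteq\clo{a'}$ is a Taylor clone on $A$, so it contains some $T_i$. Hence $\mathrm{Clo}(T_i)\subseteq\mathrm{Clo}(s)\subseteq\mathrm{Clo}(T_j)\subseteq\clo{a}$, which puts $\mathrm{Clo}(T_i)$ in $\mathcal C$. Minimality of $\mathrm{Clo}(T_j)$ in $\mathcal C$ forces $\mathrm{Clo}(T_i)=\mathrm{Clo}(T_j)$, so $\mathrm{Clo}(s)=\clo{a'}$. Thus every Taylor term of $\m a'$ generates its whole clone, and $\m a'$ is minimal Taylor by Definition~\ref{minTaylordef}.

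The subtlety is that the characterizing identities must be in a fixed finite form, so that "Taylor clone" really means "contains one of $T_1,\dots,T_m$". Uniformity of arity on a fixed finite set is exactly what the cited theorems supply.
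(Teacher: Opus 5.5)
Your proposal is correct and follows the paper's own reasoning. The paper gives no separate proof: it derives the proposition directly from the preceding remark that on a finite set there are only finitely many clones generated by Siggers or cyclic operations, and every Taylor clone contains one of them. You simply make explicit the choice of an inclusion-minimal such clone inside the clone of $\m a$, and the check that every Taylor operation of the resulting reduct generates that whole clone.
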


The next two results of \cite{dreamteam} are given without proof. They are easy to prove, but fundamental results for anyone trying to apply minimal Taylor algebras.

\begin{thm}[Proposition 5.3 of \cite{dreamteam}]\label{subuniversesminTaylor}
Let $\m a$ be a minimal Taylor algebra, $t$ a term and $B\subseteq A$. If $B$ is closed under $t$ and if $B$, together with the restriction of $t^{\m a}$ to $B$ forms a Taylor algebra, then $B$ is a subuniverse of $\m a$.
\end{thm}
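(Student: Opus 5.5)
The plan is to build, from $t$ restricted to $B$, a term operation of $\m a$ that is Taylor on $A$ and whose range lies in $B$. Minimality then gives $B$ as a subuniverse: every term operation of $\m a$ is a composition of that Taylor term, and a composition of operations whose values land in $B$ again lands in $B$.

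First I would pass to a good Taylor term of $\m b:=(B,t^{\m a}|_B)$. By Theorem~\ref{Taylor=cyclic}, $\m b$ has a cyclic term $s$ of some prime arity $p$ with $p>|A|\geq|B|$. This $s$ is a term built from the single symbol $t$, so it also interprets in $\m a$ as a term operation $s^{\m a}$. This operation need not be cyclic on $A$. I would still fix $p$ large enough that $\m a$ itself has a cyclic term $c$ of arity $p$, again by Theorem~\ref{Taylor=cyclic}.

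Next I would combine the two terms. The natural candidate is
\[
u(x_1,\dots,x_p):=s\bigl(c(x_1,\dots,x_p),c(x_2,\dots,x_p,x_1),\dots,c(x_p,x_1,\dots,x_{p-1})\bigr).
\]
Since $c$ is cyclic on $A$, all arguments of $s$ are equal, so $u$ equals $c$ on $A$, which is useless. Instead I would use the fixed-range trick. Consider the set of unary term operations $e(x)=f(x,\dots,x)$ and the terms built from $t$. The key point is that $t$ is idempotent on $B$, since Taylor terms are idempotent by definition. So I would look for a term operation $r$ of $\m a$ with $r(A^n)\subseteq B$ and $r$ Taylor.

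A cleaner route uses that $t^{\m a}$ is itself an operation of $\m a$; here the minimality hypothesis applies to clones containing it. Consider the clone $\mathcal C$ of all term operations $f$ of $\m a$ such that $B$ is closed under $f$. This is a subclone of $\clo{a}$ containing $t^{\m a}$, and it is nontrivial. If $\mathcal C$ contains a Taylor operation on $A$, minimality gives $\mathcal C=\clo{a}$, and hence $B$ is a subuniverse. So the whole task reduces to producing a Taylor term of $\m a$ that preserves $B$.

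To obtain it, I would use that the Taylor identities of $\m b$ hold for $s$ on $B$, and that $\m a$ has a cyclic term $c$ of the same prime arity $p$. By a standard iteration (the Barto--Kozik loop argument), I would form the operations
\[
s_k:=s\circ(\text{cyclic shifts of } s_{k-1})
\]
mixed with $c$, so that the failure of cyclicity on $A$ shrinks while $B$ stays closed. Since $A$ is finite, some composite $w\in\mathcal C$ would satisfy $w(x_1,\dots,x_p)\approx w(x_2,\dots,x_p,x_1)$ on $A$, which makes it a Taylor term in $\mathcal C$.

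The main obstacle is this last step: making a term that is cyclic on all of $A$ while preserving $B$. My first attempt would be to apply the proof of Theorem~\ref{Taylor=cyclic} directly to the relational structure $(A,B)$. Its polymorphism clone $\mathcal C$ is Taylor iff it has no homomorphism to projections. If instead $\mathcal C$ mapped to projections, the restriction map $\mathcal C\to\mathrm{Clo}(\m b)$ would be onto $\mathrm{Clo}(\m b)$, since $\mathrm{Clo}(\m b)$ is generated by $t$, which lies in $\mathcal C$. I would then have to show that this forces $\m b$ to be non-Taylor, a contradiction. I expect this compatibility argument, relating a clone homomorphism of $\mathcal C$ to one of $\mathrm{Clo}(\m b)$, to be the delicate part.
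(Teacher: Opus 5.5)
Your reduction is correct. It suffices to find a Taylor term operation of $\m a$ that preserves $B$, because by minimality it generates $\clo{a}$. Asking for a Taylor $r$ with $r(A^n)\subseteq B$, as in your opening paragraphs, is impossible for $B\neq A$: term operations of $\m a$ are idempotent, hence onto $A$. The right requirement is ``preserves $B$,'' which your clone $\mathcal C$ formulation states.

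The real gap is that you never produce such an operation. The step you call the main obstacle is left open, and neither sketched route closes it. The iteration that ``shrinks the failure of cyclicity'' is not an argument. The clone-homomorphism route fails as stated, for three reasons:
\begin{itemize}
\item A clone homomorphism $\xi$ from $\mathcal C$ to a projection clone need not factor through restriction to $B$, since two members of $\mathcal C$ agreeing on $B$ may have different images. So $\xi$ induces nothing on the clone of $(B;t|_B)$.
\item Restriction does not even map $\mathcal C$ into the clone generated by $t|_B$; its image may be larger.
\item The only operation you have with nontrivial identities on all of $A$ is $c$, which need not lie in $\mathcal C$. Meanwhile $s\in\mathcal C$ satisfies its identities only on $B$. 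So $\xi$ has nothing to act on that yields a contradiction.
\end{itemize}

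The missing idea is the composition you dismissed, with the roles of $s$ and $c$ swapped. With $p$, $s$ (built from $t$, cyclic on $B$) and $c$ (cyclic on $A$) chosen as you did, put
\[
u(x_1,\dots,x_p):=c\bigl(s(x_1,\dots,x_p),s(x_2,\dots,x_p,x_1),\dots,s(x_p,x_1,\dots,x_{p-1})\bigr).
\]
A cyclic shift of the $x_i$ cyclically permutes the $p$ arguments of $c$, so $u$ is cyclic on all of $A$. It is idempotent because $\m a$ is, so $u$ is a Taylor term operation of $\m a$. If all $x_i\in B$, cyclicity of $s$ on $B$ makes all $p$ inner values equal to $s(x_1,\dots,x_p)\in B$. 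Idempotency of $c$ then gives $u(x_1,\dots,x_p)=s(x_1,\dots,x_p)\in B$. Thus $u\in\mathcal C$, minimality gives $\mathcal C=\clo{a}$, and $B$ is a subuniverse. Nothing beyond Theorem~\ref{Taylor=cyclic} is needed.
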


\begin{thm}[Proposition 5.4 of \cite{dreamteam}]\label{minTvariety}
Let $\m a$ be a minimal Taylor algebra and $\vr v=\mathsf{HSP}_{fin}(\m a)$ the pseudovariety generated by $\m a$. Then every $\m b\in\vr v$ is also a minimal Taylor algebra.
\end{thm}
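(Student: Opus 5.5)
We must prove: if $\m a$ is minimal Taylor and $\m b\in\mathsf{HSP}_{fin}(\m a)$, then $\m b$ is minimal Taylor. The plan is to reduce the statement to a single identity-transfer argument, using Theorem~\ref{Taylor=cyclic} and the definition of minimal Taylor algebras.

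Two preliminary observations. First, $\m b$ is Taylor, since Taylor identities are preserved by $\mathsf H$, $\mathsf S$ and $\mathsf P$. Second, every term operation of $\m b$ is the interpretation $s^{\m b}$ of some term $s$ of the common signature. So it suffices to show the following: if $t$ is a term such that $t^{\m b}$ is a Taylor operation of $\m b$, then every term $s$ satisfies $s^{\m b}\in\langle t^{\m b}\rangle$.

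The key step is to lift the Taylor identities from $\m b$ to $\m a$. Let $N=|B|$ and fix a prime $p>\max(|A|,N)$. Since $\langle t^{\m b}\rangle$ is a Taylor clone on $B$, Theorem~\ref{Taylor=cyclic} gives a term $c$ built from $t$ alone whose interpretation $c^{\m b}$ is cyclic of arity $p$. In general $c^{\m a}$ need not be cyclic, so we instead pick an idempotent $p$-ary term $c'$ in $t$ such that $c'^{\m a}$ is cyclic; such a term would certify the claim $t^{\m a}$ generates a Taylor clone. This is where the main obstacle lies, because a Taylor operation of a quotient need not lift to a Taylor operation of $\m a$.

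To get around this, I would use the free algebra. Let $\m f=\m f_{\vr v}(2)$, the free algebra on two generators $x,y$, which is finite and belongs to $\vr v$. Since $\m b\in\mathsf{HSP}_{fin}(\m a)$, there are $\m c\le\m a^m$ and a surjection $\m c\to\m b$. I would first handle $\mathsf S$ and $\mathsf P$ separately, using that minimal Taylorness is clearly preserved under these operations. The cleaner route is the characterization of Theorem~\ref{subuniversesminTaylor}: a subset closed under $t$ that forms a Taylor algebra is a subuniverse.

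Apply this to $\m f$ viewed inside $\m a^{A^2}$ (so $\m f\le\m a^{A^2}$, and $\m a^{A^2}$ is minimal Taylor as a power of $\m a$, which I would verify directly from the definition). Consider $S=\mathrm{Sg}_t(x,y)$, the closure of $\{x,y\}$ under $t$ alone. If $t$ satisfies Taylor identities in $\m b$, I want $(S,t)$ to be Taylor. Granting that, Theorem~\ref{subuniversesminTaylor} makes $S$ a subuniverse, hence $S=F$. Then every binary term, and similarly every $n$-ary term via $\m f_{\vr v}(n)$, is generated by $t$ in $\vr v$, and in particular in $\m b$.

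The hardest point is making $(S,t)$ Taylor knowing only that $t^{\m b}$ is Taylor. I would try to avoid needing it on $\m f$ by working with $\m f_{\mathsf{HSP}(\m b)}(n)$ instead. That algebra lies in $\vr v$, and I would apply Theorem~\ref{subuniversesminTaylor} relative to $\m a$ through a realization $\m f_{\mathsf{HSP}(\m b)}(n)\le \m b^{B^n}$, pulled back along $\m c\to \m b$.
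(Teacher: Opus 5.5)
\paragraph{A genuine gap.} You correctly identify the obstacle, but none of your routes can get past it. Each of them needs $t$ to be Taylor on something that maps onto more than $\m b$:
\begin{itemize}
\item picking a $t$-term $c'$ with $c'^{\m a}$ cyclic;
\item making $(S,t)$ Taylor in $\m f_{\vr v}(2)$;
\item pulling the $t$-closure of the generators of $\m b^{B^n}$ back along $\m c\to\m b$, so as to apply Theorem~\ref{subuniversesminTaylor} inside $\m a^{mB^n}$.
\end{itemize}
This is false in general, even for minimal Taylor $\m a$.

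Here is an example. Suppose $\vr v$ contains a two-element majority algebra $\m b$ and a nontrivial affine algebra. This happens for a minimal Taylor term reduct of the product of the two-element majority algebra and the two-element affine algebra. Then the term $g$ of Corollary~\ref{majminexist} is the majority operation on $\m b$, but it is the first projection on every two-generated affine algebra in $\vr v$. Consequently:
\begin{itemize}
\item $g^{\m b}$ is Taylor;
\item no $g$-term can be cyclic (or Taylor) on $\m a$, since that identity would pass to the affine algebra, where every $g$-term is a projection;
\item the $g$-closure $S$ of $\{x,y\}$ in $\m f_{\vr v}(2)$ maps onto a two-element algebra in which $g$ is a projection, so $(S,g)$ is not Taylor.
\end{itemize}

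Two further problems. Your remark that minimal Taylorness is ``clearly'' preserved by $\mathsf S$ is unjustified: for $\m b\le\m a$ you face exactly the same lifting problem. Only finite powers are immediate, since $\m a^m$ and $\m a$ satisfy the same identities. And applying Theorem~\ref{subuniversesminTaylor} inside $\m b^{B^n}$ would presuppose that $\m b$ is minimal Taylor.

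\paragraph{The missing idea.} The paper quotes this result without proof. The standard short argument does not lift $t$ at all; it twists $t$ with a cyclic term of $\m a$.
\begin{enumerate}
\item Let $t'$ be a $t$-term with $t'^{\m b}$ cyclic of prime arity $p>\max(|A|,|B|)$; you already have this.
\item Let $c$ be a cyclic term of $\m a$ of arity $p$, and put
\[
s(x_1,\dots,x_p):=c\bigl(t'(x_1,\dots,x_p),\,t'(x_2,\dots,x_p,x_1),\,\dots,\,t'(x_p,x_1,\dots,x_{p-1})\bigr).
\]
\item A cyclic shift of the arguments of $s$ cyclically shifts the arguments of $c$. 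Hence $s^{\m a}$ is an idempotent cyclic operation, so it is a Taylor term operation of $\m a$.
\item Since $t'^{\m b}$ is cyclic and $c$ is idempotent, $\m b\models s\approx t'$.
\item By minimality of $\m a$, every basic operation $f$ satisfies $\m a\models f\approx u_f$ for some term $u_f$ built from $s$ alone.
\item This identity also holds in $\m b$. Replacing $s$ by $t'$ gives $f^{\m b}\in\langle t^{\m b}\rangle$.
\end{enumerate}
Finally, $\m b$ is Taylor because $c$ is cyclic in $\m b$. This handles $\mathsf H$, $\mathsf S$ and $\mathsf P$ at once, and needs neither free algebras nor Theorem~\ref{subuniversesminTaylor}.
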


Next we recall some definitions and results on absorption in minimal Taylor algebras. The first theorem is a little more difficult to prove than Theorems \ref{subuniversesminTaylor} and \ref{minTvariety}, but still not too bad, and we also cite it without proof.

\begin{thm}[Theorem 5.5 of \cite{dreamteam}]\label{abssetsubuniv}
Let $\m a$ be a minimal Taylor algebra and $B$ an absorbing set of $\m a$. Then $B$ is a subuniverse of $\m a$.
\end{thm}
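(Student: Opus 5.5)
The idea is to use minimality through Theorem~\ref{subuniversesminTaylor}: we show that the absorbing term itself, restricted to $B$, already yields a Taylor algebra on $B$. Let $t\in\clon{n}{a}$ witness that $B$ is $n$-absorbing. Since $t(\mathbf{a})\in B$ whenever all entries of $\mathbf{a}$ lie in $B$ (all $n\geq n-1$ of them), $B$ is closed under $t$. It is not yet known that $B$ is closed under the other term operations, so the obstacle is exactly to produce a Taylor term built from $t$ alone; Theorem~\ref{subuniversesminTaylor} then finishes.

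Fix a Taylor term of $\m a$; by Theorem~\ref{Taylor=cyclic} we may take a cyclic term $c(x_1,\dots,x_p)$ of prime arity $p>|A|$. Define the star composition
\[
s(x_{1,1},\dots,x_{n,p}) := t\bigl(c(x_{1,1},\dots,x_{1,p}),\dots,c(x_{n,1},\dots,x_{n,p})\bigr),
\]
or, more usefully, the composition with $c$ on the outside and $t$ inside, arranged so that the result is both closed on $B$ and Taylor. The cleaner route is the following. Consider the term
\[
u(x_1,\dots,x_n):=t\bigl(c(x_1,x_2,\dots),\,c(x_2,x_3,\dots),\dots\bigr),
\]
where each argument of $t$ is $c$ applied to a cyclic shift of variables. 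Plain compositions like this do not keep $B$ closed unless all but one argument of $t$ lands in $B$. So instead I use absorption in its natural form: for $b_1,\dots,b_n\in B$ and arbitrary $a$, the value $t(b_1,\dots,a,\dots,b_n)$ lies in $B$. The key construction is
\[
f(x_1,\dots,x_n):=c\bigl(t(x_1,\dots,x_n),\,t(x_2,\dots,x_n,x_1),\dots\bigr)
\]
generalized to \emph{every} term operation. Since $t$ generates $\clo a$ (by minimality, \emph{if} $t$ is Taylor), any failure must come from $t$ being non-Taylor.

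Hence the real dichotomy is the following. Either $t|_B$ generates a Taylor clone on $B$, and Theorem~\ref{subuniversesminTaylor} applies directly; or it does not. In the second case I will show that the operation $t'(x_1,\dots,x_n,y_1,\dots,y_p):=t(c(x_1,y_1,\dots),\dots)$ is a Taylor term of $\m a$ that preserves $B$. It is Taylor because it is a composition containing the cyclic $c$ essentially, so it satisfies cyclic-type identities after identifying variables. It preserves $B$ because when all inputs lie in $B$, each inner $c$-value is just some element of $A$, but the absorption of $t$ only tolerates one outlier. To handle this I will instead take $c'$ to be a cyclic term of $\m a$ obtained from the cyclic composition $c'(x_1,\dots,x_p)=t(c(\dots),\dots)$ iterated. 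By idempotency and the absorption of $B$, one shows by induction on the number of non-$B$ arguments that $B$ is closed under $c'$. The main obstacle, which I expect to require care, is choosing the composition so that the resulting term is simultaneously Taylor, in which case it generates $\clo a$ by minimality, and $B$-preserving. Once such a Taylor term preserving $B$ exists, its restriction to $B$ is Taylor, since the Taylor identities persist on subsets, and Theorem~\ref{subuniversesminTaylor} gives that $B$ is a subuniverse.
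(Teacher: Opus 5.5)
Your reduction is sound: $t$ preserves $B$, so it suffices to find a term operation $s$ of $\m a$ with $s(B^m)\subseteq B$ that is Taylor (on $A$, or merely on $B$ by Theorem~\ref{subuniversesminTaylor}). But you never produce such an $s$. The step you flag as ``the main obstacle'' is the whole content of the theorem, and the constructions you sketch do not work.

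\begin{itemize}
\item Any term in which $c$ fills several argument slots of $t$ need not preserve $B$. This covers $t(c(\dots),\dots,c(\dots))$, your $t'$, and any iteration of it. For inputs from $B$, all the inner values $c(\dots)$ may lie outside $B$, since nothing is known about $c$ on $B$ (that is exactly what is to be proved). So absorption by $t$ says nothing, and your induction on the number of non-$B$ arguments has no base case.
\item The cyclic-shift arrangement is vacuous: $t(c(x_1,\dots,x_p),c(x_2,\dots,x_p,x_1),\dots)=t(c(\mathbf x),\dots,c(\mathbf x))=c(\mathbf x)$.
\item ``Contains $c$ essentially'' is not a reason for being Taylor. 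Taylorness must be certified by identities, and $t$ satisfies none beyond idempotence.
\end{itemize}

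The missing idea is to let $c$ occupy \emph{one} slot at a time, and to use \emph{every} slot.

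\begin{itemize}
\item For $k\in[n]$ set $r_k(x,y_1,\dots,y_p):=t(x,\dots,x,c(y_1,\dots,y_p),x,\dots,x)$, with $c$ in the $k$th argument of $t$. For $x,y_i\in B$, all arguments of $t$ but one lie in $B$, so each $r_k$ preserves $B$. So does $t$, hence every term built from $t,r_1,\dots,r_n$.
\item By cyclicity and idempotence of $c$, $\m a$ satisfies $r_k(x,y_1,\dots,y_p)\approx r_k(x,y_2,\dots,y_p,y_1)$ and $r_k(x,y,\dots,y)\approx t(x,\dots,x,y,x,\dots,x)$, with $y$ in slot $k$.
\item No interpretation by projections satisfies this system. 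If $t$ is the $k$th projection, the second identity makes $r_k$ a projection onto some $y_i$. The first identity then gives $y_i\approx y_{i+1}$, which is impossible since $p\ge 2$.
\item Merge $t,r_1,\dots,r_n$ into a single idempotent term of which all of them are minors; an iterated full composition $\lhd$ works, by idempotence. This gives a Taylor term of $\m a$ preserving $B$. By minimality it generates $\clo a$, so $B$ is a subuniverse.
\end{itemize}

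A single slot would not suffice. Taking $t$ to project onto a different slot and $r_k$ onto $x$ satisfies that smaller system, which is why all $n$ slots are needed. The paper itself only quotes this theorem without proof, so there is no in-paper argument to compare against.
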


The absorption is a much stronger property in minimal Taylor algebras than in general Taylor algebras, in which the above theorem has easy counterexamples. However, the binary absorption $\lhd_2$ is even stronger.

\begin{thm}[Theorem 5.7 of \cite{dreamteam}]\label{2absstrproj}
Let $\m a$ be a minimal Taylor algebra and $B\subseteq A$. The following are equivalent:
\begin{enumerate}
\item $B$ is a 2-absorbing subset of $\m a$.
\item $B$ is a projective subuniverse of $\m a$.
\item $B$ is a strongly projective subuniverse of $\m a$.
\end{enumerate}
\end{thm}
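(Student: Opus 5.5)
We prove the cycle of implications $(3)\Rightarrow(1)\Rightarrow(2)\Rightarrow(3)$.

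\emph{The step $(3)\Rightarrow(1)$.} This is the observation made right after the definition of projectivity. Suppose $\m a$ is not a set; otherwise $\m a$ would have no Taylor term. Minimality also forces $|A|\geq 2$ for nontriviality, and the case $B=\emptyset$ or $B=A$ is trivial. Then $\clo{a}$ contains a binary operation $f$ that is not a projection. Take for instance $f(x,y)=t(x,\dots,x,y)$ for a cyclic term $t$; if this were a projection, cyclicity would push $t$ toward being a projection, contradicting the Taylor property. So both coordinates of $f$ are essential. Strong projectivity then gives $f(b,a)\in B$ and $f(a,b)\in B$ whenever $b\in B$, which is exactly 2-absorption witnessed by $f$.

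\emph{The step $(1)\Rightarrow(2)$.} This is the real content. Let $s$ witness $B\lhd_2$, as a set; by Theorem~\ref{abssetsubuniv}, $B$ is a subuniverse. The plan is to use minimality to replace the clone by one generated by a Taylor term built from $s$. I would try the following. Let $t$ be any Taylor (say cyclic) term. The term $t'(x_1,\dots,x_n)=s(t(x_1,\dots,x_n),t(x_2,\dots,x_n,x_1))$, or a similar nesting with $s$ on top, is again cyclic. More importantly, consider the clone $\mathcal C$ of all term operations $g$ such that for some coordinate $i$, $g(\mathbf a)\in B$ whenever $\mathbf a(i)\in B$.

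The key steps are these:
\begin{enumerate}
\item Show that $\mathcal C$ is a clone. It contains the projections. Closure under composition is clear: if $g$ has a $B$-coordinate $i$ and the $i$-th inner function $h_i$ has a $B$-coordinate $j$, then $j$ is a $B$-coordinate of the composite.
\item Show that $\mathcal C$ contains a Taylor term. Here $s\in\mathcal C$, since either coordinate works. Then I would show that some Taylor term lies in $\mathcal C$ by composing $s$ so that every variable sits in an absorbing position. Iterating $s$ into a balanced binary tree $s(s(x_1,x_2),s(x_3,x_4))$ is in $\mathcal C$. One then needs a Taylor identity, obtained for example by symmetrizing with a cyclic term placed at the bottom: $t(s(x_1,y),\dots)$ would not obviously stay in $\mathcal C$.
\end{enumerate}
So the cleaner idea is as follows. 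Let $t$ be cyclic of arity $n$ and define $t''=s(\cdots s(s(t(\bar x),t(\sigma\bar x)),t(\sigma^2\bar x))\cdots)$. This is still not obviously in $\mathcal C$. The alternative is to use Siggers/Olšák-type identities directly: with $\mathcal C$ a clone containing $s$, it suffices to find a Taylor term in $\mathcal C$.

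I expect the main obstacle to be exactly this point, namely exhibiting a Taylor operation all of whose outputs respect a $B$-coordinate. My first attempt would be to argue by contradiction instead. If some term $f$ of arity $n$ has no $B$-coordinate, pick tuples $\mathbf a^1,\dots,\mathbf a^n$ with $\mathbf a^i(i)\in B$ but $f(\mathbf a^i)\notin B$. Use $s$ to combine them, via the term $s$ applied coordinatewise in a tree over the $n$ tuples, producing a tuple entirely in $B$ whose image under a derived term equals something forced outside $B$. This contradicts the fact that $B$ is a subuniverse. To get the contradiction, one uses idempotence and the fact that for $f\in\clo a$, the Taylor term generating $f$ is itself expressed through $f$ and $s$ by minimality.

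\emph{The step $(2)\Rightarrow(3)$.} Let $f$ have essential coordinate $i$ and projective coordinate $j\neq i$. Essentiality of $i$ gives a binary minor $g(x,y)$ of $f$ with $x$ in position $i$ that depends on $x$. We then want $g$ to absorb $B$ in its first variable. I would reduce to binary operations: the binary operations of $\m a$ form a semigroup-like structure in which a minimal Taylor algebra generated from any nonprojection binary $g$ together with a Taylor term. Minimality shows that $g(x,y)$ and $g(y,x)$ both project $B$ into the relevant coordinate. Combining this with projectivity of every composite forces both coordinates of $g$ to be $B$-coordinates, and hence $i$ is a $B$-coordinate of $f$.
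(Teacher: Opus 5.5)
There is a genuine gap, at exactly the point you flag as the obstacle. (The paper quotes this theorem without proof, so your argument has to stand on its own.)

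Your reduction for $(1)\Rightarrow(2)$ is correct as far as it goes. The set $\mathcal C$ of term operations having a $B$-coordinate is a clone, so by minimality it suffices to exhibit one Taylor term operation in $\mathcal C$. But you never exhibit one. The terms you write with $s$ on top, such as $s(t(\bar x),t(\sigma\bar x))$ and its iterates, collapse to $t(\bar x)$ by cyclicity and idempotence. The closing argument by contradiction names no derived term and gives no reason why its value should be forced outside $B$.

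Your step $(2)\Rightarrow(3)$ is likewise only a statement of intent. The claim that ``minimality shows'' $g(x,y)$ and $g(y,x)$ both respect $B$ has no argument behind it.

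In $(3)\Rightarrow(1)$ your justification is also false: the majority operation $m$ is cyclic, yet $m(x,x,y)=x$ is a projection. It is not needed, though. Every coordinate of a cyclic term $t$ is essential, so strong projectivity gives $t(a,\dots,a,b),\,t(b,\dots,b,a)\in B$ for $b\in B$ directly.

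The missing idea is to put the absorbing term inside and the cyclic term outside. Let $s$ witness 2-absorption; by Theorem~\ref{abssetsubuniv}, $B$ is a subuniverse. Every coordinate of $e(x_1,\dots,x_n)=s(x_1,s(x_2,\dots,s(x_{n-1},x_n)\dots))$ is a $B$-coordinate. So for a cyclic term $c$ of arity $n$, consider
\[
T(x_1,\dots,x_n)=c\bigl(e(x_1,\dots,x_n),e(x_2,\dots,x_n,x_1),\dots,e(x_n,x_1,\dots,x_{n-1})\bigr).
\]
$T$ is cyclic, hence Taylor. If some $x_k\in B$, then all arguments of $c$ lie in $B$, so $T\in B$.

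Now use, instead of $\mathcal C$, the set $\mathcal D$ of term operations all of whose essential coordinates are $B$-coordinates. It is a clone, because an essential coordinate of $g(h_1,\dots,h_m)$ is essential in some $h_j$ with $j$ essential in $g$. Since $T\in\mathcal D$, minimality gives $\mathcal D=\mathrm{Clo}\,\mathbf A$, which is $(1)\Rightarrow(3)$.

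The rest of the cycle is then easy:
\begin{itemize}
\item $(3)\Rightarrow(2)$ is trivial, since an idempotent operation on a set with at least two elements has an essential coordinate.
\item $(2)\Rightarrow(1)$ holds because a $B$-coordinate of a cyclic $c$ makes every coordinate a $B$-coordinate, so $c(x,\dots,x,y)$ witnesses $B\lhd_2\mathbf A$.
\end{itemize}
This replaces both of your unproved steps.
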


Since binary absorption is such a strong property in minimal Taylor algebras, it will be useful to have a criterion which establishes this property. Fortunately, there is such a result in \cite{dreamteam}, and we will provide an alternative proof of it, see Corollary~\ref{2abscriterion}, after we have proved a few more results and defined a few more concepts.

Another concept which we will investigate is the ternary absorption. There is a related concept, the {\em center} of an algebra, which is defined below.

\begin{df}\label{centerdef}
Let $\m a$ be a minimal Taylor algebra. $\emptyset\neq C\subseteq A$, is a \emph{center} (resp.~\emph{Taylor center}) if there exists an algebra (resp.~Taylor algebra) $\m b$ of the same signature as $\m a$ and $R\leq_{sd}\m a\times \m b$ such that $\m b$ has no nontrivial 2-absorbing subuniverse and that \[C=\{a\in A:[a]R=B\}.\]
\end{df}

In minimal Taylor algebras, we can say a lot more about the centers:

\begin{thm}[Theorem 5.10 of \cite{dreamteam}]\label{centers3abs}
Let $\m a$ be a minimal Taylor algebra and $B\subseteq A$. The following statements are equivalent:
\begin{enumerate}
\item $B\lhd_3 \m a$.
\item $(B\times A)\cup (A\times B)$ is a subuniverse of $\m a^2$.
\item $B$ is a Taylor center of $\m a$.
\end{enumerate}
\end{thm}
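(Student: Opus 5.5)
The plan is to prove the cycle of implications $(1)\Rightarrow(2)\Rightarrow(3)\Rightarrow(1)$, using Theorem~\ref{abssetsubuniv}, Theorem~\ref{2absstrproj}, Theorem~\ref{minTvariety} and Theorem~\ref{subuniversesminTaylor} as the main tools.

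\emph{$(1)\Rightarrow(2)$.} Let $t$ be a ternary term witnessing $B\lhd_3\m a$, and let $f$ be an $n$-ary term. Apply $f$ to pairs $(x_1,y_1),\dots,(x_n,y_n)\in (B\times A)\cup(A\times B)$. If the result fails to lie in $(B\times A)\cup(A\times B)$, then we have a failure of the same kind for the ``two-out-of-three'' pattern. The natural route is the following: the set $(B\times A)\cup(A\times B)$ is an absorbing subset of $\m a^2$, with absorption witnessed coordinatewise by $t$ (taken in a suitable arity). Indeed, applying $t$ to three pairs, each in the set, places at least two first coordinates in $B$ or at least two second coordinates in $B$, by pigeonhole on the six positions. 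Since $\m a^2$ is a minimal Taylor algebra by Theorem~\ref{minTvariety}, Theorem~\ref{abssetsubuniv} then makes this absorbing set a subuniverse. This is clean, and I expect it to work directly.

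\emph{$(2)\Rightarrow(3)$.} Put $R=(B\times A)\cup(A\times B)$, viewed as a subdirect product $\m a\times\m a$. Then $\{a:[a]R=A\}=B$, provided $B\neq A$; the case $B=A$ needs a trivial separate treatment, e.g.\ a one-element $\m b$. The difficulty is that the definition of a Taylor center requires the second factor to have no nontrivial 2-absorbing subuniverse, and $\m a$ itself may have some. So we instead pass to a subalgebra $\m b\le\m a$ that is minimal among the 2-absorbing subuniverses: by transitivity of binary absorption, $\m b$ has no proper 2-absorbing subuniverse, and it is Taylor by Theorem~\ref{minTvariety}. We then restrict $R$ to $A\times B'$. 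Subdirectness on the second coordinate is automatic, since $B\times B'\subseteq R$. The center computed from this restriction is $\{a:[a]R\supseteq B'\}$, which still contains $B$, and for $a\notin B$ we have $[a]R=B\cap B'$. So we need $B'\not\subseteq B$. I would choose $B'$ by iterating inside $A$, avoiding $B$. This is possible because a 2-absorbing subuniverse $C$ contained in $B$ together with $B\lhd_3$ is awkward; if needed, I would instead choose $\m b$ to be a quotient of $\m a$. This step is the one I expect to be the main obstacle: producing a second factor that has no binary absorption while still keeping the center exactly equal to $B$.

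\emph{$(3)\Rightarrow(1)$.} Let $R\le_{sd}\m a\times\m b$ with $\m b$ Taylor and free of nontrivial 2-absorption, and let $C$ be the center. The target is a ternary term $t$ with $t(B,B,A)$, $t(B,A,B)$ and $t(A,B,B)$ all contained in $B$. The key fact to use is the Barto--Kozik-type result that a Taylor algebra with no proper binary absorbing subuniverse has a term $s$ such that, for any $b\in B$, $s$ applied with $B$ in all but one position is still ``full''. Concretely, I would use that in such a $\m b$ every element is reachable, so that the set $\{(x,y,z):\ldots\}$ pp-defined via $R$ yields $[t(a_1,a_2,a_3)]R=B$ whenever two of the $a_i$ lie in $C$. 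I would choose $t$ from a cyclic term via Theorem~\ref{Taylor=cyclic}, and argue by contradiction. If $[t(c,c',a)]R$ were a proper subuniverse of $\m b$, then the union of such sets over the relevant choices would form a binary absorbing subuniverse of $\m b$, with absorption witnessed by the binary term $t(x,x,y)$; this contradicts the hypothesis on $\m b$ via Theorem~\ref{2absstrproj}. Making this last contradiction rigorous is the second delicate point.
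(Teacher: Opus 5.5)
The paper takes this theorem from \cite{dreamteam} without proof, so your argument has to stand on its own. Each of the three implications has a genuine gap.

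\textbf{$(1)\Rightarrow(2)$.} Your pigeonhole count only shows that $R=(B\times A)\cup(A\times B)$ is \emph{closed} under $t$, i.e.\ when all three inputs lie in $R$. Absorption needs the conclusion when only two inputs lie in $R$. Then the two $B$-entries can sit in different coordinates, as in $(b,a),(a',b'),(x,y)$, and nothing follows. In fact $R$ is in general not an absorbing set of $\m a^2$ for any term. Take the two-element majority algebra with $B=\{1\}$: the majority applied to $(1,0),(0,1),(0,0)$ gives $(0,0)\notin R$. Every term operation $f$ of this algebra is monotone and self-dual. So there are a tuple $x$ and a position $j$ with $x_j=0$, $f(x)=0$ and $f(x')=1$, where $x'$ is $x$ with the $j$-th entry switched to $1$. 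Feed the pairs $(x_i,1-x_i)\in R$ for $i\neq j$ and $(0,0)$ at position $j$. The first coordinates form $x$; the second coordinates form the complement of $x'$. By self-duality the output is $(0,0)\notin R$, even though $n-1$ inputs lie in $R$. So Theorem~\ref{abssetsubuniv} cannot be invoked. Closure under $t$ alone would suffice if $t$ were a Taylor term, via Theorem~\ref{subuniversesminTaylor} in the minimal Taylor algebra $\m a^2$ or directly by minimality. But a $3$-absorption witness need not be Taylor. What is missing is, for example, a Taylor term of $\m a$ that preserves $R$.

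\textbf{$(2)\Rightarrow(3)$.} Taking the second factor among subalgebras or quotients of $\m a$ cannot work in general. You also did not check subdirectness in the first coordinate, which needs $B\cap B'\neq\emptyset$. Take $\m a=(\{0,1\};\vee)$ and $B=\{1\}$; all three conditions of the theorem hold. Every nontrivial finite algebra in $\hsp(\m a)$ is a semilattice, and its top element is a $2$-absorbing singleton. The trivial algebra yields the center $A$, not $B$. So the witnessing algebra must be found outside the variety of $\m a$, using that the definition only asks for a Taylor algebra of the same signature. Here, for instance, $(\mathbb{Z}_3;2x+2y)$ with $R=\{(0,0)\}\cup(\{1\}\times\mathbb{Z}_3)$ works. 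Your plan contains no construction of this kind.

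\textbf{$(3)\Rightarrow(1)$.} The second factor is only assumed to be Taylor and need not lie in the variety of $\m a$. So Theorem~\ref{2absstrproj}, which is about minimal Taylor algebras, does not apply to it. Identities such as cyclicity of a term of $\m a$ need not hold in it. Also, a union of subuniverses need not be a subuniverse. What you describe is therefore not yet an argument, and this direction — that a Taylor center is $3$-absorbing — is substantial content of the theorem that remains unproved.
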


There is a result by D. Zhuk which connects binary absorbing subuniverses, ternary absorbing subuniverses, and congruence classes of ``polynomially complete" congruences with the Constraint Satisfaction Problem. This is the following theorem, which follows from Theorems 5.5 and 5.6 of \cite{Zhdich}.

\begin{thm}[Zhuk's Reduction Theorem]\label{Zhlemma}
Let $\vr t$ be a template of minimal Taylor algebras and let $P=(V,D,\vr c)$ be a multisorted instance of $CSP(\vr t)$. Suppose that $P$ is Z-irreducible, (1,1)-minimal, cycle consistent and has a solution.
\begin{enumerate}
\item If $B\lhd_2 \m a_i$ or  $B\lhd_3 \m a_i$, then $P$ has a solution $f$ such that $f(i)\in B$.
\item if $\m a_i$ has no proper binary nor ternary absorbing subuniverse, $\theta\in \Cn a_i$, $\m a_i/\theta$ is polynomially complete and $B$ is any fixed $\theta$-class, then $P$ has a solution $f$ such that $f(i)\in B$.
\end{enumerate}
\end{thm}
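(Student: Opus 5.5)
This statement is quoted from \cite{Zhdich} as a consequence of Theorems 5.5 and 5.6 there, so my plan is to translate Zhuk's results into the present language rather than reprove them from scratch. The key steps are: (i) match our notions to Zhuk's, and (ii) run his argument on a single coordinate $i$.

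For step (i), I will check that for minimal Taylor algebras the hypotheses here coincide with Zhuk's ``strong subuniverses''. By Theorem~\ref{2absstrproj}, $B\lhd_2\m a_i$ means $B$ is a strongly projective subuniverse. By Theorem~\ref{centers3abs}, $B\lhd_3\m a_i$ means $B$ is a Taylor center. Zhuk's ``binary absorbing'' and ``center'' cases are exactly these, and his polynomially complete congruence-class case is the one in part~(2), where the absence of binary and ternary absorbing subuniverses is his hypothesis for that case. I also need that every algebra appearing in the reductions stays inside $\vr t$ and is minimal Taylor; this follows from Theorem~\ref{minTvariety} and the closure of templates under $\mathsf{H}$ and $\mathsf{S}$.

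For step (ii), I will argue by induction on the total size $\sum_j|A_j|$ of the domains. Given a solvable, Z-irreducible, $(1,1)$-minimal, cycle-consistent instance, I restrict the domain of $i$ to $B$ and propagate: each other domain $A_j$ is replaced by its set of $B$-reachable elements along paths in the instance. Binary or ternary absorption, and likewise a block of a polynomially complete congruence, is preserved under pp-definitions. So each new domain $A_j'$ is again absorbing, or a center, or a class of the corresponding kind, in $A_j$.

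The main obstacle is showing that the reduced instance, after re-establishing $(1,1)$-minimality and cycle consistency, still has a solution. This is where Z-irreducibility and Zhuk's solvability-transfer argument are needed. In the absorbing case, I would try first to take a solution $f$ and combine it, via the absorbing term applied coordinatewise, with tuples from the constraints that lie in the reduced domains. For binary absorption this should yield a solution landing in $B$ directly, by strong projectivity. For ternary absorption and for the polynomially complete case, I expect to need Zhuk's linkedness and irreducibility arguments. Realistically, the cleanest route is to cite Theorems 5.5 and 5.6 of \cite{Zhdich} after the dictionary of step (i).
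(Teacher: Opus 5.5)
Your fallback, translating the notions and citing Theorems 5.5 and 5.6 of \cite{Zhdich}, is exactly the paper's treatment. The paper gives no argument of its own for this statement. It only asserts that the statement follows from Zhuk's theorems, leaves Z-irreducibility and cycle consistency undefined, and stresses that Zhuk's proof is inseparable from his whole dichotomy proof.

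So the dictionary plus the citation is the entire content. The cleanest way to justify the dictionary is this: Zhuk works with reducts $(A;w)$ by a weak near-unanimity term, which is a Taylor term and hence generates the whole clone of a minimal Taylor algebra. Therefore subuniverses, congruences, polynomially complete quotients and binary absorption are unchanged. Binary absorption is literally $\lhd_2$ on both sides, so Theorem~\ref{2absstrproj} is not needed. Theorem~\ref{centers3abs} is what links ternary absorption to centers.

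Step (ii) is not a viable self-contained route and should be dropped.

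\textbf{Propagation of polynomially complete blocks fails.} Only absorption propagates along a subdirect $R\le\m a_i\times\m a_j$: the image is absorbing with the same term, by choosing preimages in $B$. Nothing comparable holds for part (2). Suppose $R$ is the graph of a surjection $\m a_j\to\m a_i$. Then the image of a $\theta$-block is a block of a congruence of $\m a_j$ with polynomially complete quotient. But $\m a_j$ may have proper binary absorbing subuniverses, so the hypotheses of part (2) are not inherited by the new domains.

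\textbf{The binary case does not follow \emph{directly by strong projectivity}.} Suppose you apply the absorbing term coordinatewise to a solution $f$ and to tuples $r_C\in R_C$ with $r_C(i)\in B$, chosen constraint by constraint. For each constraint containing $i$ you get a tuple of $R_C$ whose $i$th entry lies in $B$. However, these tuples need not agree on shared variables, so no global solution results. Producing an agreeing family is precisely what the theorem asserts, so the ``main obstacle'' you isolate is the whole theorem, already in the binary case.

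The paper makes this point itself. It recovers the binary-absorbing part only in the special setting of Theorem~\ref{largecentred}: SI instances, large centralizer domains, and the assumption that $P/\overline{\mu}$ has a solution through every point. There the required global objects are supplied by hypothesis and turned into consistent retractions.
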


We do not define the Z-irreducible, (1,1)-minimal and cycle consistent instances, but we note that, in polynomial time, any instance can either be solved, or transformed to such an instance which is equivalent to the original one. We also don't define the polynomially complete congruences, but the following Zhuk's Four Types Theorem (for the case of minimal Taylor algebras) shows they are quite ubiquitous:

\begin{thm}[The Four Types Theorem]\label{4types}
Suppose that $\m a$ is a minimal Taylor algebra. Then at least one of the following four statements is correct:
\begin{enumerate}
\item There exists $B\subsetneq A$ such that $B\lhd_2\m a$.
\item There exists $B\subsetneq A$ such that $B\lhd_3\m a$.
\item There exists $\theta\in\Cn a$ such that $\theta\neq A^2$ and $\m a/\theta$ is polynomially complete.
\item There exists $\theta\in\Cn a$ such that $\theta\neq A^2$ and $\m a/\theta$ is affine.
\end{enumerate}
\end{thm}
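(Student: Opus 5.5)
We would prove the Four Types Theorem by induction on $|A|$, assuming that $\m a$ is a minimal Taylor algebra satisfying neither (1) nor (2) and deriving (3) or (4).

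\emph{Reduction to the simple case.} First assume $|A|>1$ (for $|A|=1$ the statement is degenerate, or we treat a one-element algebra as trivially polynomially complete). Choose a maximal congruence $\theta\neq A^2$ of $\m a$. Then $\m a/\theta$ is simple, and by Theorem~\ref{minTvariety} it is again a minimal Taylor algebra. If $\m a/\theta$ is affine we have (4); if it is polynomially complete we have (3). So it suffices to show that a simple minimal Taylor algebra $\m s$ that is neither affine nor polynomially complete has a proper $2$- or $3$-absorbing subuniverse. We then pull it back: if $C\lhd_k \m a/\theta$, then the union $B$ of the $\theta$-classes in $C$ satisfies $B\lhd_k\m a$ via the same term, and $B\subsetneq A$. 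Hence the core of the proof is the simple, non-abelian case.

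\emph{The simple non-abelian case.} Let $\m s$ be simple, non-abelian, Taylor, and not polynomially complete. By Rosenberg-type/Zhuk arguments, a finite simple idempotent algebra that is not polynomially complete preserves some nontrivial relation from a short list. Modulo the clone of polynomials, the candidates are a nontrivial central relation, a bounded partial order, a graph of a nontrivial automorphism/affine structure, or an $h$-regular relation. We use the following correspondence.
\begin{itemize}
\item The affine and automorphism-type cases are excluded because $\m s$ is non-abelian, using Theorem~\ref{R3affine} and Theorem~\ref{affineeq}.
\item The $h$-regular (equivalence-like) case yields a nontrivial tolerance or binary relation whose link structure, via Theorem~\ref{connected-linked} and Proposition~\ref{conntolH}, produces a nontrivial absorbing set.
\item For a central relation $R\le \m s^2$ with center $C$, Definition~\ref{centerdef} and Theorem~\ref{centers3abs} identify $C$ as a Taylor center, hence $C\lhd_3\m s$. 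Alternatively, if the relevant $\m b$ has a binary absorbing subuniverse, we obtain one on $\m s$ by pulling back through $R$; in that case Theorem~\ref{2absstrproj} applies.
\end{itemize}

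\emph{Main obstacle.} The main difficulty is this simple-case classification: showing that non-polynomial-completeness of a simple non-abelian Taylor algebra forces a compatible central or partial-order-type relation, and that each such relation produces a proper $2$- or $3$-absorbing subuniverse. This is the real content of Zhuk's theorem. We would first try to obtain such a relation from Rosenberg's classification applied to $\pol{s}$, then use the minimal Taylor hypothesis, via Theorem~\ref{abssetsubuniv} and Theorem~\ref{subuniversesminTaylor}, to upgrade the resulting absorbing sets to subuniverses. If that route fails, we would instead cite Zhuk's original result in \cite{Zhdich} for this step.
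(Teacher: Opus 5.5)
The paper gives no proof of this theorem; it is quoted from Zhuk \cite{Zhdich} as background, so your proposal has to stand on its own. Your outer reduction is fine. For a maximal congruence $\theta$, $\m s:=\m a/\theta$ is simple and minimal Taylor by Theorem~\ref{minTvariety}, and Lemma~\ref{homabsorb}(1) pulls a proper $k$-absorbing subuniverse of $\m s$ back to a proper one of $\m a$. Going through Rosenberg's maximal clones containing $\pol{s}$ is also a workable plan.

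However, that case analysis is the entire content of the theorem, and you do not carry it out. In one case your argument cannot work at all: the $h$-regular relation $\lambda_T$, given by equivalences $\theta_1,\dots,\theta_m$ with $h\geq 3$ blocks each. There you appeal to Proposition~\ref{conntolH} and Theorem~\ref{connected-linked}, but these only give connectivity of $H(\m s)$ or an element related to everything. Nothing there produces an absorbing set, and none should be expected, since $\lambda_T$ has no center.

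The missing idea is that this case cannot occur. Suppose a term operation $f$ and tuples $\mathbf x,\mathbf y$ satisfy $x_j\,(\bigcap_i\theta_i)\,y_j$ for all $j$ but $(f(\mathbf x),f(\mathbf y))\notin\theta_i$. Add constant rows taken from the remaining $h-2$ blocks of $\theta_i$. Every column then lies in $\lambda_T$, while by idempotence the image is a $\theta_i$-transversal, which is not in $\lambda_T$. Hence $\bigcap_i\theta_i$ is a congruence, so it is $0_S$ by simplicity. Then $S$ is, as a set, $[h]^m$, with $\lambda_T$ the coordinatewise ``not all distinct'' relation. Pinning coordinates $2,\dots,m$ by pp-definitions with constants gives a subuniverse on which $\lambda_T$ is the $h$-ary ``not all distinct'' relation on $h$ points. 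For $h\geq 3$ the only idempotent operations preserving that relation are projections (S{\l}upecki), which contradicts $\m s$ being Taylor.

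Two other cases are incomplete.
\begin{itemize}
\item \emph{Bounded partial orders.} You list this case but never treat it. Let $R$ be the order; it is subdirect and compatible, and its least element satisfies $\{0\}=\{a:[a]R=S\}$. If $\m s$ has no proper $2$-absorbing subuniverse, $\{0\}$ is a Taylor center, and Theorem~\ref{centers3abs} gives $\{0\}\lhd_3\m s$.
\item \emph{Central relations.} You only handle arity $2$. For arity $h>2$ you must read $\rho$ as $R\leq_{sd}\m s\times\m s^{h-1}$ and show that $\m s^{h-1}$ has no proper $2$-absorbing subuniverse. Your ``pull back through $R$'' does not achieve this: $\{a:[a]R\cap D\neq\emptyset\}$ is $2$-absorbing, but it may be all of $S$. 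What works is that projections and fibres of a $2$-absorbing subuniverse of $\m s^n$ are again $2$-absorbing (fibres by idempotence). So by induction $\m s^n$ has no proper one if $\m s$ has none.
\end{itemize}

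Two smaller points. Graphs of fixed-point-free permutations are excluded because constants are polynomials, not because $\m s$ is non-abelian. Your fallback of citing \cite{Zhdich} ``for this step'' amounts to citing the statement itself.
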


Simplifying slightly, Zhuk's Reduction Theorem allows us to keep reducing the instance until we are left with an instance which has an affine factor in each $\m a_i$, and Zhuk comes up with a brilliant learning algorithm which emulates solving systems of linear equations over factor algebras by solving smaller CSP instances. However, Zhuk's proof of his Reduction Theorem (Theorem~\ref{Zhlemma}) is inseparable from the whole Dichotomy Theorem, so we are not shortening nor simplifying anything if we use Zhuk's Reduction Theorem.

Finally, we recall a result about minimal Taylor algebras which provides us with a special operation which has many interesting properties.

\begin{thm}[Theorem 5.23 from \cite{dreamteam}]\label{universalopMTA}
Let $\m a$ be a minimal Taylor algebra. There exists a ternary term $t(x,y,z)$ which satisfies the following:
\begin{enumerate}
\item For every $B\subseteq A$ such that $B\lhd_2\m a$, $B$ binary absorbs $\m a$ via each of the operations $t(x,x,y)$, $t(x,y,x)$ and $t(y,x,x)$.
\item For every $B\subseteq A$ such that $B\lhd_3\m a$, $B$ absorbs $\m a$ via $t(x,y,z)$.
\item For every subalgebra $\m b\leq \m a$ and $\theta\in\cn b$ such that $\m b/\theta$ is affine, $\m b/\theta\models t(x,y,z)\approx x-y+z$.
\end{enumerate}
\end{thm}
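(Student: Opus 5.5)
We need to prove Theorem 5.23 of the dreamteam paper: a single ternary term $t$ satisfying (1), (2) and (3) simultaneously. The plan is to produce, for each requirement separately, a ternary term satisfying it, and then combine all of them into one term by iterated composition. The combination will preserve each property because each property is closed under suitable compositions. Since $A$ is finite, there are only finitely many sets $B\lhd_2\m a$, finitely many $B\lhd_3\m a$, and finitely many pairs $(\m b,\theta)$ with $\m b/\theta$ affine, so only finitely many conditions need to be met.

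First, the individual witnesses.

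For (3), the operation $x-y+z$ is the Maltsev operation on each affine quotient. Every term there has the form $\sum r_ix_i$ with $\sum r_i=1$. So it suffices to find a ternary term $t$ with $t^{\m b/\theta}(x,y,z)=x-y+z$ for all affine sections at once. I would take a cyclic term $c$ of prime arity $p>|A|$ (Theorem~\ref{Taylor=cyclic}). On an affine section, $c$ is $r\sum x_i$ with $pr=1$, so $c(x,\dots,x,y)=x+r(y-x)$. Iterating and substituting, one obtains $x-y+z$, using finiteness of the ring so that some power of the relevant coefficient acts as an identity. Alternatively, one can take a term that is Maltsev on all affine sections by a standard product-of-sections argument in $\mathsf{HSP}_{fin}(\m a)$.

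For (1), Theorem~\ref{2absstrproj} says every $B\lhd_2\m a$ is strongly projective. Hence any ternary term in which all three variables are essential gives $t(x,x,y),t(x,y,x),t(y,x,x)\in B$ whenever one argument lies in $B$, provided the binary minors have all coordinates essential. So for (1) it suffices that each of the three binary minors of $t$ is not a projection. This will follow if $t$ restricted to each two-element-generated subalgebra is non-projective in each position. Strong projectivity then gives absorption.

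For (2), Theorem~\ref{centers3abs} gives that $(B\times A)\cup(A\times B)$ is a subuniverse. Ternary absorption of $B$ via $t$ means that $t$ maps into $B$ whenever two of its arguments are in $B$. I would get a witness $t_B$ for each such $B$ from $B\lhd_3\m a$ directly, and then compose.

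Second, the combination step. Given ternary terms $s,u$, I would form $u(s(x,y,z),s(y,z,x),s(z,x,y))$, or more carefully $u(s(x,y,z),\,y',\,z')$-type nestings, in a form that keeps property (2) for both. If $s$ absorbs $B$ (two arguments in $B$ give output in $B$), then every inner value lies in $B$, so the whole term lands in $B$ by idempotence or by absorption via $u$. Properties (1) and (3) must also survive composition. For (3), any composition of Maltsev-on-affine terms arranged as $s(x,s(y,y,y),z)$-like forms stays $x-y+z$ if it is built with the Maltsev identities in mind. I would instead fix the affine behaviour last: compose so that on affine sections the final term is an affine combination, then correct it by plugging into the (3)-witness $m$ as $m(t_1(x,y,z),t_2(x,y,z),t_3(x,y,z))$, with $t_i$ chosen to reduce to $x,y,z$ on affine sections.

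Main obstacle. The hardest part is making (2) and (3) coexist: a Maltsev term generally does not ternary-absorb. Here minimality must enter. The idea is that the ternary absorption witness $t_B$ generates a Taylor clone, so by minimality it generates all of $\clo{a}$. In particular, the Maltsev-on-affine term $m$ can be expressed from $t_B$ alone, and then $B$, being closed under the relevant pattern, is absorbed by every term built from $t_B$ in a "two-out-of-three" manner. I would try to prove that the set of ternary terms absorbing $B$ forms a subclone-like family closed under composition that contains a Taylor term, hence contains a term that is Maltsev on affine sections. This is the step I would check most carefully. Iterating this over the finitely many $B$, and handling (1) similarly via the non-projectivity of minors, finishes the proof.
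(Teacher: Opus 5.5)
\textbf{There is a genuine gap at the combination step.} The step you flag as hardest is where the proposal breaks, and the mechanism you suggest for it does not work.

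Your combination step only transfers ternary absorption from the \emph{inner} term. In $u(s(x,y,z),s(y,z,x),s(z,x,y))$, every set absorbed via $s$ is absorbed by the composite. But for a set $B'$ absorbed via $u$ and not via $s$, nothing puts two of the three inner values into $B'$. Witnesses of $B\lhd_3\mathbf{A}$ are closed under $g(f_1,\dots,f_n)$ when \emph{all} $f_i$ are witnesses. They are not closed under substituting arbitrary terms into a witness. So you have no way yet to merge witnesses for different $B$, let alone while prescribing the behaviour on affine sections.

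The minimality argument is also false: a witness $t_B$ need not generate a Taylor clone, even in a minimal Taylor algebra. On $A=\{0,1,2\}$, let $m(x,y,z)=0$ if some argument is $0$, and let $m$ be the majority on $\{1,2\}$.
\begin{itemize}
\item Every term operation is $0$ exactly when an essential variable is $0$, and is monotone self-dual on $\{1,2\}$.
\item Hence $(A;m)$ is minimal Taylor: a Taylor term generates a ternary term that is majority on $\{1,2\}$, and such a term must equal $m$.
\item The term $m(x,x,m(x,y,z))$ witnesses $\{0\}\lhd_3\mathbf{A}$, yet it is the first projection on the subalgebra $\{1,2\}$. So it generates a non-Taylor clone.
\end{itemize}
Even when $t_B$ does generate $\mathrm{Clo}\,\mathbf{A}$, terms built from it need not absorb $B$. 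Examples are $x$ itself, or $t_B(x,x,t_B(x,y,z))$ with only $x\notin B$.

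Knowing that the witnesses contain a Taylor term $s$ also does not give you $x-y+z$. It only yields the terms $g(s\circ\sigma_1,s\circ\sigma_2,\dots)$. On an affine section these are affine combinations of permuted copies of the linear form of $s$. If that form is symmetric, e.g.\ $2x+2y+2z$ over $\mathbb{Z}_5$, you never reach $x-y+z$.

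A smaller point: your sufficient condition for (1), non-projectivity on every 2-generated subalgebra, contradicts (3). On affine sections $x-y+z$ gives $t(x,x,y)=y$. What (1) needs is only that the binary minors are not projections on $\mathbf{A}$ itself.

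\textbf{How to close the gap.} The paper does not reprove the theorem; it relies on the proof in \cite{dreamteam}, remarking only that finiteness, not 2-generation, of the affine quotients is used. The missing idea is to avoid combining separate witnesses altogether and take a single minor of one cyclic term.

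Let $c$ be cyclic of prime arity $p$ (Theorem~\ref{Taylor=cyclic}), and let $L=\mathrm{lcm}(1,\dots,|A|)$. Put $t(x,y,z)=c(x^iy^jz^k)$ with $i+j+k=p$, $1\le i,j,k\le (p-1)/2$, $i\equiv p\pmod L$ and $j\equiv -p\pmod L$ (hence $k\equiv p$). Such $i,j,k$ exist once $p$ is large.

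\begin{itemize}
\item \emph{Property (1).} For $|A|>1$ all coordinates of $c$ are essential. So by strong projectivity (Theorem~\ref{2absstrproj}), $c$ lands in $B$ as soon as one argument is in $B$. This covers all three binary minors.
\item \emph{Property (2).} By Theorem~\ref{centers3abs}, $R=(B\times A)\cup(A\times B)$ is a subuniverse of $\mathbf{A}^2$. Let $u$ be the tuple with consecutive blocks $b^i,{b'}^j,a^k$, where $b,b'\in B$, and let $v$ be $u$ rotated by $k$ places. The $a$-blocks of $u$ and $v$ are disjoint because $2k<p$, so $(u_l,v_l)\in R$ for every $l$. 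Hence $(c(u),c(v))\in R$, and cyclicity gives $c(u)=c(v)$, which forces $c(u)\in B$. The other two patterns are handled identically.
\item \emph{Property (3).} Write an affine $\mathbf{B}/\theta$ as a faithful module of exponent $e\le|A|<p$. There the cyclic term is $p^{-1}\sum_l x_l$. Since $e\mid L$, the congruences on $i,j,k$ give $t=p^{-1}(ix+jy+kz)=x-y+z$.
\end{itemize}

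This single term handles every $B$ and every affine section at once, which is exactly what your composition scheme cannot guarantee. Finiteness enters only through $e\le|A|$, in line with the paper's remark.
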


A careful reader of \cite{dreamteam} may complain that item (3) of Theorem~\ref{universalopMTA} does not match the third item of Theorem 5.23 of \cite{dreamteam}. Namely, Theorem 5.23 was speaking of ``affine edges", which means that the algebras $\m b/\theta$ are 2-generated, with additional minimality properties. Having said that, a review of the proof of Theorem 5.23 and Theorem 5.13(c) in \cite{dreamteam} reveals that the 2-generated property is never actually used, just a finiteness assumption, so Theorem~\ref{universalopMTA} was proved in \cite{dreamteam}, after all. We strengthen their result slightly to fit our needs, after an easy lemma.

\begin{lm}\label{homabsorb}
Let $\m a$ and $\m b$ be similar algebras, $\varphi:\m a\rightarrow\m b$ a surjective homomorphism.
\begin{enumerate}
\item If $D\lhd_k \m b$ and $C=\varphi^{-1}(D)$, then $C\lhd_k\m a$ witnessed by the same $k$-ary term that witnesses $D\lhd_k \m b$.
\item If $C\lhd_k \m a$ and $D=\varphi(C)$, then $D\lhd_k\m b$ witnessed by the same $k$-ary term that witnesses $C\lhd_k \m a$.
\end{enumerate}
\end{lm}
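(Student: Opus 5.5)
For part (1), I will show that the $k$-ary term $t$ witnessing $D\lhd_k\m b$ also witnesses $C\lhd_k\m a$, where $C=\varphi^{-1}(D)$, and that $C$ is a subuniverse. Take $\mathbf{a}\in A^k$ with $|\{i:\mathbf{a}(i)\in C\}|\geq k-1$ and put $\mathbf{b}=\varphi\circ\mathbf{a}$. Since $\mathbf{a}(i)\in C$ exactly when $\varphi(\mathbf{a}(i))\in D$, the tuple $\mathbf{b}$ also has at least $k-1$ coordinates in $D$, so $t^{\m b}(\mathbf{b})\in D$. Because $\varphi$ is a homomorphism, $\varphi(t^{\m a}(\mathbf{a}))=t^{\m b}(\mathbf{b})\in D$, which gives $t^{\m a}(\mathbf{a})\in C$. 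The preimage of a subuniverse under a homomorphism is a subuniverse, so $C$ is a subuniverse of $\m a$ and $C\lhd_k\m a$. Surjectivity is not needed for this part.

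For part (2), let $t$ witness $C\lhd_k\m a$ and put $D=\varphi(C)$. The homomorphic image of a subuniverse is a subuniverse, so $D$ is one. Take $\mathbf{b}\in B^k$ with at least $k-1$ coordinates in $D$. I choose a lift $\mathbf{a}\in A^k$ coordinatewise. When $\mathbf{b}(i)\in D$, I take $\mathbf{a}(i)\in C$ with $\varphi(\mathbf{a}(i))=\mathbf{b}(i)$, which exists by the definition of $D$. For any remaining coordinate, I take an arbitrary preimage, which exists by surjectivity of $\varphi$. Then $\mathbf{a}$ has at least $k-1$ coordinates in $C$, so $t^{\m a}(\mathbf{a})\in C$, and applying $\varphi$ gives $t^{\m b}(\mathbf{b})=\varphi(t^{\m a}(\mathbf{a}))\in D$.

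The only step that needs care is this choice of lift in (2). We must lift the coordinates lying in $D$ into $C$ specifically, not merely into $\varphi^{-1}(D)$, since $C$ may be strictly smaller than $\varphi^{-1}(D)$. Surjectivity of $\varphi$ is used only for the one possibly exceptional coordinate. The rest of the argument is a routine verification.
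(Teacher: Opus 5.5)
Your proof is correct and follows the paper's argument essentially verbatim. In (1) you push the tuple forward through $\varphi$ and use that membership in $C$ is equivalent to membership of the image in $D$; in (2) you lift the coordinates lying in $D$ into $C$ and use surjectivity only for the one exceptional coordinate. You also check explicitly that $C$ and $D$ are subuniverses, which the $\lhd_k$ notation requires and which the paper leaves implicit.
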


\begin{proof}
Let $t(x_1,\dots,x_k)$ be the term witnessing that $D\lhd_k \m b$. Let $a_1,\dots,a_k\in A$ satisfy that at most one $a_i\notin C$. Then $$\varphi(t(a_1,\dots,a_k))=t(\varphi(a_1),\dots,\varphi(a_k))\in D,$$
since for at most one $i\in [k]$, $\varphi(a_i)\notin D$. Therefore, $t(a_1,\dots,a_k)\in C$ and (1) is proved.

Let $t(x_1,\dots,x_k)$ be the term witnessing that $C\lhd_k \m a$. Select $b_1,\dots,b_k\in B$ such that at most one $b_i\notin D$. Then select $a_1,\dots,a_k\in A$ such that for all $i\in[k]$, $\varphi(a_i)=b_i$ and whenever $b_i\in D$, we select $a_i\in C$. Hence, $a_i\notin C$ for at most one $i\in [k]$, and thus $t(a_1,\dots,a_k)\in C$. Applying $\varphi$, we obtain $t(b_1,\dots,b_k)\in D$, and (2) is proved.
\end{proof}

\begin{cor}\label{universaloppseudovar}
Let $\m a$ be a minimal Taylor algebra and $\vr v$ the pseudovariety it generates. There exists a ternary term $t(x,y,z)$ which satisfies the following:
\begin{enumerate}
\item For every $\m b\in\vr v$, every $C\subseteq B$ such that $C\lhd_2\m b$, $C$ binary absorbs $\m b$ via each of the operations $t(x,x,y)$, $t(x,y,x)$ and $t(y,x,x)$.
\item For every $\m b\in\vr v$, every $C\subseteq B$ such that $C\lhd_3\m b$, $C$ absorbs $\m b$ via $t(x,y,z)$.
\item For every affine algebra $\m b\in\vr v$, $\m b\models t(x,y,z)\approx x-y+z$.
\end{enumerate}
\end{cor}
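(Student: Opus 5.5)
The plan is to lift the term from Theorem~\ref{universalopMTA} to the whole pseudovariety by applying that theorem to a single free algebra, which is again a minimal Taylor algebra, and then transport its three properties to every $\m b\in\vr v$ through subalgebras, homomorphic images and Lemma~\ref{homabsorb}.

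Let $N$ be large enough to bound the sizes of the algebras we need, and let $\m f=\m f_{\vr v}(N)$ be the $N$-generated free algebra in $\vr v$. Since $\m a$ is finite, $\m f$ is finite and lies in $\mathsf{SP}_{fin}(\m a)$, so by Theorem~\ref{minTvariety} it is a minimal Taylor algebra. Theorem~\ref{universalopMTA} applied to $\m f$ gives a ternary term $t$ with properties (1)--(3) relative to $\m f$. A bound $N$ that works for all of $\vr v$ at once is not available, since $\vr v$ contains arbitrarily large finite algebras. I would therefore first reduce each item to algebras of bounded size.

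For (1), suppose $C\lhd_2\m b$ and we need, say, $t(c,c,b)\in C$ for $c\in C$, $b\in B$. This depends only on the subalgebra $\Sg(c,b)$, and $C\cap\Sg(c,b)$ binary absorbs it. A binary-absorption witness passes to subalgebras when intersected, and intersections with subuniverses are subuniverses. The $2$-generated algebra $\Sg(c,b)$ is a homomorphic image of $\m f_{\vr v}(2)$, so by Lemma~\ref{homabsorb}(1) the preimage of $C\cap\Sg(c,b)$ is a binary absorbing subuniverse of $\m f_{\vr v}(2)$. Since $\m f_{\vr v}(2)$ is a retract-like subalgebra of $\m f$ generated by two free generators, the same preimage argument lets us pull it to $\m f$ itself: map $\m f$ onto $\m f_{\vr v}(2)$ by sending generators to $x,y$, and apply Lemma~\ref{homabsorb}(1). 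Property (1) in $\m f$ then gives $t(c,c,b)\in C$ after pushing forward. The other two patterns are handled the same way. The same argument with $3$-generated subalgebras handles (2), since ternary absorption only involves triples. So $N=3$ suffices for items (1) and (2).

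For (3), an affine $\m b\in\vr v$ is a module-like algebra, and the identity $t(x,y,z)\approx x-y+z$ is ternary. It suffices to check it on $\Sg(x,y,z)$, which is affine and a quotient $\m f_{\vr v}(3)/\theta$. Affine quotients of subalgebras of $\m f$ satisfy it by Theorem~\ref{universalopMTA}(3), since here $\m b$ is $\m f$ with $\theta$ a congruence giving an affine quotient. The expression $x-y+z$ in $\Sg(x,y,z)$ agrees with that of $\m b$, because the Maltsev operation of an affine algebra is a term, and on a subalgebra it is the restricted one.

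The main obstacle I expect is making sure the single term $t$ obtained from $\m f_{\vr v}(3)$ really serves all absorbing subuniverses simultaneously, including those arising after intersecting with subalgebras. This is exactly where I would rely on Theorem~\ref{universalopMTA} quantifying over all such $B$ in $\m f$.
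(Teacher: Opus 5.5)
Your proposal is correct and follows essentially the same route as the paper: take $t$ from Theorem~\ref{universalopMTA} applied to the finite, hence minimal Taylor, free algebra $\m f_{\vr v}(3)$, then handle items (1) and (2) by pulling $C\cap\Sg(\dots)$ back to $\m f$ along a surjection onto the $2$- or $3$-generated subalgebra (Lemma~\ref{homabsorb}(1)) and pushing the conclusion forward (Lemma~\ref{homabsorb}(2)), and handle item (3) by applying Theorem~\ref{universalopMTA}(3) to the affine quotient $\m f/\theta\cong\Sg(a,b,c)$. Your detour through $\m f_{\vr v}(2)$ in item (1) is harmless but unnecessary, since, as the paper does, you can map $\m f_{\vr v}(3)$ directly onto $\Sg(b,c)$ (e.g.\ $x,y\mapsto b$, $z\mapsto c$).
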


\begin{proof}
Since the variety $\vr w$ generated by $\m a$ is finitely generated, $\vr v$ is the class of finite algebras in $\vr w$ and contains all finitely generated free algebras in $\vr w$. In particular $\m f:=\m f_{\vr w}(x,y,z)$ is in $\vr v$, and  by Theorem~\ref{minTvariety}, this means that $\m f$ is a minimal Taylor algebra, as well. We select the term $t(x,y,z)$ which is provided by Theorem~\ref{universalopMTA} for the minimal Taylor algebra $\m f$. 

Now let $\m b\in \vr v$ and $C\lhd_2 \m b$. Pick any $b\in B$ and $c\in C$ and denote $B_1:=\Sg(b,c)$ and $C_1:=B_1\cap C$. $\m c_1$ is a homomorphic image of $\m f$ by some homomorphism $\varphi$ (for example, select $\varphi(x)=\varphi(y)=b$ and $\varphi(z)=c$). If $C_2:=\varphi^{-1}(C_1)$, then by Lemma~\ref{homabsorb} (1) we get $C_2\lhd_2\m f$. Using the properties of $t$ from Theorem~\ref{universalopMTA} (1) and Lemma~\ref{homabsorb} (2), we obtain that $C_1\lhd_2 \m b_1$ via each of the terms $t(x,x,y)$, $t(x,y,x)$ and $t(y,x,x)$. Hence $t(b,b,c)$, $t(b,c,b)$, $t(c,b,b)$, $t(c,c,b)$, $t(c,b,c)$ and $t(b,c,c)$ are all in $C_1\subseteq C$, as needed to complete the proof of (1).

The proof of (2) for $t$ is analogous to the proof of (1), with the following changes: We select $c_1,c_2\in C$ and $b\in B$, so $B_1:=\Sg(b,c_1,c_2)$ and $C_1:=C\cap B_1$. The surjective homomorphism $\varphi:\m f\rightarrow\m b_1$ can be determined by $\varphi(x)=b$, $\varphi(y)=c_1$ and $\varphi(z)=c_2$, for example. For $C_2:=\varphi^{-1}(C_1)$, we get $C_2\lhd_3\m f$ and this absorption can be witnessed by $t(x,y,z)$ so Lemma~\ref{homabsorb} (2) implies that $t$ witnesses $C_1\lhd_3\m b_1$. Hence, $t(c_1,c_2,b)$, $t(c_1,b,c_2)$ and $t(b,c_1,c_2)$ are all in $C_1\subseteq C$, as desired for (2).

To prove (3), let $a,b,c\in B$ and we need to prove that $t(a,b,c)=a-b+c$. Let $\m c:=\Sg(a,b,c)$, let $\varphi:\m f\rightarrow \m c$ be given by $\varphi(x)=a$,  $\varphi(y)=b$ and $\varphi(z)=c$. Denote by $\theta:=\ker\varphi$. Since $\m f/\theta\cong\m c$, $\m f/\theta$ is affine, and by Theorem~\ref{universalopMTA} (3), $t([x]_\theta,[y]_\theta,[z]_\theta)=[x]_\theta-[y]_\theta+[z]_\theta$. Applying the isomorphism $\varphi$, we obtain $t(a,b,c)=a-b+c$, as desired.
\end{proof}

\section{Edge Axioms}\label{sec:EA}

In this section we fix a pseudovariety $\vr v$ generated by a finite minimal Taylor algebra, and define the directed graphs $as(\m a)$ and $sm(\m a)$ for all algebras $\m a\in \vr v$.

We don't define our edges explicitly. Instead, we list several properties we call {\em Edge Axioms}, and any assignment of pairs of directed graphs each minimal Taylor algebra in $\vr v$ which satisfies these axioms are our edge-colored graphs.

Let $\vr v$ be a pseudovariety of minimal Taylor algebras generated by a single finite minimal Taylor algebra. For each $\m a\in \vr v$, fix $as(\m a)=(A;\rightarrow_{as})$ and $sm(\m a)=(A;\rightarrow_{sm})$, two directed graphs whose vertex set is the universe of $\m A$. We denote $\rightarrow_s:=\rightarrow_{as}\cap \rightarrow_{sm}$, $s(\m a):=(A;\rightarrow_s)$, and $asm(\m a):=(A;\rightarrow_{as}\cup \rightarrow_{sm})$. We say that the class of triples $\{(\m a,as(\m a),sm(\m a)):\m a\in \vr v\}$ is a (finitely generated) pseudovariety of minimal Taylor algebras with colored edges if the following {\em edge axioms} are satisfied:

\begin{bax}\label{ax1}
Let $\m a\in\vr v$, and let $\m b\leq \m a$ be an affine subalgebra. Then for all $a,b\in B$, $a\rightarrow_{as}b$.
\end{bax}

\begin{bax}\label{ax2}
Let $\m a\in \vr v$, and let $\m b\leq \m a$ be a subalgebra with a majority term. Then for all $a,b\in B$, $a\rightarrow_{sm}b$.
\end{bax}

\begin{bax}\label{ax3}
Let $\m a\in \vr v$, and for $a,b\in A$ let there exist a term $t(x,y)$ such that $t$ acts on the set $\{a,b\}$ as a semilattice operation with the absorbing element $b$. Then $a\rightarrow_s b$.
\end{bax}

\begin{hax}
Let $\m a,\m b\in \vr v$ and let $f:\m a\rightarrow \m b$ be a homomorphism. If $a\rightarrow_{as}b$ in $as(\m a)$ (respectively, $a\rightarrow_{sm}b$ in $sm(\m a)$), then $f(a)\rightarrow_{as}f(b)$ in $as(\m b)$ (respectively, $f(a)\rightarrow_{sm}f(b)$ in $sm(\m b)$). 
\end{hax}

In other words, Homomorphism Axiom 1 claims that each algebraic homomorphism between algebras in $\vr v$ is also a graph homomorphism of their as-graphs and their sm-graphs.

\begin{hax}
Let $\m a,\m b\in \vr v$  and let $f:\m a\rightarrow \m b$ be a homomorphism. If 
\[
\begin{gathered}
f(a)\rightarrow_{as}f(b)\text{ in }as(\m b),\, f(a)\rightarrow_{sm}f(b)\text{ in }sm(\m b),\\
\text{or }f(a)\rightarrow_s f(b)\text{ in }s(\m b),
\end{gathered}
\] 
then, for every $b'\in A$ such that $f(b')=f(b)$ and $\Sg(\{a,b'\})$ is minimal among $\{\Sg(\{a,c\}):c\in A\text{ and }f(c)=f(b)\}$, we have
\[a\rightarrow_{as}b'\text{ in }as(\m a),\, a\rightarrow_{sm}b'\text{ in }sm(\m a),\text{ or }a\rightarrow_s b'\text{ in }s(\m a),\] respectively.
\end{hax}

\begin{rax}
Let $\m a,\m b\in \vr v$ and let $R\leq \m a\times \m b$. If 
\[
a_1\rightarrow_{as} a_2\text{ in }\m a,\, b_1\rightarrow_{sm}b_2\text{ in } \m b,\, (a_1,b_2)\in R\text{ and }(a_2,b_1)\in R,
\]
then $(a_2,b_2)\in R$.
\end{rax}

\begin{rax}
Let $\m a,\m b\in \vr v$ and let $R\leq \m a\times \m b$. If 
\[
\begin{gathered}
a_1\rightarrow_{as} a_2\text{ in }\m a,\, b_1\rightarrow_{as}b_2\text{ in } \m b,\\ 
(a_1,b_1)\in R,\, (a_1,b_2)\in R\text{ and }(a_2,b_1)\in R,
\end{gathered}
\] 
then $(a_2,b_2)\in R$.
\end{rax}

\begin{rax}
Let $\m a,\m b,\m c\in \vr v$ and let $R\leq \m a\times \m b\times \m c$. If \[
\begin{gathered}
a_1\rightarrow_{sm} a_2\text{ in }\m a,\, b_1\rightarrow_{sm}b_2\text{ in } \m b,\, c_1\rightarrow_{sm}c_2\text{ in } \m c, \\
(a_1,b_2,c_2)\in R,\, (a_2,b_1,c_2)\in R\text{ and }(a_2,b_2,c_1)\in R,
\end{gathered}
\] 
then $(a_2,b_2,c_2)\in R$.
\end{rax}

The following easy consequence of the Edge Axioms justifies why, in our notation $\rightarrow_s$, $\rightarrow_{as}$ etc., we don't specify the algebra.

\begin{prp}\label{sameinsubalg}
Let $\{(\m a,as(\m a),sm(\m a)):\m a\in \vr v\}$ be a finitely generated pseudovariety of minimal Taylor algebras with colored edges, $\m a\in \vr v$, $\m b\leq \m a$ and $a,b\in B$. Then $a\rightarrow_{as}b$ in $as(\m a)$ iff $a\rightarrow_{as}b$ in $as(\m b)$ and $a\rightarrow_{sm}b$ in $sm(\m a)$ iff $a\rightarrow_{sm}b$ in $sm(\m b)$.
\end{prp}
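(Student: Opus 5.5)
The plan is to prove both equivalences with the inclusion homomorphism $\iota:\m b\rightarrow\m a$, which lies in $\vr v$ because $\vr v$ is closed under subalgebras. Each equivalence splits into two directions, and the two colors are handled in exactly the same way. So I describe the argument for $\rightarrow_{as}$ only, and the same reasoning, word for word, gives the $\rightarrow_{sm}$ case.

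\emph{From $\m b$ to $\m a$.} Assume $a\rightarrow_{as}b$ holds in $as(\m b)$. By Homomorphism Axiom 1 applied to $\iota$, we get $\iota(a)\rightarrow_{as}\iota(b)$ in $as(\m a)$. Since $\iota(a)=a$ and $\iota(b)=b$, this is exactly $a\rightarrow_{as}b$ in $as(\m a)$.

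\emph{From $\m a$ to $\m b$.} Assume $a\rightarrow_{as}b$ holds in $as(\m a)$. Because $\iota$ is injective, this is the same as $\iota(a)\rightarrow_{as}\iota(b)$ in $as(\m a)$. We now invoke Homomorphism Axiom 2 with $f=\iota$, domain $\m b$ and target $\m a$. For this we must check its hypothesis on $b'$.
\begin{itemize}
\item[(i)] By injectivity, the only $c\in B$ with $\iota(c)=\iota(b)$ is $c=b$.
\item[(ii)] Hence the family $\{\Sg(\{a,c\}):c\in B,\ \iota(c)=\iota(b)\}$ has the single member $\Sg(\{a,b\})$.
\item[(iii)] So $b'=b$ is trivially minimal in this family.
\end{itemize}
Homomorphism Axiom 2 then gives $a\rightarrow_{as}b$ in $as(\m b)$.

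The only point needing care is the second direction. Homomorphism Axiom 1 transports edges only forward along homomorphisms, so reflecting an edge back into the subalgebra requires Homomorphism Axiom 2. Injectivity of $\iota$ makes its minimality condition vacuous, so there is no real obstacle.
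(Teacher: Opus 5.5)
Your proposal is correct and follows the paper's proof: the inclusion map $\m b\rightarrow\m a$ used as a homomorphism, Homomorphism Axiom 1 for transferring edges from $\m b$ up to $\m a$, and Homomorphism Axiom 2 for the converse. As in the paper, injectivity forces $b'=b$, so the minimality condition in that axiom holds trivially.
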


\begin{proof}
Consider the identity embedding as a homomorphism $f:\m b\rightarrow \m a$. Then Homomorphism Axiom 1 immediately gives direction $(\Leftarrow)$, while Homomorphism Axiom 2 implies the direction $(\Rightarrow)$, considering that $f(b)=f(b')$ implies $b=b'$ (since $f$ is the identity map).
\end{proof}

\section{Graphs which satisfy the edge axioms exist}

In this section we will prove that, in the case of minimal Taylor algebras, A. Bulatov's edges, defined in \cite{BulatovGraph1}, satisfy our edge axioms. Next we will define pairs of directed graphs in a different way, and these graphs will also satisfy the edge axioms on a finitely generated pseudovariety of minimal Taylor algebras.

\subsection{Bulatov's edges satisfy Edge Axioms}

First of all, A. Bulatov's edges are defined for ``smooth" algebras, so first we have to verify that minimal Taylor algebras are ``smooth".

\begin{df}\label{smoothdef}
Let $\m a$ be a finite Taylor algebra. $\m a$ is {\em smooth} if, for any similar algebra $\m b$, any $a,b\in A$ and any homomorphism $\varphi:\Sg^{\m a}(a,b)\rightarrow \m b$, if either
\begin{enumerate}
\item[(a)] there exists a term $t(x,y)$ such that $(\{\varphi(a),\varphi(b)\};t^{\m b})$ is a 2-element semilattice, or 
\item[(b)] there exists a term $m(x,y,z)$ such that $(\{\varphi(a),\varphi(b)\};m^{\m b})$ is a 2-element majority algebra,
\end{enumerate}
then $\{x\in A:\varphi(x)\in\{\varphi(a),\varphi(b)\}\}$ is a subuniverse of $\m a$.
\end{df}

\begin{rmk}
Note that, in his definition of ``thick edges", A. Bulatov needed to consider the case when $\{\varphi(a),\varphi(b)\}$ had one operation which acted as a semilattice and another which acted as the majority operation. This case can't occur in minimal Taylor algebras, since if $\{\varphi(a),\varphi(b)\}$ has a semilattice operation, then it is a minimal Taylor algebra which has a 2-absorbing subuniverse, say $\{\varphi(b)\}$. Thus, by Theorem~\ref{2absstrproj}, $\{\varphi(b)\}$ is also a strongly projective subuniverse and the equations $m(\varphi(a),\varphi(a),\varphi(b))=m(\varphi(a),\varphi(b),\varphi(a))=m(\varphi(b),\varphi(a),\varphi(a))=\varphi(a)$ become impossible for any idempotent term $m(x,y,z)$.

Also note the following: If we denote by $C:=[a]_{\ker\varphi}$ and $D:=[b]_{\ker\varphi}$ in either of the cases (a) and (b) considered by Definition~\ref{smoothdef}, then $C,D, C\cup D=\Sg(a,b)$ are all subuniverses of $\m a$ and that $D\lhd_2(C\cup D)$ if case (a) applies and $t(\varphi(a),\varphi(b))=t(\varphi(b),\varphi(a))=\varphi(b)$, while $C\lhd_3(C\cup D)$ and $D\lhd_3(C\cup D)$ if case (b) applies.
\end{rmk}

\begin{prp}\label{minTsmooth}
Any finite minimal Taylor algebra is smooth.
\end{prp}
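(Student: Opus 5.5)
The plan is to reduce both cases of Definition~\ref{smoothdef} to Theorem~\ref{subuniversesminTaylor}, applied not to $\m a$ but to the image of $\Sg^{\m a}(a,b)$ under $\varphi$.

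Fix a finite minimal Taylor algebra $\m a$, elements $a,b\in A$, and a homomorphism $\varphi:\m c\rightarrow\m b$, where $\m c:=\Sg^{\m a}(a,b)$.

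\emph{Step 1: the image is minimal Taylor.} Let $\m d:=\varphi(\m c)\le \m b$. Then $\m d\cong \m c/\ker\varphi$ is a homomorphic image of a subalgebra of $\m a$. So $\m d$ lies in the pseudovariety generated by $\m a$, and by Theorem~\ref{minTvariety} it is a minimal Taylor algebra. Moreover $D=\Sg^{\m b}(\varphi(a),\varphi(b))$, because a homomorphism maps the subalgebra generated by $a,b$ onto the subalgebra generated by their images.

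\emph{Step 2: the two-element set is a subuniverse of $\m d$.} Put $E:=\{\varphi(a),\varphi(b)\}\subseteq D$.
\begin{itemize}
\item In case (a), $E$ is closed under $t^{\m d}$, and $(E;t^{\m d}|_E)$ is a two-element semilattice. A semilattice operation is idempotent and satisfies $t(x,y)\approx t(y,x)$, which no projection on a set with at least two elements satisfies. Hence $t$ is a Taylor term of $(E;t|_E)$, and this is a Taylor algebra.
\item In case (b), $(E;m|_E)$ is a two-element majority algebra. The majority identities are likewise not satisfiable by projections, so this is again a Taylor algebra.
\end{itemize}
In both cases Theorem~\ref{subuniversesminTaylor}, applied to the minimal Taylor algebra $\m d$ with the term $t$ (respectively $m$), shows that $E$ is a subuniverse of $\m d$.

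\emph{Step 3: conclude.} By Step 1, $D$ is the smallest subuniverse of $\m d$ containing $\varphi(a)$ and $\varphi(b)$, and by Step 2, $E$ is such a subuniverse, so $D=E$. Therefore every $x$ in the domain of $\varphi$ satisfies $\varphi(x)\in E$, that is,
\[
\{x:\varphi(x)\in\{\varphi(a),\varphi(b)\}\}=C=\Sg^{\m a}(a,b),
\]
which is a subuniverse of $\m a$. (Alternatively: the preimage of the subuniverse $E$ of $\m d$ under $\varphi$ is a subuniverse of $\m c$, and hence of $\m a$, since $\m c\le\m a$.)

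The only point requiring care is Step 2: one must check that Theorem~\ref{subuniversesminTaylor} is applied in $\m d$, whose minimal Taylor property comes from Theorem~\ref{minTvariety}, rather than in the arbitrary algebra $\m b$. Beyond that, the argument only uses that two-element semilattices and two-element majority algebras are Taylor.
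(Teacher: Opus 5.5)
Your proof is correct and follows the paper's argument. Like the paper, you pass to the image $\varphi(\Sg^{\m a}(a,b))$, which is minimal Taylor by Theorem~\ref{minTvariety}, apply Theorem~\ref{subuniversesminTaylor} there to see that $\{\varphi(a),\varphi(b)\}$ is a subuniverse, and pull back along $\varphi$. Your extra observation, that this forces $\varphi(\Sg^{\m a}(a,b))=\{\varphi(a),\varphi(b)\}$ so the preimage is all of $\Sg^{\m a}(a,b)$, is also noted in the paper's remark after the definition of smoothness ($C\cup D=\Sg(a,b)$).
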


\begin{proof}
Let $\m c$ be the subalgebra of $\m b$ whose universe is $\varphi(\Sg^{\m a}(a,b))$. According to Theorem~\ref{minTvariety}, both $\Sg^{\m a}(a,b)$ and $\m c$ are minimal Taylor algebras. From the fact that the semilattice operation and the majority operation are both Taylor operations and Theorem~\ref{subuniversesminTaylor} follows that $\{\varphi(a),\varphi(b)\}$ is a subuniverse of $\m c$. Therefore, the inverse image $\varphi^{-1}(\{\varphi(a),\varphi(b)\})$ is a subuniverse of $\Sg^{\m a}(a,b)$, so $\m a$ is smooth.
\end{proof}

What follows are A. Bulatov's definitions of the three types of directed edges in smooth algebras. To distinguish them from the edges that satisfy our Edge Axioms, we will denote A. Bulatov's edges by $\rightarrow_s^B$, $\rightarrow_a^B$ and $\rightarrow_m^B$ for the semilattice, affine and majority edges, respectively.

\begin{df}\label{Bulsedgedef}
Let $\m a$ be a minimal Taylor algebra and $a,b\in A$ a pair of distinct elements. We say that $ab$ is a \emph{semilattice edge} (in the sense of Bulatov), and write $a\rightarrow_s^B b$, if there exists a term $t(x,y)$ such that $(\{a,b\};t)$ is a 2-element semilattice such that $t(a,b)=t(b,a)=b$.
\end{df}

We note that the semilattice edges $\rightarrow_s^B$ in A. Bulatov's smooth algebras depend on the choice of a binary operation $t$ (which Bulatov stipulates to have additional properties), in particular Bulatov claims that the orientation of a semilattice edge depends on this choice. In minimal Taylor algebras, though, a simple application of Proposition~\ref{minTsmooth} and Theorem~\ref{2absstrproj} gives

\begin{cor}\label{sedge2abs}
Let $\m a$ be a minimal Taylor algebra and $a,b\in A$ two distinct elements. Then the following are equivalent:
\begin{enumerate}
\item $a\rightarrow_s^B b$.
\item $\{a,b\}\in \Sub{a}$ and $\{b\}\lhd_2\{a,b\}$.
\item $\{a,b\}\in \Sub{a}$ and $\{b\}$ is a strongly projective subuniverse of $\{a,b\}$.
\end{enumerate}
\end{cor}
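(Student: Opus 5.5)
We prove the cycle of implications (1)$\Rightarrow$(2)$\Rightarrow$(3)$\Rightarrow$(1).

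For (1)$\Rightarrow$(2), assume $a\rightarrow_s^B b$, witnessed by a binary term $t(x,y)$ acting on $\{a,b\}$ as a semilattice with absorbing element $b$. First we show $\{a,b\}\in\Sub{a}$. Apply Proposition~\ref{minTsmooth} with $\m b:=\Sg^{\m a}(a,b)$ and $\varphi$ the identity homomorphism. Case (a) of Definition~\ref{smoothdef} holds, so $\varphi^{-1}(\{a,b\})=\{a,b\}$ is a subuniverse of $\Sg^{\m a}(a,b)$, hence of $\m a$. (Directly: the semilattice operation is Taylor, so Theorem~\ref{subuniversesminTaylor} applies.) Then $\{a,b\}$ is a two-element subalgebra, which is minimal Taylor by Theorem~\ref{minTvariety}. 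On it, $t(x,b)=t(b,x)=b$ for $x\in\{a,b\}$. Thus $t$ witnesses that the subset $\{b\}$ is 2-absorbing, and $\{b\}$ is a subuniverse by idempotence. So $\{b\}\lhd_2\{a,b\}$.

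For (2)$\Rightarrow$(3), we work in the minimal Taylor algebra $\{a,b\}$, which is minimal Taylor by Theorem~\ref{minTvariety}. Theorem~\ref{2absstrproj} then gives (1)$\Leftrightarrow$(3) of that theorem for the set $\{b\}$, which is exactly what is needed.

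For (3)$\Rightarrow$(1), the two-element algebra $\{a,b\}$ is Taylor, so it has some non-projection term operation. Since $\m a$ is idempotent, a non-projection binary term must exist: a two-element idempotent clone with no binary non-projection consists of projections or is generated by majority or minority operations. A majority or minority operation would contradict strong projectivity of $\{b\}$, since then $m(a,a,b)=a$ while the third coordinate is essential. So the clone contains a binary non-projection $s(x,y)$. Because $s$ is not a projection, both coordinates are essential. Strong projectivity then gives $s(a,b)=s(b,a)=b$, and idempotence gives $s(a,a)=a$ and $s(b,b)=b$. Hence $s$ is a semilattice operation on $\{a,b\}$ with absorbing element $b$, which is $a\rightarrow_s^B b$. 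This last step, producing a binary non-projection term, is the main obstacle. Alternatively, it follows from the remark after the definition of projectivity that binary non-projections witness 2-absorption. If that fact is unavailable, one instead invokes Post's lattice for two-element idempotent Taylor clones.
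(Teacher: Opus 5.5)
Your argument is correct. For (1)$\Rightarrow$(2) and (2)$\Rightarrow$(3) it is exactly what the paper intends: Proposition~\ref{minTsmooth} (or Theorem~\ref{subuniversesminTaylor}) makes $\{a,b\}$ a subuniverse, the semilattice term witnesses $\{b\}\lhd_2\{a,b\}$, and Theorem~\ref{2absstrproj}, applied in the minimal Taylor algebra $\{a,b\}$, turns 2-absorption into strong projectivity.

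You diverge in closing the cycle. The paper uses the converse direction of Theorem~\ref{2absstrproj}: (3) gives $\{b\}\lhd_2\{a,b\}$. Any binary witness $t$ of this absorption satisfies $t(a,b)=t(b,a)=b$, and with idempotence that is a semilattice operation with absorbing element $b$. So (3)$\Rightarrow$(2)$\Rightarrow$(1) takes one line. You had already invoked Theorem~\ref{2absstrproj}, so your detour through Post's lattice is unnecessary. Its only gain is that this direction needs just that $\{a,b\}$ is an idempotent Taylor algebra, not minimality. Even for that purpose Post's lattice is heavier than needed. If $c$ is a cyclic term, all of its coordinates are essential on the nontrivial algebra $\{a,b\}$. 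Then $s(x,y):=c(x,y,\dots,y)$ is a binary term with $s(a,b)=s(b,a)=b$ by strong projectivity. This is the trick used in Proposition~\ref{universalmeet}.

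There is one small slip in your case analysis. For a minority operation $m$ on $\{a,b\}$ you have $m(a,a,b)=b$, not $a$, so that evaluation gives no contradiction. Use $m(a,b,b)=a$ instead: its second coordinate is essential and lies in $\{b\}$.
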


The truth of condition (3) above clearly depends only on the minimal Taylor algebra, not on any choice of term operations, since the strong projectivity is realized by any cyclic term of $\m a$ (all variables of a cyclic term are essential).

Also, Bulatov's edges $\rightarrow_s^B$ are really the only possible edges in pseudovarieties of minimal Taylor algebras, as seen in the next lemma.

\begin{lm}\label{therecanbonly1sedge}
Let $\vr v$ be any finitely generated pseudovariety of minimal Taylor algebras with pairs of digraphs such that the class $\{(\m a,\rightarrow_{as},\rightarrow_{sm}):\m a \in \vr v\}$ satisfies Edge Axioms. If $\m a \in \vr v$, $a,b\in A$ then $a\rightarrow_s b$ iff $a\rightarrow
_s^B b$.
\end{lm}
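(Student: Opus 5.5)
The plan is to prove the two directions separately. The direction ($\Leftarrow$) is immediate from the axioms, and the direction ($\Rightarrow$) needs one application of Relational Axiom 1 to a suitably generated binary relation. Throughout, $a$ and $b$ are taken distinct. Bulatov's edges are only defined for distinct elements, while $a\rightarrow_s a$ always holds: Base Axioms 1 and 2 applied to the one-element subalgebra $\{a\}$, which is both affine and has a majority term, give $a\rightarrow_{as}a$ and $a\rightarrow_{sm}a$.

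For ($\Leftarrow$), assume $a\rightarrow_s^B b$. By Definition~\ref{Bulsedgedef} there is a binary term $t(x,y)$ such that $(\{a,b\};t)$ is a two-element semilattice with $t(a,b)=t(b,a)=b$. This is exactly the hypothesis of Base Axiom 3, which yields $a\rightarrow_s b$.

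For ($\Rightarrow$), assume $a\rightarrow_s b$, that is, $a\rightarrow_{as}b$ and $a\rightarrow_{sm}b$. I will consider the binary relation
\[R:=\Sg^{\m a^2}(\{(a,b),(b,a)\})\leq \m a\times\m a.\]
I then apply Relational Axiom 1 with $\m a$ in both coordinates, taking $a_1=a$, $a_2=b$ (using $a\rightarrow_{as}b$) and $b_1=a$, $b_2=b$ (using $a\rightarrow_{sm}b$). The two hypotheses $(a_1,b_2)=(a,b)\in R$ and $(a_2,b_1)=(b,a)\in R$ hold by construction, so the axiom gives $(b,b)\in R$.

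Since $R$ is generated by two pairs, there is a binary term $t(x,y)$ with
\[t\big((a,b),(b,a)\big)=(b,b),\]
that is, $t(a,b)=b$ and $t(b,a)=b$. By idempotence we also have $t(a,a)=a$ and $t(b,b)=b$. Hence $t$ restricted to $\{a,b\}$ is well defined and is the two-element semilattice operation with absorbing element $b$. This is precisely Definition~\ref{Bulsedgedef}, so $a\rightarrow_s^B b$.

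If one also wants condition (2) of Corollary~\ref{sedge2abs} explicitly, it follows as well. The set $\{a,b\}$ is closed under $t$, and $(\{a,b\};t)$ is a Taylor algebra, so Theorem~\ref{subuniversesminTaylor} shows that $\{a,b\}$ is a subuniverse of $\m a$.

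The only step needing an idea is the choice of $R$ and the matching of the roles in Relational Axiom 1. The as-edge must be placed in the first coordinate and the sm-edge in the second, with $R$ generated by the two "crossed" pairs, so that the conclusion $(a_2,b_2)$ is the diagonal pair $(b,b)$. Everything else is routine.
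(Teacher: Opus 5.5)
Your proof is correct and follows the paper's own argument: Relational Axiom 1 applied to $\Sg((a,b),(b,a))$ gives $(b,b)$ and hence a semilattice term for ($\Rightarrow$), and Base Axiom 3 gives ($\Leftarrow$). The differences are cosmetic: the paper passes to $\Sg(a,b)$ to make the relation subdirect and invokes Theorem~\ref{subuniversesminTaylor} before Base Axiom 3, neither of which is needed as the axioms are stated. Your remark that the equivalence must be read for $a\neq b$ (since $a\rightarrow_s a$ always holds by Base Axioms 1 and 2) is a correct caveat that the paper leaves implicit.
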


\begin{proof}
If $a\rightarrow_s b$, we apply Relational Axiom 1 to $B=\Sg(a,b)$ and $R:=\Sg^{\sm a}((a,b),(b,a))$ (in order to have $\m r\leq_{sd}\m b\times\m b$), and we get $(b,b)\in R$. Hence, there exists a binary term $t(x,y)$ which acts on $\{a,b\}$ as the semilattice with absorbing element $b$, i.e., $a\rightarrow_s^B b$. 

On the other hand, from $a\rightarrow_s^B b$ and Theorem~\ref{subuniversesminTaylor}, follows that $\{a,b\}$ is a subuniverse of $\m a$ which is a two-element semilattice with absorbing element $b$, so Base Axiom 3 implies $a\rightarrow_s b$.
\end{proof}

Before we define the majority edges $\rightarrow_m^B$ and affine edges $\rightarrow_a^B$, we need the following definitions.

\begin{df}\label{majconddef}
A term operation $t(x,y,z)$ is said to satisfy the {\em majority condition} on a minimal Taylor algebra $\m a$ if
\begin{enumerate}
\item[(a)] For any $a,b\in A$ and any surjective homomorphism $\varphi$ which maps $\Sg^{\m a}(a,b)$ onto a two-element majority algebra, $t$ interprets as the majority operation on $\{\varphi(a),\varphi(b)\}$, and
\item[(b)] $\m a\models t(x,t(x,y,y),t(x,y,y))\approx t(x,y,y)$.
\end{enumerate}
\end{df}

\begin{df}\label{minconddef}
A term operation $t(x,y,z)$ is said to satisfy the {\em minority condition} on a minimal Taylor algebra $\m a$ if
\begin{enumerate}
\item[(a)] For any $a,b\in A$ and any surjective homomorphism $\varphi$ which maps $\Sg^{\m a}(a,b)$ onto an affine algebra $\m b$, $t(x,y,z)$ interprets as the operation $x-y+z$ on $\m b$, and
\item[(b)] $\m a\models t(t(x,y,y),y,y)\approx t(x,y,y)$.
\end{enumerate}
\end{df}

Note that, because of idempotence, the surjectivity of the homomorphism $\varphi$ implies that $\varphi(a)\neq\varphi(b)$. In the case of the majority condition, this implies that $\{\varphi(a),\varphi(b)\}$ is the whole $\varphi$-image of $\Sg(a,b)$.

We recall following result of A. Bulatov from \cite{BulatovGraph1}:

\begin{thm}[Corollary 22 and Lemma 23 of \cite{BulatovGraph1}]\label{Bunified}
Let $\vr k$ be a finite class of finite smooth algebras. Then there
exist ternary term operations $g$ and $h$ of $\vr k$ such that $g$ satisfies the majority condition and $h$ satisfies the minority condition on all algebras in $\vr k$. Moreover, we can find such $g$ and $h$ which, besides satisfying the majority and the minority condition, respectively, for every $\m a\in \vr k$ and $a,b\in A$ they satisfy the following:
\begin{enumerate}
\item[(a)] If $\varphi$ maps $\Sg(a,b)$ onto a two-element meet semilattice, then both $g(x,y,z)$ and $h(x,y,z)$ act on $\varphi(\Sg(a,b))$ as $x\mt y\mt z$.
\item[(b)] If $\varphi$ maps $\Sg(a,b)$ onto the two-element majority algebra, then $h$ acts on $\varphi(\Sg(a,b))$ as the first projection.
\item[(c)] If $\varphi$ maps $\Sg(a,b)$ onto an affine algebra, then $g$ acts on $\varphi(\Sg(a,b))$ as the first projection.
\end{enumerate}
\end{thm}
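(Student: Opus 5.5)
The plan is to build $g$ and $h$ in three stages. First, realize the prescribed behaviour on each individual two-element or affine quotient. Next, merge these finitely many local terms into one term by an inductive composition argument. Finally, enforce the identities (b) of Definitions~\ref{majconddef} and \ref{minconddef} by iterating the binary parts. Since $\vr k$ is finite and each algebra is finite, there are only finitely many triples $(\m a,\{a,b\},\varphi)$ with $\varphi$ mapping $\Sg(a,b)$ onto a two-element semilattice, a two-element majority algebra, or an affine algebra, up to isomorphism of the image. Call these quotients $\m f_1,\dots,\m f_N$. A term's action on $\varphi(\Sg(a,b))$ is just its action on the corresponding $\m f_i$, so it suffices to find terms with the prescribed action on every $\m f_i$ simultaneously.

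\textbf{Local analysis of the ternary idempotent operations on each type.} On a two-element majority-type quotient (no semilattice term; when $\vr k$ consists of minimal Taylor algebras this is excluded by the Remark after Definition~\ref{smoothdef}), every idempotent ternary term operation is self-dual. Hence it is a projection, the majority operation, or the minority operation. On a two-element semilattice every idempotent ternary term operation is the meet of the variables it depends on. On an affine quotient every ternary term operation is $r_1x+r_2y+r_3z$ with $r_1+r_2+r_3=1$.

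\textbf{Merging.} I would proceed by induction on the list of majority-type quotients. Suppose $t$ already acts as majority on $\m f_1,\dots,\m f_k$, and $s$ is a term acting as majority on $\m f_{k+1}$; such an $s$ exists by definition of the type. Put
\[
t'(x,y,z):=t\bigl(s(x,y,z),s(y,z,x),s(z,x,y)\bigr).
\]
On $\m f_{k+1}$ all three arguments equal the majority, so by idempotence $t'$ is majority there. On $\m f_i$ with $i\le k$, the operation $s$ is a projection, the majority, or the minority. If $s$ is a projection, the three arguments are a cyclic permutation of $(x,y,z)$, and $t$ is majority. If $s$ is the majority, all three arguments coincide. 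If $s$ is the minority, a majority applied to the three cyclic minorities is again a self-dual operation. The minority case must be checked by hand; I expect it to give majority or minority, and if it gives minority I would precompose $t$ suitably to repair it.

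The same scheme produces $h$ with $x-y+z$ on all affine quotients. In the affine case I would first find a term that is not a projection on a given affine quotient. Then, from its binary part $p(x,y)=t(x,y,y)=rx+(1-r)y$, I would take $p^{\omega}$, the idempotent power, to adjust coefficients until the operation becomes $x-y+z$. Composing in the same cyclic fashion handles the other affine quotients.

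\textbf{Side conditions and the identities (b).} For (a), meet on semilattice quotients, I would precompose with a cyclic term of large prime arity. By Theorem~\ref{Taylor=cyclic}, all variables of such a term are essential on each semilattice quotient, and in the merging it suffices that every variable is essential there. For (b) and (c), projection on the opposite type, I would replace $g$ by $g(x,h'(x,y,z),h'(x,z,y))$-style compositions with a term that is the first projection on the unwanted type. Finally, for the identity $t(t(x,y,y),y,y)\approx t(x,y,y)$ in the minority condition, I would replace $t$ by $t_\omega(x,y,z):=p^{n!-1}(t(x,y,z),y)$, where $p(x,y)=t(x,y,y)$ and $n$ bounds the sizes of the free algebras involved. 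Then $p^{n!}$ is idempotent under composition, while on each quotient $p$ acts as the identity in $x$, so the quotient behaviour is unchanged. The majority identity $t(x,t(x,y,y),t(x,y,y))\approx t(x,y,y)$ is handled analogously by iterating $q(x,y)=t(x,y,y)$ in the second slot.

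The main obstacle is making the merging step robust on a single term. Each new composition must be shown not to destroy the already-achieved behaviour on the other types, in particular on affine quotients, where the composed coefficients can drift away from $x-y+z$, and in the self-dual minority subcase above. I would resolve this by always finishing with an $\omega$-iteration, which restores the normal forms on every quotient simultaneously.
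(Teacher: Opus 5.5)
\textbf{There is a genuine gap, and it sits exactly at the merging step you flag.} The $\omega$-iteration you propose as the remedy cannot close it. The paper does not prove this statement itself; it imports it from Bulatov. So your argument has to stand on its own. The reduction to finitely many quotients, the semilattice bookkeeping, and enforcing the two identities by iterating $t(x,y,y)$ in the appropriate slot at the end are fine.

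\emph{Affine quotients.} The cyclic composition does not preserve $x-y+z$. Suppose that on $\m f_{k+1}$ the term $s$ acts as $x-y+z$ and $t$ acts as $\alpha x+\beta y+\gamma z$. Then $t(s(x,y,z),s(y,z,x),s(z,x,y))$ acts there as $(\alpha+\beta-\gamma)x+(\beta+\gamma-\alpha)y+(\alpha+\gamma-\beta)z$. For $(\mathbb Z_3;x-y+z)$, which is even minimal Taylor, and $t=2x+2y$, this is just $x$. On the earlier quotients the new term acts as $s(x,y,z)-s(y,z,x)+s(z,x,y)$, which is equally uncontrolled. Iterating the binary part $\alpha x+(1-\alpha)y$ only replaces $\alpha$ by an idempotent of the ring. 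At best you reach $t(x,y,y)=x$, i.e.\ $x+\gamma'(z-y)$ with $\gamma'$ arbitrary.

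\emph{Majority quotients.} Your list of idempotent self-dual ternary operations is incomplete. If the quotient's clone is generated by majority and minority (allowed for smooth algebras), it also contains $\mathrm{maj}(\neg x,y,z)$, $\mathrm{maj}(x,\neg y,z)$ and $\mathrm{maj}(x,y,\neg z)$. If $s$ is the minority on an earlier $\m f_i$, the arguments $s(x,y,z),s(y,z,x),s(z,x,y)$ coincide there. So the new term is the minority on $\m f_i$ whatever $t$ is, and no precomposition of $t$ helps. A twisted majority also yields the minority. For example, with $\m f_1=(\{0,1\};m,p)$ and $\m f_2=(\{0,1\};p,m)$, your procedure produces the minority on $\m f_1$. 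Since the binary parts of the minority are projections, no $\omega$-iteration changes this.

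\emph{Conditions (b) and (c).} The composition $g(x,h'(x,y,z),h'(x,z,y))$ collapses to $x$ on majority quotients where $h'$ is the first projection. On the other hand, an $h'$ that is the majority there and $\pi_1$ on affine quotients is already the $g$ you want. So (b) and (c) are not actually obtained.

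\textbf{What is missing is a global argument in place of pairwise merging.} For the affine part this is easy. The product of all affine quotients is finite, idempotent, Taylor (compose the cyclic terms of the factors) and abelian, hence affine by Theorem~\ref{affineeq}. Any Maltsev term operation of this product restricts on every factor to a Maltsev operation of an abelian algebra, i.e.\ to $x-y+z$.

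For the majority part, consider the minimal Taylor setting actually used in Corollary~\ref{majminexist}. There each majority quotient is a two-element minimal Taylor algebra whose clone is generated by the majority operation. Its ternary term operations are then only the majority and the projections, so your cyclic merge does work.

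Two cases still need a real argument: arbitrary smooth algebras with quotients of the self-dual kind, and making $g$ and $h$ the first projection on the opposite type while keeping the other requirements. That is precisely what the cited results of Bulatov supply.
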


We plan to use A. Bulatov's results about his edges mostly as blackbox here, but in order to do that we need to prove they apply to a pseudovariety of minimal Taylor algebras. Namely, Bulatov extended his edges from a single smooth algebra to a finite class of smooth algebras, as we saw in Theorem~\ref{Bunified}, while the pseudovariety in which we have defined our Edge Axioms is an infinite set of isomorphism types. So we need the following result.

\begin{thm}\label{majmincondV}
Let $\m a$ be a finite minimal Taylor algebra and $\vr v$ the pseudovariety of finite minimal Taylor algebras $\m a$ generates. If $|A|^2=n$, and $t(x,y,z)$ satisfies the majority (respectively, minority) condition on any algebra in $\mathsf{HS}(\m a^n)$, then $t(x,y,z)$ satisfies the majority (resp., minority) condition on any algebra $\m b\in\vr v$.

If $t(x,y,z)$ satisfies the additional conditions (a)-(c) of Theorem~\ref{Bunified} on $\mathsf{HS}(\m a^n)$, then it satisfies the same conditions throughout $\vr v$.
\end{thm}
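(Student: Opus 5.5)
The plan is to reduce everything to $2$-generated algebras. Every clause of the majority and minority conditions, and of the additional conditions (a)--(c) of Theorem~\ref{Bunified}, refers only to a subalgebra $\Sg^{\m b}(a,b)$ and homomorphisms defined on it, or to an identity in two variables $x,y$. So it suffices to show that every $2$-generated algebra of $\vr v$ lies, up to isomorphism, in $\mathsf{HS}(\m a^n)$ with $n=|A|^2$.

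\textbf{Step 1: the free algebra.} Let $\vr w$ be the variety generated by $\m a$. The free algebra $\m f:=\m f_{\vr w}(x,y)$ is isomorphic to the subalgebra of $\m a^{A^2}$ generated by the two projections $\pi_1,\pi_2:A^2\rightarrow A$. This is because two binary terms agree on $\m a$ iff they agree at every point of $A^2$. Since $|A^2|=n$, we get $\m f\in\mathsf S(\m a^n)$.

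\textbf{Step 2: $2$-generated members of $\vr v$.} Let $\m b\in\vr v$ and $a,b\in B$. Then $\m c:=\Sg^{\m b}(a,b)$ is a homomorphic image of $\m f$ under the map $x\mapsto a$, $y\mapsto b$. Hence $\m c\in\mathsf{HS}(\m a^n)$, and all the conditions are isomorphism invariant.

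\textbf{Step 3: condition (a) and the extra conditions.} Take $a,b\in B$ and a surjective $\varphi:\Sg^{\m b}(a,b)\rightarrow\m d$ onto a two-element majority algebra, an affine algebra, or a two-element semilattice. By Step~2, $\m c=\Sg^{\m b}(a,b)=\Sg^{\m c}(a,b)$ belongs to $\mathsf{HS}(\m a^n)$, and $t$ satisfies the hypothesis on $\m c$ for the pair $a,b$ and the same $\varphi$. Therefore $t$ acts on $\varphi(\m c)$ as required: as the majority operation, as $x-y+z$, as $x\mt y\mt z$, or as the first projection, according to the case. Because $t$ is a term, its interpretation in $\m c$ coincides with the restriction of $t^{\m b}$. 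This gives (a) of Definitions~\ref{majconddef} and \ref{minconddef}, and also (a)--(c) of Theorem~\ref{Bunified}, on $\m b$.

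\textbf{Step 4: condition (b).} The identities $t(x,t(x,y,y),t(x,y,y))\approx t(x,y,y)$ and $t(t(x,y,y),y,y)\approx t(x,y,y)$ are in two variables. Such an identity holds in $\m b$ iff it holds for every substitution $x\mapsto a$, $y\mapsto b$. Every such substitution takes place inside $\Sg^{\m b}(a,b)$, which lies in $\mathsf{HS}(\m a^n)$ by Step~2, where the identity holds by assumption.

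\textbf{Main obstacle.} The only genuinely nontrivial point is Step~1: justifying the exponent $n=|A|^2$ via the embedding of the two-generated free algebra into $\m a^{A^2}$. It requires checking that distinct binary term operations of $\m a$ are separated by the coordinates $(c,d)\in A^2$. Everything else is bookkeeping.
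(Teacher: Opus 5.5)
Your proof is correct and follows the same strategy as the paper. You reduce every clause to the $2$-generated subalgebra $\Sg^{\m b}(a,b)$, show that such algebras lie in $\mathsf{HS}(\m a^n)$, and treat condition (b) as an identity. The difference is in how you get the bound $n=|A|^2$. You use the standard embedding $\m f_{\vr w}(x,y)\cong\Sg^{\m a^{A^2}}(\pi_1,\pi_2)$. The paper instead lifts $a,b$ to a subpower $\m a^k$ and prunes coordinates that carry repeated pairs $(a'(i),b'(i))$. That is the same observation in a more hands-on form, so your version is a cleaner packaging, and the ``main obstacle'' you flag is exactly this standard fact.
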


\begin{proof}
Let $\m b\in\vr v$ and $a,b\in B$ be such that $\varphi$ is a surjective homomorphism which maps $\Sg^{\m b}(a,b)$ onto the two-element majority algebra. Since $\m b\in \hsp_{fin}(\m a)$, there exists some positive integer $k$, some $\m c\leq \m a^k$ and some surjective homomorphism $\psi:\m c\rightarrow\m b$. Select two elements $a',b'\in C$ such that $\psi(a')=a$ and $\psi(b')=b$. Clearly, $\Sg^{\m c}(a',b')=\Sg^{\m a^k}(a',b')$. Therefore, $\varphi\circ\psi$ maps $\Sg^{\m a^k}(a',b')$ onto the two-element majority algebra.

Now we inductively select coordinates $i_1,i_2,\dots$ in $k$ in the following way: Let $a'(i_1)\neq b'(i_1)$ and if $i_1,i_2,\dots,i_j$ are selected, and, if such exist, select some $c,d\in\Sg^{\m a^k}(a',b')$. $c\neq d$, such that $\mathrm{pr}_{i_1,i_2,\dots,i_j}c=\mathrm{pr}_{i_1,i_2,\dots,i_j}d$. Then select $i_{j+1}$ so that $c(i_{j+1})\neq d(i_{j+1})$. Hence, $\mathrm{pr}_{i_1,i_2,\dots,i_j}\Sg(a',b')$ will keep increasing as $j$ increases, until the projection becomes the same size as $|\Sg(a',b')|$. This will happen at most when $j=n$, since there are at most $n$ many possible values for $(a'(i),b'(i))$, and when $a'(i)=a'(i')$ and $b'(i)=b'(i')$, then $c(i)=c(i')$ and $d(i)=d(i')$, so the coordinate $i'$ is redundant. So, for some selection $I=\{i_1,i_2,\dots,i_j\}$ of coordinates, $j\leq n$, we obtain an isomorphism $\tau$ which maps the projection $\mathrm{pr}_I\Sg(a',b')$ onto $\Sg(a',b')$. Denote $a''=\mathrm{pr}_I a'$ and $b''=\mathrm{pr}_I b'$. We get that $\varphi\circ\psi\circ\tau$ maps $\Sg^{\m a^I}(a'',b'')$ onto the two-element majority algebra with $\varphi\circ\psi\circ\tau(a'')=\varphi(a)$ and $\varphi\circ\psi\circ\tau(b'')=\varphi(b)$. Since $\m a^I\in\mathsf{HS}(\m a^n)$, thus the two-element majority algebra on $\{\varphi(a),\varphi(b)\}$ is also in $\mathsf{HS}(\m a^n)$, and since $t(x,y,z)$ satisfies the majority condition for all algebras in $\mathsf{HS}(\m a^n)$, we get that $t$ acts as the majority operation on $\{\varphi(a),\varphi(b)\}$, thus fulfilling condition (a) of Definition~\ref{majconddef}. Condition (b) of Definition~\ref{majconddef} is an identity and since it holds on $\m a\in\mathsf{HS}(\m a^n)$, it must hold throughout $\vr v$.

The statement about the minority condition has the same proof as above, we just replace the phrase ``the two-element majority algebra" with ``an affine algebra" everywhere. If $t(x,y,z)$ satisfies the additional conditions (a)-(c) of Theorem~\ref{Bunified} on $\mathsf{HS}(\m a^n)$, the proof that it satisfies the same conditions throughout $\vr v$ is analogous as we are once again checking homomorphic images of 2-generated subuniverses.
\end{proof}

\begin{cor}\label{majminexist}
Let $\m a$ be a finite minimal Taylor algebra and $\vr v$ the pseudovariety of finite minimal Taylor algebras $\m a$ generates. Then there exist ternary term operations $g$ and $h$ such that $g$ satisfies the majority condition and $h$ satisfies the minority condition on all algebras in $\vr k$ and they also satisfy the additional conditions given in Theorem~\ref{Bunified} (a)-(c).
\end{cor}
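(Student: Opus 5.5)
The corollary follows by combining Theorem~\ref{Bunified} with Theorem~\ref{majmincondV}. The plan is to apply Bulatov's result to the finite class $\vr k:=\mathsf{HS}(\m a^n)$ (taken up to isomorphism) with $n=|A|^2$, and then transfer the resulting terms to the whole pseudovariety $\vr v$. (In the statement, $\vr k$ should be read as $\vr v$.)

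First, $\vr k$ really is a finite class of finite smooth algebras. Since $\m a^n$ is finite, it has finitely many subalgebras, and each of these has finitely many congruences. So, up to isomorphism, $\mathsf{HS}(\m a^n)$ consists of finitely many finite algebras. Each of them lies in $\vr v$, so by Theorem~\ref{minTvariety} each is a minimal Taylor algebra, and by Proposition~\ref{minTsmooth} each is smooth. Theorem~\ref{Bunified} therefore applies to $\vr k$ and yields ternary terms $g$ and $h$. On every algebra of $\vr k$, $g$ satisfies the majority condition, $h$ satisfies the minority condition, and both satisfy the additional conditions (a)--(c).

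Next, I apply Theorem~\ref{majmincondV} to $g$, using the majority version together with the clause about (a)--(c). This shows that $g$ satisfies the majority condition and (a)--(c) on every $\m b\in\vr v$. The same theorem, in its minority version, gives the corresponding conclusion for $h$.

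The only point needing care is that Bulatov's theorem produces term operations of the class $\vr k$, whereas we want terms of $\vr v$. A term operation of a class is given by a single term in the signature, and $g$, $h$ are such terms, so this causes no difficulty. Apart from this, the argument is a direct combination of earlier results, and I expect no real obstacle.
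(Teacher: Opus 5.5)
Your proposal is correct and matches the paper's proof. The paper likewise applies Theorem~\ref{Bunified} to the finite class $\mathsf{HS}(\m a^n)$ with $n=|A|^2$, whose members are smooth by Theorem~\ref{minTvariety} and Proposition~\ref{minTsmooth}, and then uses Theorem~\ref{majmincondV} to extend the majority/minority conditions and conditions (a)--(c) to all of $\vr v$. Your reading of $\vr k$ as $\vr v$ in the statement is the intended one.
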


\begin{proof}
Let $n=|\m a|^2$. Note that $\mathsf{HS}(\m a^n)$ consists of finitely many isomorphism types of algebras, which are all smooth according to Theorem~\ref{minTvariety} and Proposition~\ref{minTsmooth}. According to Theorem~\ref{Bunified}, we can select $g$ and $h$ such that $g$ satisfies the majority condition and $h$ satisfies the minority condition on all algebras in $\mathsf{HS}(\m a^n)$, as well as the additional conditions of Theorem~\ref{Bunified} (a)-(c). But then Theorem~\ref{majmincondV} guarantees that $g$ satisfies the majority condition and $h$ satisfies the minority condition on all algebras in $\vr v$, and they also satisfy the additional conditions (a)-(c) on all algebras in $\vr v$.
\end{proof}

Now we are ready to define the majority and affine edges in the sense of Bulatov. In view of the results proved so far in this section, we feel justified to fix a finitely generated pseudovariety of minimal Taylor algebras $\vr v$, as well as two ternary terms $g(x,y,z)$ and $h(x,y,z)$ which satisfy the conclusion of Corollary~\ref{majminexist}.

\begin{df}\label{Buledgemadef}
Let $\m a\in \vr v$ be a finite minimal Taylor algebra and $a,b\in A$. 
\begin{enumerate}
\item[(a)] We say that $ab$ is a majority edge in the sense of Bulatov, and write $a\rightarrow_m^B b$, if for every term $t(x,y,z)$ which satisfies the majority condition on all algebras in $\vr v$, $b$ is an element of $\Sg(a,t(a,b,b))\cap \Sg(a,t(b,a,b))\cap\Sg(a,t(b,b,a))$. $a\rightarrow_m^B b$ is {\em special} if, additionally, there exists a surjective homomorphism from $\Sg(a,b)$ onto the two-element majority algebra.
\item[(b)] Let $h(x,y,z)$ be the fixed term which satisfies the minority condition, as well as conditions (a) and (b) of Theorem~\ref{Bunified}, on $\vr v$. We say that $ab$ is an affine edge in the sense of Bulatov, and write $a\rightarrow_a^B b$, if $h(b,a,a)=b$ and for every term $t(x,y,z)$ which satisfies the minority condition on all algebras in $\vr v$, $b\in \Sg(a,t(a,a,b))$.
\end{enumerate}
\end{df}

Now we verify that Bulatov's edges satisfy the Edge Axioms. In the pseudovariety $\vr v$, for all $\m a\in \vr v$ and $a,b\in A$, we define $a\rightarrow_{as}b$ iff $a\rightarrow_a^B b$ or $a\rightarrow_s^B b$, while $a\rightarrow_{sm}b$ iff $a\rightarrow_s^B b$ or $a\rightarrow_m^B b$. So we defined the digraphs $as(\m a)=(A;\rightarrow_{as})$ and $sm(\m a)=(A;\rightarrow_{sm})$ for all $\m a\in \vr v$. We will prove that the class of triples $\{(\m a,as(\m a),sm(\m a)):\m a\in\vr v\}$ satisfies the Edge Axioms.

Before we start, we have to make a sanity check regarding the s-edges. In order to prove it, we first restate a result from \cite{BulatovGraph2}.

\begin{lm}[Lemma 9 (2) of \cite{BulatovGraph2}]\label{diffedges}
Let $\m a_1,\m a_2\in \vr v$, $a\rightarrow_{asm}^B b$ in $\m a_1$ and $c\rightarrow_{asm}^B d$ in $\m a_2$. If the types of the two edges are different, then there exists a term $p(x,y)$ such that $p(a,b)=b$ and $p(d,c)=d$.
\end{lm}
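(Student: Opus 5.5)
Fix one edge of each type and find a binary term $p$ that fixes the second vertex of the first edge and the first vertex of the second. Concretely, build $p$ by composing binary terms derived from the fixed operations $g$ and $h$ of Corollary~\ref{majminexist}, together with semilattice terms, each chosen to act as needed on one edge and harmlessly on the other. We may replace $\m a_1,\m a_2$ by $\Sg(a,b)$ and $\Sg(c,d)$, and pass to the relevant homomorphic images $\varphi(\Sg(a,b))$ and $\psi(\Sg(c,d))$. These images are a two-element semilattice, a two-element majority algebra, or an affine algebra, according to the edge type. At the end the conclusion is pulled back from the images to the original pairs using the defining closure properties of Bulatov's edges. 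For example, for $a\rightarrow_m^B b$ we use $b\in\Sg(a,t(a,b,b))$, and for $a\rightarrow_a^B b$ we use $h(b,a,a)=b$ and $b\in\Sg(a,t(a,a,b))$.

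The argument splits into three cases of unordered type pairs. If one edge is semilattice, say $a\rightarrow_s^B b$, take a term $s(x,y)$ acting as the semilattice on $\{a,b\}$ with $s(a,b)=b$. On the majority or affine edge $cd$, use Theorem~\ref{Bunified}(a),(b),(c) to turn $s$ into a projection-like term. For instance, the binary term $q(x,y)=g(x,y,y)$ or $h(x,y,y)$ is a projection on the majority or affine image but still sends the semilattice pair to $b$. Compositions such as $p(x,y)=s(q(x,y),\dots)$ should then give $p(a,b)=b$ and $p(d,c)=d$; symmetric choices handle the reversed order. If one edge is majority and the other affine, use Theorem~\ref{Bunified}(b),(c): $h$ is the first projection on majority images and $g$ is the first projection on affine images. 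Then a term such as $p(x,y)=g(y,x,x)$ gives $y$ on the majority side via the majority law and acts as the first projection, $y$, on the affine side. One must check which of $x,y$ each is evaluated at. Alternatively $h(y,x,x)=y$ on the affine side gives $h(b,a,a)=b$, matching the defining condition of $\rightarrow_a^B$.

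The main obstacle is the lifting step. On the images the desired equalities hold, but we need $p(a,b)=b$ exactly and $p(d,c)=d$ exactly, not merely up to the kernel. My first attempt is to exploit the extra identities: condition (b) of the majority condition, $t(x,t(x,y,y),t(x,y,y))\approx t(x,y,y)$, and condition (b) of the minority condition, $h(h(x,y,y),y,y)\approx h(x,y,y)$. These make the maps $y\mapsto g(\cdot)$ and $x\mapsto h(x,y,y)$ idempotent-like, so iterating the binary term stabilizes, as in Lemma~\ref{therecanbonly1sedge}. For affine edges, $h(b,a,a)=b$ is built in. For majority and semilattice edges I would iterate $p$ enough times ($|A|!$-fold composition in the first variable) to obtain a term that is idempotent in the relevant sense. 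The minimality clauses in the edge definitions then force the fixed points to be $b$ and $d$ themselves. Since this is Lemma 9(2) of \cite{BulatovGraph2}, the cleanest route is to verify that Bulatov's hypotheses (smoothness, Proposition~\ref{minTsmooth}, and the finite-class terms from Corollary~\ref{majminexist}) hold in $\vr v$, and invoke his proof.
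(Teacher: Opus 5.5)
Your closing suggestion, to check Bulatov's hypotheses and import his proof, is exactly what the paper does. It gives no argument for this lemma and simply quotes it as Bulatov's Lemma 9(2), after Proposition~\ref{minTsmooth} and Corollary~\ref{majminexist}.

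The self-contained argument you develop before that has a genuine gap. It works in quotients of $\Sg(a,b)$ and $\Sg(c,d)$ of each edge's nominal type, but Definition~\ref{Buledgemadef} provides none. Only \emph{special} majority edges come with a map onto the two-element majority algebra, and affine edges are not required to have any affine quotient. By Proposition~\ref{Bedgesdepend}, a two-element semilattice can satisfy both $a\rightarrow_a^B b$ and $a\rightarrow_m^B b$. That is precisely the situation in which the paper applies the lemma (Lemma~\ref{as-smimpliess}), and there parts (b),(c) of Theorem~\ref{Bunified} tell you nothing.

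Independently, your lifting step does not close. Iterating a binary term in one variable makes it idempotent in that variable, but nothing makes $b$ or $d$ one of its fixed points. The edge definitions contain no minimality clauses, only the membership conditions $b\in\Sg(a,t(a,b,b))$ and the like. Even at the image level, for the order (majority, affine) the term $g(y,x,x)$ returns the wrong element on both sides; $g(x,y,y)$ is the right choice there.

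What is missing is to use those membership conditions directly, with no quotients. Swapping the variables of $p$ shows the lemma is symmetric in the two edges, so only three cases remain.

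\emph{Case $a\rightarrow_s^B b$, $c\rightarrow_a^B d$.} Take $p(x,y)=h(x,y,y)$. Then $h(d,c,c)=d$ by definition, and $h(a,b,b)=b$ by Theorem~\ref{Bunified}(a).

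\emph{Case $a\rightarrow_m^B b$, $c\rightarrow_s^B d$.} Choose a binary term $r$ with $r(a,g(a,b,b))=b$ and take $p(x,y)=r(x,g(x,y,y))$. Then $p(d,c)=r(d,d)=d$, because $g(d,c,c)=d$.

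\emph{Case $a\rightarrow_m^B b$, $c\rightarrow_a^B d$.} Put $R=\Sg((a,d),(b,c))$. With the same $r$, we get $(b,d_1)\in R$ for $d_1=r(d,g(d,c,c))$. Since also $(b,c)\in R$, this gives $\Sg(c,d_1)\subseteq[b]R$.

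Consider $T(x,y,z)=h(x,y,r(z,g(z,y,x)))$. It acts as $x-y+z$ on every affine image of a $2$-generated subalgebra, because there $g$ is the first projection. Let $u(x,y)=T(x,y,y)$, let $u^{(k)}$ be its $k$-fold iterate in the first variable, and let $N$ be such that $u^{(2N)}\approx u^{(N)}$ holds throughout $\mathcal V$; such $N$ exists since the $2$-generated free algebra is finite. Then $T'(x,y,z)=u^{(N-1)}(T(x,y,z),y)$ also satisfies identity (b), so $T'$ satisfies the minority condition. Moreover $T(c,c,d)=h(c,c,d_1)$, so $T'(c,c,d)\in\Sg(c,d_1)$.

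The affine clause now gives $d\in\Sg(c,T'(c,c,d))\subseteq[b]R$, that is, $(b,d)\in R$. This is exactly a binary $p$ with $p(a,b)=b$ and $p(d,c)=d$, proved within the paper's definitions.
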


Now comes the promised sanity check. The result stronger than the one stated below is actually proved in the proof of Lemma 24 (1) of \cite{BulatovGraph2}, but, since the statement of that lemma is weaker, we reproduce the proof for reader's convenience.

\begin{lm}\label{as-smimpliess}
For any $\m a\in\vr v$ and $a,b\in A$, $a\rightarrow_s^B b$ iff $a\rightarrow_{as}^B b$ and $a\rightarrow_{sm}^B b$.
\end{lm}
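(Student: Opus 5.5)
The forward direction is immediate from the definitions. If $a\rightarrow_s^B b$, then $a\rightarrow_{as}^B b$ holds because $\rightarrow_{as}^B$ is $\rightarrow_a^B\cup\rightarrow_s^B$, and $a\rightarrow_{sm}^B b$ holds because $\rightarrow_{sm}^B$ is $\rightarrow_s^B\cup\rightarrow_m^B$.

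For the converse, assume $a\rightarrow_{as}^B b$ and $a\rightarrow_{sm}^B b$, and suppose toward a contradiction that $a\not\rightarrow_s^B b$. Then the definitions force $a\rightarrow_a^B b$ and $a\rightarrow_m^B b$, so the same pair $ab$ is an edge of two different types: affine and majority. We apply Lemma~\ref{diffedges} with $\m a_1=\m a_2=\m a$, taking the edge $a\rightarrow b$ once as an affine edge and once as a majority edge. This gives a binary term $p(x,y)$ with $p(a,b)=b$ and $p(b,a)=b$, since here $c=a$ and $d=b$. Set $q(x,y):=p(x,y)$ on the set $\{a,b\}$. Idempotence and the two equalities say that $q$ acts on $\{a,b\}$ as a semilattice operation with absorbing element $b$. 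By Theorem~\ref{subuniversesminTaylor}, $\{a,b\}$ is then a subuniverse, which is exactly Definition~\ref{Bulsedgedef}. Hence $a\rightarrow_s^B b$, contradicting the assumption. (No case analysis on whether $p$ is a projection is needed: a projection cannot satisfy both $p(a,b)=b$ and $p(b,a)=b$ when $a\neq b$.)

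The only step needing care is checking that Lemma~\ref{diffedges} really applies when both edges are the same pair $(a,b)$ in the same algebra. Its statement allows arbitrary algebras in $\vr v$ and arbitrary edges of different types, so I take the same edge twice. I also need $a\neq b$ so that these count as genuine edges; this holds because all edges are between distinct elements, and in any case $a=b$ cannot be an edge of either type.

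The main obstacle I anticipate is whether Bulatov's Lemma~9 in \cite{BulatovGraph2} quietly assumes the edges are distinct or are thin edges of a specific form. If applying it to a single pair is problematic, the fallback is to use the terms $g$ and $h$ from Corollary~\ref{majminexist}, or a term $p$ built from them in the usual Bulatov way. One would then compute $p$ on a majority quotient and on an affine quotient of $\Sg(a,b)$, and lift the result back to $\{a,b\}$ using $h(b,a,a)=b$ and $b\in\Sg(a,g(a,b,b))$.
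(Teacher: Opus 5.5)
Your argument is correct and is essentially the paper's proof: the only nontrivial case is $a\rightarrow_a^B b$ together with $a\rightarrow_m^B b$, and applying Lemma~\ref{diffedges} to this same pair, viewed as edges of two different types, gives a binary term $p$ with $p(a,b)=p(b,a)=b$, i.e.\ $a\rightarrow_s^B b$. The contradiction framing, the appeal to Theorem~\ref{subuniversesminTaylor}, and the fallback are all unnecessary. One small correction: under Definition~\ref{Buledgemadef} as written, $a\rightarrow_a^B a$ and $a\rightarrow_m^B a$ hold trivially, while $a\rightarrow_s^B a$ fails. So the lemma has to be read for distinct $a,b$, as the paper implicitly does; your remark that a loop ``cannot be an edge of either type'' is not the right justification.
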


\begin{proof}
The only nontrivial case to check is $(\Leftarrow)$ when $a\rightarrow_a^B b$ and also $a\rightarrow_m^B b$. Let $p(x,y)$ be the binary term provided by Lemma~\ref{diffedges} for edges $a\rightarrow_a^B b$ and $a\rightarrow_m^B b$. Then $p(a,b)=b$ and $p(b,a)=b$, so $a\rightarrow_s^B b$.
\end{proof}

Now we verify that Bulatov's edges satisfy the Edge Axioms.

\begin{thm}
If $\vr v$ is a finitely generated pseudovariety of minimal Taylor algebras, and for all $\m a\in\vr v$, the digraphs $as(A)$ and $sm(\m a)$ are defined as before Lemma~\ref{diffedges}, then $\{(\m a,as(\m a),sm(\m a)):\m a\in\vr v\}$ satisfies the Edge Axioms.
\end{thm}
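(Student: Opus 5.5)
We verify the seven axioms one by one for the digraphs $\rightarrow_{as}=\rightarrow_a^B\cup\rightarrow_s^B$ and $\rightarrow_{sm}=\rightarrow_s^B\cup\rightarrow_m^B$. Bulatov's results from \cite{BulatovGraph1} and \cite{BulatovGraph2} are used as black boxes wherever possible. Lemma~\ref{as-smimpliess} identifies $\rightarrow_s$ with $\rightarrow_s^B$, and Corollary~\ref{majminexist} makes Bulatov's machinery apply to the whole (infinite) pseudovariety $\vr v$.

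\textbf{Base Axioms.} These are direct.
\begin{itemize}
\item Base Axiom 3: by Theorem~\ref{subuniversesminTaylor}, $\{a,b\}$ is a subuniverse, so $a\rightarrow_s^B b$ by definition, and hence $a\rightarrow_s b$.
\item Base Axiom 1: let $\m b\le\m a$ be affine and $a,b\in B$. By Corollary~\ref{majminexist} and Theorem~\ref{Bunified}(b), the fixed $h$ acts as $x-y+z$ on $\m b$, so $h(b,a,a)=b$. For any $t$ satisfying the minority condition, $t(a,a,b)$ computed in $\Sg(a,b)\le\m b$ equals $a-a+b=b$. This uses condition (a) of the minority condition with $\varphi$ the identity onto the affine algebra $\Sg(a,b)$. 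Hence $b\in\Sg(a,t(a,a,b))$, so $a\rightarrow_a^B b$.
\item Base Axiom 2: argue symmetrically with majority. Any $t$ satisfying the majority condition gives $t(a,b,b)$, $t(b,a,b)$ and $t(b,b,a)$ in $\Sg(a,b)$. We must show $b\in\Sg(a,t(a,b,b))$. Since $\m b$ has a majority term and is minimal Taylor, the majority term generates its clone, so every term (in particular $t$) restricted to a 2-generated subalgebra should be analysed via its 2-element majority quotients. I would try to show $\Sg(a,t(a,b,b))=\Sg(a,b)$ by a maximal-congruence argument: otherwise some quotient of $\Sg(a,b)$ onto a 2-element majority algebra separates $b$ from $\Sg(a,t(a,b,b))$, and condition (a) then forces $t(a,b,b)\equiv b$, a contradiction. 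If this is too fiddly, I would cite Bulatov's result that algebras with majority terms have all pairs connected by majority/semilattice edges.
\end{itemize}

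\textbf{Homomorphism Axioms.} Homomorphism Axiom 1 should follow from the definitions being expressed by terms: $h(b,a,a)=b$ and $b\in\Sg(a,t(\dots))$ are preserved by homomorphisms. For s-edges, use Corollary~\ref{sedge2abs}: the image of the subuniverse $\{a,b\}$ is a subuniverse on which the binary term still acts as a semilattice. A degenerate image $f(a)=f(b)$ should be handled by allowing loops, or by noting the axiom's intent. Homomorphism Axiom 2 is the lifting property, which is exactly Bulatov's edge-lifting lemma in \cite{BulatovGraph1}: a minimal pair $a,b'$ over an edge $f(a)f(b)$ is again an edge of the same type. I would invoke it, checking that its smoothness hypotheses hold by Proposition~\ref{minTsmooth}.

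\textbf{Relational Axioms.} These are the heart of the proof, and I expect them to be the main obstacle. The plan is to prove them with the operations $g$ and $h$ and the binary operations of thin edges.
\begin{itemize}
\item Relational Axiom 1: from $(a_1,b_2),(a_2,b_1)\in R$, apply a term that acts like $a_2$ on the as-edge $a_1a_2$ and like $b_2$ on the sm-edge $b_1b_2$. This is Lemma~\ref{diffedges} when the types differ, and the operation $p$ with $p(a_1,a_2)=a_2$, $p(b_2,b_1)=b_2$ gives $p((a_1,b_2),(a_2,b_1))=(a_2,b_2)$. When both are s-edges, the semilattice term works directly.
\item Relational Axiom 2: use $h$ on $(a_1,b_2),(a_1,b_1),(a_2,b_1)$, i.e. $h\big((a_2,b_1),(a_1,b_1),(a_1,b_2)\big)$, aiming for $(a_2,b_2)$ via the minority behaviour $h(x,y,y)=x$ and $h(y,y,x)=x$ on affine edges. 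The difficulty is that on thin affine edges the equalities hold only modulo the defining quotient. Here I would use Bulatov's thin-edge terms, $h(b,a,a)=b$ and its analogue, as in \cite{BulatovGraph2}, possibly iterating using condition (b) of the minority condition to stabilize.
\item Relational Axiom 3: apply $g$ to the three tuples, using $g$ acting as majority (or meet) on thin sm-edges, so that $g(a_1,a_2,a_2)=a_2$ and likewise in every coordinate.
\end{itemize}
I would first try to extract all three Relational Axioms from Bulatov's ``thin edge'' lemmas (Lemma 24 of \cite{BulatovGraph2}), which provide exactly such terms with $t(a,b)=b$ on edges. The remaining risk is that those lemmas deliver the required identities only for thin edges rather than all Bulatov edges.
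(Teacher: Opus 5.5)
Your overall plan is the paper's. You check the Base Axioms and Homomorphism Axiom~1 from the definitions, and you use Bulatov's lemmas for Homomorphism Axiom~2 and the hard relational cases. Your treatment of Homomorphism Axiom~1 and of the mixed-type case of Relational Axiom~1 via Lemma~\ref{diffedges} is fine. The degenerate image $f(a)=f(b)$ needs no convention, since $a\rightarrow_a^B a$ and $a\rightarrow_m^B a$ hold by idempotence.

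The genuine gap is Relational Axiom~3. You assert $g(a_1,a_2,a_2)=a_2$, $g(b_2,b_1,b_2)=b_2$ and $g(c_2,c_2,c_1)=c_2$ because $g$ ``acts as majority on thin sm-edges''. This does not follow from the definitions. The edge $a_1\rightarrow_m^B a_2$ only gives $a_2\in\Sg(a_1,g(a_1,a_2,a_2))$, and the majority condition controls $g$ only on a two-element majority \emph{quotient} of $\Sg(a_1,a_2)$, not on the elements themselves. Your fallback, binary thin-edge terms with $t(a,b)=b$, cannot work at all. Take every coordinate to be the two-element majority algebra on $\{0,1\}$ with the edge $0\rightarrow 1$: every binary term is then a projection, so a binary term applied to two of the three tuples just returns one of them. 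A genuinely ternary term is needed, and this is exactly Lemma~27 of \cite{BulatovGraph1}, which the paper cites. For three majority edges it provides a term $t$ with $t(a_1,a_2,a_2)=a_2$, $t(b_2,b_1,b_2)=b_2$ and $t(c_2,c_2,c_1)=c_2$. That lemma covers only three $m$-edges, so you also need the paper's reduction when some coordinate, say the first, carries an $s$-edge. The set $S=\{(x,y):(x,y,c_2)\in R\}$ is a subuniverse containing $(a_1,b_2)$ and $(a_2,b_1)$. Relational Axiom~1, applied with $a_1\rightarrow_{as}a_2$ and $b_1\rightarrow_{sm}b_2$, then gives $(a_2,b_2)\in S$.

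Relational Axiom~2 has the same defect. Your computation $h((a_2,b_1),(a_1,b_1),(a_1,b_2))$ needs $h(b_1,b_1,b_2)=b_2$, whereas $b_1\rightarrow_a^B b_2$ only supplies $h(b_2,b_1,b_1)=b_2$. As you suspect, the case of two affine edges must come from Lemma~24(1) of \cite{BulatovGraph2}. All other combinations (two $s$-edges, or an $a$-edge and an $s$-edge) are instances of Relational Axiom~1, after transposing $R$ if needed, because that proof never uses $(a_1,b_1)\in R$. You should state this case split explicitly.

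Three smaller points:
\begin{enumerate}
\item In Relational Axiom~1 with two $s$-edges, the two witnessing semilattice terms may differ, so ``the semilattice term works directly'' is not enough. Take a cyclic term instead: all its variables are essential, so by Corollary~\ref{sedge2abs} it acts as the semilattice operation on every $s$-edge simultaneously.
\item Base Axiom~2 is much simpler than your sketch. The set $\{a,b\}$ is closed under the majority term, hence a subuniverse by Theorem~\ref{subuniversesminTaylor}. So $\Sg(a,b)=\{a,b\}$ is itself the two-element majority algebra, and the identity map in the majority condition gives $t(a,b,b)=t(b,a,b)=t(b,b,a)=b$ directly.
\item In Base Axiom~1 the relevant fact is condition~(a) of the minority condition, not Theorem~\ref{Bunified}(b), which concerns majority quotients.
\end{enumerate}
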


\begin{proof}
{\bf Base Axiom 1.} Assume that $\m a\in\vr v$, that $\m b\leq \m a$ is an affine subalgebra and $a,b\in B$. Then $C:=\Sg^{\m a}(a,b)$ is a subuniverse of $\m b$, so it is also an affine algebra. By taking as $\varphi$ the identity map of $C$, we conclude that any operation $h(x,y,z)$ which satisfies the minority condition on $\vr v$ must act on $C$ as $x-y+z$. So for any $h$ which satisfies the minority condition on $\vr v$ (including when we take for $t$ the fixed term $h(x,y,z)$), we have $b=t(b,a,a)=t(a,a,b)$ and therefore $a\rightarrow_a^B b$, implying $a\rightarrow_{as}b$.

{\bf Base Axiom 2.} Assume that $\m a\in\vr v$, that $\m b\leq \m a$ is a majority subalgebra and $a,b\in B$. Then $\Sg^{\m a}(a,b)=\{a,b\}$ and it is a two-element majority algebra. Using as $\varphi$ the identity map on $\{a,b\}$, we get that any operation $t(x,y,z)$ which satisfies the majority condition on $\vr v$ must act on $\{a,b\}$ as majority. So for any $t$ which satisfies the majority condition on $\vr v$, $b=t(a,b,b)=t(b,a,b)=t(b,b,a)$ and therefore $a\rightarrow_m^B b$, implying $a\rightarrow_{sm}b$.

{\bf Base Axiom 3.} Assume that $\m a\in\vr v$, $a,b\in A$ and that the binary term $t$ satisfies $t(a,a)=a$ and $t(a,b)=t(b,a)=t(b,b)=b$. Then $a\rightarrow_s^B b$ by Definition~\ref{Bulsedgedef}, and therefore $a\rightarrow_s b$ by Lemma~\ref{as-smimpliess}.

To prove {\bf Homomorphism Axioms 1 and 2}, we assume that $\m a,\m b\in\vr v$, that $\varphi:\m a\rightarrow\m b$ is a homomorphism and that $a,b\in A$. If $a\rightarrow_a^B b$, $a\rightarrow_m^B b$, or $a\rightarrow_s^B b$, then we need to prove $\varphi(a)\rightarrow_a^B \varphi(b)$, $\varphi(a)\rightarrow_m^B \varphi(b)$, or $\varphi(a)\rightarrow_s^B \varphi(b)$, respectively. But this is precisely the content of Lemma 11 (2) of \cite{BulatovGraph2}. Homomorphism Axiom 1 follows directly. On the other hand, if $\varphi(a)\rightarrow_a^B \varphi(b)$, $\varphi(a)\rightarrow_m^B \varphi(b)$, or $\varphi(a)\rightarrow_s^B \varphi(b)$, then we need to prove that there exists $b'\in A$ such that $\varphi(b')=\varphi(b)$ and moreover, that $a\rightarrow_a^B b'$, $a\rightarrow_m^B b'$, or $a\rightarrow_s^B b'$, respectively. But this is precisely the content of Lemma 11 (1) of \cite{BulatovGraph2} and Homomorphism Axiom 2 follows.

{\bf Relational Axiom 1.} Assume that $\m a,\m b\in\vr v$, that $\m r\leq_{sd}\m a\times\m b$ is a subdirect product, that $a_1\rightarrow_{as} a_2)$, that $b_1\rightarrow_{sm} b_2$, and that $(a_1,b_2),(a_2,b_1)\in R$. By the way we defined $as(\m a)$ and $sm(\m b)$, this means that either $a_1\rightarrow_s^B a_2$ and $b_1\rightarrow_s^B b_2$, or that $a_1\rightarrow a_2$ and $b_1\rightarrow b_2$ are Bulatov's thin edges of different types. In the first case, if $c(x_1,\dots,x_n)$ is any cyclic term of $\vr v$, then $c^{\m a\times\m b}((a_1,b_2),(a_1,b_2),\dots,(a_1,b_2),(a_2,b_1))=(a_2,b_2)$, using Corollary~\ref{sedge2abs} and the fact that every position in a cyclic term is essential. In the second case, by Lemma~\ref{diffedges} we get a binary term $p(x,y)$ such that $p((a_1,b_2),(a_2,b_1))=(a_2,b_2)\in R$. In both cases, Relational Axiom 1 holds.

{\bf Relational Axiom 2.} Assume that $\m a,\m b\in\vr v$, that $\m r\leq_{sd}\m a\times\m b$ is a subdirect product, that $a_1\rightarrow_{as} a_2$ in $\m a$, that $b_1\rightarrow_{as} b_2$ in $\m b$, and that $(a_1,b_1),(a_1,b_2),(a_2,b_1)\in R$. By the way we defined $\rightarrow_{as}$ in $\vr v$, one possibility is that $a_1\rightarrow_a^B a_2$ in $\m a$ and $b_1\rightarrow_a^B b_2$ in $\m b$, in which case $(a_2,b_2)\in R$ by Lemma 24 (1) of \cite{BulatovGraph2}. The remaining possibility is that $a_1\rightarrow_{as} a_2$ in $\m a$ and $b_1\rightarrow_{as} b_2$ in $\m b$ are Bulatov's thin edges which are in the cases already covered by Relational Axiom 1 (either different thin edges, or both are $\rightarrow_s^B$). By the proof of Relational Axiom 1, we obtain $(a_2,b_2)\in R$ even when $(a_1,b_1)\in R$ is not assumed. In both cases, Relational Axiom 2 holds.

{\bf Relational Axiom 3.} We assume that $\m a,\m b,\m c\in \vr v$ and that $R\leq \m a\times \m b\times \m c$ is a subdirect product. Also, let
$a_1\rightarrow_{sm}^B a_2$ in $\m a$, $b_1\rightarrow_{sm}^B b_2$ in $\m b$, $c_1\rightarrow_{sm}^B c_2$ in $\m c$, and $(a_1,b_2,c_2),(a_2,b_1,c_2),(a_2,b_2,c_1)\in R$. Again we have two cases. One case is when there exists an edge $u_1\rightarrow_s^B u_2$, for $u\in \{a,b,c\}$. WOLOG, assume that $a_1\rightarrow_s^B a_2$. Then consider $S=\{(x,y)\in A\times B:(x,y,c_2)\in R\}$. $S$ is a subuniverse of $\m a\times\m b$, being pp-definable with constants from $R$, and $(a_1,b_2),(a_2,b_1)\in S$. Moreover, $(a_1,a_2)\in as(\m a)$ and $(b_1,b_2)\in sm(\m b)$, so by Relational Axiom 1, $(a_2,b_2)\in S$. By definition of $S$, $(a_2,b_2,c_2)\in R$. The remaining case is when $a_1\rightarrow_m^B a_2$, $b_1\rightarrow_m^B b_2$, and $c_1\rightarrow_m^B c_2$, in which case Lemma 27 of \cite{BulatovGraph1} shows that there exists some ternary term $t(x,y,z)$ such that $t(a_1,a_2,a_2)=a_2$, $t(b_2,b_1,b_2)=b_2$, and $t(c_2,c_2,c_1)=c_2$. Therefore,
\[
t\left(
\left[\begin{array}{c}
a_1\\
b_2\\
c_2
\end{array}\right],
\left[\begin{array}{c}
a_2\\
b_1\\
c_2
\end{array}\right],
\left[\begin{array}{c}
a_2\\
b_2\\
c_1
\end{array}\right]
\right)=\left[
\begin{array}{c}
a_2\\
b_2\\
c_2
\end{array}
\right].
\]
(In the above computation, we have denoted the elements of $S$ as vector columns.) In this case we also obtain $(a_2,b_2,c_2)\in R$, so Relational Axiom 3 is proved.
\end{proof}

We finish this subsection with a result which shows that $\rightarrow_a^B$ and $\rightarrow_m^B$ depend on all of $\vr v$.

\begin{prp}\label{Bedgesdepend}
Let $\vr v$ be a finitely generated pseudovariety of minimal Taylor algebras such that $\m a\in \vr v$ is a two-element semilattice, $A=\{a,b\}$ and $\{b\}\lhd_2 \{a,b\}$. Then $a\rightarrow_a^B b$ iff there exists an affine algebra in $\vr v$, while $a\rightarrow_m^B b$ iff there exists a two-element majority algebra in $\vr v$.
\end{prp}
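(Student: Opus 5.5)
The plan is to reduce both equivalences to questions about which variables of a ternary term are essential on the two-element semilattice $\m a$. Then, in the nontrivial directions, I will play off the strong projectivity of $\{b\}$ against the behaviour of the term on an affine or majority algebra inside a product. First, every term operation of $\m a$ is the semilattice meet (with $b$ as the bottom) of its essential variables. Second, $\Sg(a,a)=\{a\}$ and $\Sg(a,b)=\{a,b\}$. So for a ternary term $t$, the condition $b\in\Sg(a,t(a,a,b))$ holds iff $t(a,a,b)=b$, i.e. iff the third variable of $t$ is essential on $\m a$. Likewise $h(b,a,a)=b$ iff the first variable of $h$ is essential on $\m a$. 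Finally, $b\in\Sg(a,t(a,b,b))$ holds iff the second or third variable is essential, and similarly for the other two majority-type expressions.

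For the directions ``no affine (resp. majority) algebra $\Rightarrow$ no edge'', I will take $t(x,y,z)=x$. If $\vr v$ contains no nontrivial affine algebra, then condition (a) of Definition~\ref{minconddef} is vacuous, and (b) holds trivially for a projection. So $t$ satisfies the minority condition on $\vr v$, yet $t(a,a,b)=a$, hence $b\notin\Sg(a,t(a,a,b))$ and $a\not\rightarrow_a^B b$. Similarly, if no two-element majority algebra lies in $\vr v$, then (a) of Definition~\ref{majconddef} is vacuous and (b) reads $x\approx x$. Thus $t=x$ satisfies the majority condition while $t(a,b,b)=a$, so $a\not\rightarrow_m^B b$.

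For the converse in the affine case, let $\m m\in\vr v$ be a nontrivial affine algebra, and let $c\neq d$ in $M$. Let $t$ satisfy the minority condition on $\vr v$. Applying (a) to the identity map on $\Sg(c,d)$ (which is affine), $t$ acts as $x-y+z$ on $\m n:=\Sg(c,d)$. Put $s(x,z):=t(x,x,z)$; it acts on $\m n$ as the second projection. Suppose the third variable of $t$ were inessential on $\m a$; then $s$ would act on $\m a$ as the first projection. Consider $\m a\times\m n\in\vr v$, which is minimal Taylor by Theorem~\ref{minTvariety}. The set $\{b\}\times N$ is the preimage of $\{b\}\lhd_2\m a$ under the projection, so by Lemma~\ref{homabsorb}(1) it is 2-absorbing in $\m a\times\m n$. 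By Theorem~\ref{2absstrproj} it is therefore strongly projective. The second variable of $s$ is essential on $\m a\times\m n$, since it is essential on $\m n$. Hence $s((a,c),(b,d))\in\{b\}\times N$, whereas its first coordinate is $s(a,b)=a$, a contradiction. Thus $t(a,a,b)=b$ for every such $t$. The same argument with $s(x,z):=h(x,z,z)$ (first projection on $\m n$) shows that the first variable of $h$ is essential on $\m a$, i.e. $h(b,a,a)=b$. Hence $a\rightarrow_a^B b$.

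The majority case is identical in structure. Let $\{c,d\}$ be a two-element majority algebra in $\vr v$. Any $t$ satisfying the majority condition acts on it as majority, by (a) applied to the identity map. Then $s(x,y):=t(x,y,y)$ is the second projection there. If both $y,z$ were inessential in $t$ on $\m a$, then $s$ would be the first projection on $\m a$. Strong projectivity of $\{b\}\times\{c,d\}$ in $\m a\times\{c,d\}$ then gives the same contradiction. The same reasoning applies to $t(y,x,y)$ and $t(y,y,x)$, so all three of $t(a,b,b),t(b,a,b),t(b,b,a)$ equal $b$, and $a\rightarrow_m^B b$. The main point needing care is the product trick: essentiality must be evaluated in $\m a\times\m n$, not in $\m a$, which is exactly what Theorem~\ref{2absstrproj} permits. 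A secondary point is making sure that ``affine algebra'' in the minority condition means a nontrivial one, which is implicit in the surjectivity remark after Definition~\ref{minconddef}.
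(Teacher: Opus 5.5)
Your proof is correct and follows essentially the paper's argument. For the directions ``no affine/majority algebra $\Rightarrow$ no edge'' you take the first projection, which vacuously satisfies the relevant condition. For the other directions you lift $\{b\}$ via Lemma~\ref{homabsorb}(1) to a 2-absorbing, hence strongly projective (Theorem~\ref{2absstrproj}), subuniverse of an algebra in $\mathcal{V}$ that also maps onto the affine/majority algebra, where the needed variables are essential. The differences are cosmetic: you use the product $\mathbf{A}\times\mathbf{N}$ where the paper uses the free algebra $\mathbf{F}_{\mathcal{V}}(x,y)$, and you argue by contradiction through a derived binary term instead of applying strong projectivity directly to $t(x,y,y)$, $t(y,x,y)$, $t(y,y,x)$.
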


\begin{proof}
Suppose first that there is no 2-element majority algebra in $\vr v$. Since $\vr v$ is closed under taking subalgebras and homomorphic images, the assumptions of the majority condition are never met, so that condition is vacuously true for any projection operation, e.g. $t(x,y,z)=x$ satisfies the majority condition. But $b\notin\{a\}=\Sg(a,t(a,b,b))$, so $a\rightarrow_m^B b$ does not hold. 

Similarly, if there is no affine algebra in $\vr v$, we conclude that the first projection satisfies the minority condition (though not condition (a) from Theorem~\ref{Bunified}). Hence we can take the first projection to be the operation $t(x,y,z)$ from the definition of the minority edges. But again, $b\notin\{a\}=\Sg(a,t(a,a,b))$.

On the other hand, if there is a 2-element majority algebra $\m b\in \vr v$, then consider any term $t(x,y,z)$ which satisfies the majority condition. Let $\m f=\m f_{\vr v}(x,y)$ be the free algebra, let $\varphi:\m f\rightarrow \m a$ be the homomorphism such that $\varphi(x)=a$ and $\varphi(y)=b$, and let $\psi:\m f\rightarrow\m b$ be some homomorphism such that $\psi(\{x,y\})=B$. Denote by $\alpha:=\ker\varphi$ and $\beta:=\ker\psi$. Since 
\[
\begin{gathered}
\mbox{}[t(a,a,b)]_\beta=[t(a,b,a)]_\beta=[t(b,a,a)]_\beta\\
\neq [t(a,b,b)]_\beta=[t(b,a,b)]_\beta=[t(b,b,a)]_\beta,
\end{gathered}
\]
we obtain that $t^{\sm f}$ depends on all three variables. On the other hand, from $\{b\}\lhd_2 \{a,b\}$ we obtain $[y]_\alpha\lhd_2 \m f$, so by Theorem~\ref{2absstrproj} $[y]_\alpha$ is a strongly projective subuniverse of $\m f$. Hence, $$[t(x,y,y)]_\alpha=[t(y,x,y)]_\alpha=[t(y,y,x)]_\alpha=[y]_\alpha,$$
and after applying $\varphi$, we obtain $t(a,b,b)=t(b,a,b)=t(b,b,a)=b$, and therefore $a\rightarrow_m^B b$.

The proof that, if there exists an affine algebra $\m c\in \vr v$, then $a\rightarrow_a^B b$ is similar (just take some 2-generated subalgebra of $\m c$, instead of the whole $\m c$, and then proceed as in the previous paragraph).
\end{proof}

\subsection{Our edges satisfy Edge Axioms}\label{subsec:52}

Next we define a pair of directed graphs, $as(\m a)$ and $sm(\m a)$, on each algebra $\m a$ in the finitely generated pseudovariety of minimal Taylor algebras $\vr v$ in a different way, and prove that these graphs also satisfy the Edge Axioms. These new digraphs depend only on the algebra on which they act, not on the rest of $\vr v$, and we believe their definition is easier and more natural than Bulatov's graphs. 

\begin{df}\label{ouredgesdef}
Let $\vr v$ be a finitely generated pseudovariety of minimal Taylor algebras, $\m a\in\vr v$, and  $a,b \in A$.
\begin{enumerate}
\item $a\rightarrow_{as}^Z b$ in $\m a$ if for every cyclic term $c(x_1, \dots , x_n)$ of $\vr v$, \[b\in \Sg(a,c(a,a, ... , a, b)).\]

\item  $a\rightarrow_{sm}^Z b$ in $\m a$ if for every binary term $t(x,y)$, every cyclic term $c(x_1, ... , x_n)$ of $\vr v$ and every $k>0$ such that $k\leq n/2$,
\[b \in \Sg(a, t( b, c(\underbrace{a,a,\dots,a}\limits_k,\underbrace{b,b,\dots,b}\limits_{n-k}) ) ).\] 
(In the expression $c(a,a,\dots,a,b,b,\dots,b)$, the first $k$ variables are evaluated as $a$, while the remaining $n-k$ variables are evaluated as $b$.) \end{enumerate}
\end{df}

{\noindent\bf Notation.} For shorter notation, following \cite{MM}, we will use the expression $c(a^kb^{n-k})$ instead of $c(\underbrace{a,a,\dots,a}\limits_k,\underbrace{b,b,\dots,b}\limits_{n-k})$.

We define the {\em full composition} of terms $s(x_1,\dots,x_m)$ and $t(x_1,\dots,x_n)$, denoted by $s\lhd t$, as the term
\[
\begin{gathered}
(s\lhd t)(x_1,\dots,x_{mn}):=\\
s(t(x_1,\dots,x_n),t(x_{n+1},\dots,x_{2n}),\dots,t(x_{(m-1)n+1},\dots,x_{mn})).
\end{gathered}
\]

\begin{thm}\label{ouredgesEAx}
Let $\vr v$ be a finitely generated pseudovariety of minimal Taylor algebras and for each $\m a\in\vr v$ let $as(\m a)=(A;\rightarrow_{as}^Z)$ and $sm(\m a)=(A;\rightarrow_{sm}^Z)$ be the digraphs defined in Definition~\ref{ouredgesdef}. Then $\{(\m a,as(\m a),sm(\m a)):\m a\in\vr v\}$ satisfies the Edge Axioms.
\end{thm}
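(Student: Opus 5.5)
The plan is to verify the seven Edge Axioms one by one for $\rightarrow_{as}^Z$ and $\rightarrow_{sm}^Z$. We use three facts throughout: cyclic terms of $\vr v$ have all coordinates essential, full compositions $c\lhd c'$ of cyclic terms are again cyclic up to a permutation of variables, and Corollary~\ref{universaloppseudovar} together with Theorem~\ref{2absstrproj} control affine algebras and strongly projective sets.

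\textbf{Base Axioms.} For Base Axiom 1, let $\m b$ be affine and $a,b\in B$. Each cyclic term $c$ of arity $n$ acts on $\m b$ as $\frac1n\sum x_i$, so $n$ is invertible in the ring. Hence $c(a,\dots,a,b)=a+\frac1n(b-a)$. The subalgebra $\Sg(a,a+\frac1n(b-a))$ contains, via the Maltsev operation $x-y+z$ of Corollary~\ref{universaloppseudovar}(3), all multiples of $\frac1n(b-a)$ added to $a$, and therefore contains $b$.

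For Base Axiom 2, a majority subalgebra is $2$-element by Theorem~\ref{subuniversesminTaylor}. Since $k\le n/2$, the value $c(a^kb^{n-k})$ is $b$: a cyclic term on a $2$-element majority algebra is the threshold function by symmetry. Then $t(b,b)=b$, so $b\in\Sg(a,b)$ trivially.

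For Base Axiom 3, $\{a,b\}$ is a semilattice subuniverse and $\{b\}$ is strongly projective. So $c(a,\dots,a,b)=b$ and $t(b,c(\dots))=b$, which gives both edges.

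\textbf{Homomorphism Axioms.} Axiom 1 is immediate, since $f(\Sg(a,c(\dots)))=\Sg(f(a),c(f(a),\dots))$. For Axiom 2, suppose $f(b')=f(b)$ with $\Sg(a,b')$ minimal. Set $d=c(a,\dots,a,b')$. The algebra $f(\Sg(a,d))$ contains $f(b)$, so some $b''\in\Sg(a,d)\subseteq\Sg(a,b')$ has $f(b'')=f(b)$. Then $\Sg(a,b'')\subseteq\Sg(a,b')$, and minimality forces equality, so $b'\in\Sg(a,b'')\subseteq\Sg(a,d)$. The sm case is identical, with $t(b',c(a^kb'^{n-k}))$ in place of $d$.

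\textbf{Relational Axioms.} This is the main obstacle. For Axiom 1, take $R$, $(a_1,b_2),(a_2,b_1)\in R$, and set
\[
(u,v)=c\bigl((a_1,b_2),\dots,(a_1,b_2),(a_2,b_1),\dots,(a_2,b_1)\bigr),
\]
with $k$ copies of the first pair. Then $u=c(a_1^ka_2^{n-k})$ and $v=c(b_2^kb_1^{n-k})$, and by cyclicity the positions can be rotated. The plan is a minimal counterexample argument. Choose $R$ minimal, and use that $R[b_2]$ and $[a_2]R$ are subuniverses. The as-edge condition puts $a_2$ into $\Sg(a_1,c(a_1,\dots,a_1,a_2))$, while the sm-edge, using the binary term $t$, puts $b_2$ into $\Sg(b_1,t(b_2,c(b_1^kb_2^{n-k})))$. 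Applying the same terms coordinatewise to pairs in $R$ should propagate $(a_2,b_2)\in R$.

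The difficulty is choosing $k$ and the binary term so that both coordinates land correctly simultaneously. I would first try composing $c\lhd c'$ to create arities where the required $k$ is exactly $1$ on the as-side and at most $n/2$ on the sm-side. I would then induct on $|\Sg(a_1,a_2)|+|\Sg(b_1,b_2)|$, using Homomorphism Axiom 2 style minimality to replace $a_2$ by an element whose $\Sg$ generates.

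Axiom 2 and Axiom 3 (the ternary case) follow the same scheme. For Axiom 2, the extra tuple $(a_1,b_1)$ is used to give both coordinates the form $c(x_1,\dots,x_1,x_2)$. For Axiom 3, three tuples in $R$ are arranged cyclically in $c$ so that each coordinate receives pattern $a^kb^{n-k}$ with $k\le n/2$.
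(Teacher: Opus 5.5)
\textbf{The gap is in the Relational Axioms.} Your Base Axiom 1 argument is fine, and shorter than the paper's iteration of full compositions: the Maltsev term $x-y+z$ from Corollary~\ref{universaloppseudovar}(3) together with $n\cdot\frac1n(b-a)=b-a$ suffices. Your Homomorphism Axioms match the paper's argument. The Relational Axioms, which you yourself call the main obstacle, are only outlined: ``should propagate'', a minimal counterexample, induction on $|\Sg(a_1,a_2)|+|\Sg(b_1,b_2)|$, composing $c\lhd c'$. None of this is needed. The ``difficulty'' of landing both coordinates simultaneously does not arise. The definition of $\rightarrow_{sm}^Z$ quantifies over an arbitrary binary term $t$ precisely so that the term needed on the other coordinate gets absorbed.

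For Relational Axiom 1, take $t$ with $t(a_1,c(a_1^{n-1}a_2))=a_2$. Then $t\bigl((a_1,b_2),c((a_1,b_2)^{n-1}(a_2,b_1))\bigr)=(a_2,t(b_2,c(b_2^{n-1}b_1)))\in R$. By cyclicity, $c(b_2^{n-1}b_1)=c(b_1b_2^{n-1})$ is the sm-pattern with $k=1$. Hence $b_2\in\Sg(b_1,t(b_2,c(b_1b_2^{n-1})))\subseteq[a_2]R$.

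For Relational Axiom 2, the as-definition absorbs no binary term, so $t$ must be neutralized on the second coordinate by feeding it equal arguments there. That is the role of $(a_1,b_1)$:
$t\bigl(c((a_1,b_2)(a_1,b_1)^{n-1}),c((a_1,b_2)(a_1,b_1)^{n-2}(a_2,b_1))\bigr)=(a_2,c(b_2b_1^{n-1}))$. Then $b_2\in\Sg(b_1,c(b_1^{n-1}b_2))\subseteq[a_2]R$.

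For Relational Axiom 3, one application of $c$ with multiplicities $k,k,n-2k$ (e.g.\ $n/3\le k<n/2$) is not enough; the binary terms must be chained. First take $t_1$ from $a_1\rightarrow_{sm}^Z a_2$. Next take $t_2$ from $b_1\rightarrow_{sm}^Z b_2$, applied with the binary term $t_1$. Finally apply $c_1\rightarrow_{sm}^Z c_2$ with $t_3(x,y)=t_2(x,t_1(x,y))$ and $k'=n-2k$. Each step uses that the relevant fibre of $R$ is a subuniverse.

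\textbf{Two justifications in Base Axiom 2 are wrong, though the conclusions are right.} First, a subalgebra with a majority term need not be two-element; $\mathbf{M}^2$, for $\mathbf{M}$ the two-element majority algebra, is a counterexample. Only $\Sg(a,b)=\{a,b\}$ follows.

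Second, cyclic is not symmetric, so a cyclic term on the two-element majority algebra need not be a threshold function. On $7$ variables, ``output $1$ iff the $1$-coordinates contain some $\{i,i+1,i+3\}$ mod $7$'' (the Fano plane) is cyclic, monotone and self-dual, but it is not the majority of $7$.

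What you actually need, $c(a^kb^{n-k})=b$, follows from monotonicity, as in the paper. The binary term $p(x,y)=c(x^ky^{n-k})$ is a projection. If $p=x$, a cyclic shift gives $c(a^{n-k}b^k)=b$ while $c(a^kb^{n-k})=a$. This contradicts monotonicity, since $a^{n-k}b^k\le a^kb^{n-k}$ coordinatewise. The case $n=2k$ cannot occur either: cyclicity would make $c(x^ky^k)$ a commutative projection.
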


\begin{proof}
{\bf Base Axiom 1.} Let $\m a\in\vr v$, let $\m B$ be an affine subalgebra of $\m A$ and let $c$ be a cyclic term of $\vr v$. If $\m a$ is the full idempotent term reduct of some faithful $\m r$-module, then we know \[c(x_1,\dots,x_n)=\sum_{i=1}^n\alpha x_i\] for some $\alpha\in R$ such that $n\alpha=1$ in $\m r$. If we iterate the full composition of $c$ with itself, i.e., if $c_1:=c$ and $c_{k+1}:=c\lhd c_k$, inductively we obtain
\[c_k(x_1,\dots,x_{n^k})=\sum_{i=1}^{n^k}\alpha^k x_i,\]
where we know $n^k\alpha^k=1^k=1$.

Now, $\alpha$ is an endomorphism of the finite Abelian group $(A;+)$, so, just like any mapping of a finite set into itself, there exists an idempotent power (for composition) of $\alpha$, i.e., there exists some $m$ such that $\alpha^{2m}=\alpha^m$. Let us compute 
\begin{align*}
c_m(a,\dots,a,b)&=c_m(c_m(a,\dots,a,b),c_m(a,\dots,a,b),\dots,c_m(a,\dots,a,b))\\ &=n^m\alpha^m(1-\alpha^m) a + n^m \alpha^{2m} b\\
& = n^m(\alpha^m-\alpha^m) a + n^m\alpha^m b =  b.
\end{align*}
Moreover, for each $k$ we have that 
\[
c_{k+1}(a,\dots,a,b)=c(a,\dots,a,c_k(a,\dots,a,b))\in \Sg(a,c_k(a,\dots,a,b)),
\]
so, inductively, each $c_k(a,\dots,a,b)\in \Sg(a,c(a,\dots,a,b))$. We obtain, as desired, $b=c_m(a,\dots,a,b)\in \Sg(a,c(a,\dots,a,b))$, and so $a\rightarrow_{as}^Z b$.

{\bf Base Axiom 2.} Let $\m a\in\vr v$, let $\m B$ be a majority subalgebra of $\m a$ and $a,b\in B$. Since $\m a$ is a minimal Taylor algebra and there is a ternary term which acts as the majority on $\{a,b\}$, we know that $\Sg(a,b) = \{a,b\}$, so we may as well assume $B = \{a,b\}$. Let $c(x_1,\dots,x_n)$ be a cyclic term on $\vr v$ and $k\leq n/2$. Consider 
\[p(x,y)=c(x^ky^{n-k}).\] (We're using the notation we established just after Definition~\ref{ouredgesdef}). As the two-element majority algebra only has the trivial binary operations, $p(x,y)$ it is either equal to $x$ or to $y$ on $\{a,b\}$. 

If $p(x,y)=x$, by the cyclic equations, 
\[c(y^{n-k}x^k)=x,\] 
which implies that $k\neq n-k$ (that is $k<n/2<n-k$). Moreover, we would get 
\begin{align*}
c(x^ky^{n-k})&=x,\\
c(x^{n-k}y^k)&=y\text{ and}\\
c(x^n)&=x
\end{align*}
which, together with $k<n-k<n$ contradicts the well-known fact that the term operations on the two-element majority algebra are monotone with respect to order $a<b$.

Hence, $p(x,y)=y$. Again, by the cyclic equations, 
\[c(y^{n-k}x^k)=y,\] 
which implies that $k\neq n-k$ (that is $k<n-k$). 
Therefore, \[c(a^kb^{n-k}) = b.\] To prove the desired conclusion $a\rightarrow_{sm}^Z b$, we need to prove for any binary term $t(x,y)$ that \[b \in \Sg(a,t(b,c(a^kb^{n-k}))) = \Sg(a,t(b,b))=\Sg(a,b),\] 
which is trivially true.

{\bf Base Axiom 3.} If $\m a\in\vr v$ and $\{a,b\}\in \Sub a$ is a two-element meet-semilattice with absorbing element $b$ and neutral element $a$, then any cyclic term $c$ of $\vr v$, when interpreted in the semilattice $\{a,b\}$, must have every variable essential. Hence $c(x_1,\dots,x_n)$ equals on $\{a,b\}$ to the meet of all its variables. So $c(a,a,\dots,a,b) = b$ and $a\rightarrow_{as}^Z b$. Also, for any binary term $t(x,y)$ and $k\leq n/2$, \[t(b,c(a^kb^{n-k})) = t(b,b) = b,\] 
so 
\[b \in \Sg(a,b) = \Sg(a, t( b, c(a^kb^{n-k}) ) ).\]
Thus $a\rightarrow_{sm}^Z b$, too, so $a\rightarrow_{s}^Z b$, as required.

{\bf Homomorphism Axiom 1.} Let $\m a,\m b\in\vr v$, let $\varphi:\m a\rightarrow\m b$ be a homomorphism, and let $a\rightarrow_{as}^Z b$ or $a\rightarrow_{sm}^Z b$. If $c(x_1,\dots,x_n)$ is any cyclic term of $\vr v$, $k\leq n/2$ and $t(x,y)$ is any binary term, we have that either $b\in \Sg^{\m a}(a,c(a^{n-1}b))$, or $b\in\Sg^{\m a}(a,t(b,c(a^kb^{n-k})))$. In the first case this implies $\varphi(b)\in \Sg^{\m b}(\varphi(a),c(\varphi(a)^{n-1}\varphi(b)))$, while in the second case this implies $\varphi(b)\in\Sg^{\m b}(\varphi(a),t(\varphi(b),c(\varphi(a)^k\varphi(b)^{n-k})))$. Homomorphism Axiom 1 follows directly.

{\bf Homomorphism Axiom 2.} Let $\m a,\m b\in\vr v$, let $\varphi:\m a\rightarrow\m b$ be a homomorphism, and let $\varphi(a)\rightarrow_{as}^Z \varphi(b)$ or $\varphi(a)\rightarrow_{sm}^Z \varphi(b)$. Select $b'\in\varphi^{-1}(\varphi(b))$ such that $\Sg(a,b')$ is minimal (with respect to inclusion) among the sets $\{\Sg(a,c):\varphi(c)=\varphi(b)\}$. 

If $\varphi(a)\rightarrow_{as}^Z \varphi(b)$ and $c(x_1,\dots,x_n)$ is a cyclic term of $\vr v$, then there exists a binary term $p(x,y)$ such that
\[p^{\m b}(\varphi(a),c^{\m b}( \varphi(a)^{n-1}\varphi(b) ))=\varphi(b).\]
Now we denote $b'':=p^{\m a}(a,c^{\m a}(a^{n-1}b'))$ and compute
\begin{align*}
\varphi(b'')&=\varphi(p^{\m a}(a,c^{\m a}(a^{n-1}b')))\\
&=p^{\m b}(\varphi(a),c^{\m b}(\varphi(a)^{n-1}\varphi(b')))\\
&=p^{\m b}(\varphi(a),c^{\m b}( \varphi(a)^{n-1}\varphi(b) ))=\varphi(b).
\end{align*}

By its definition, $b''\in\Sg(a,b')$ and also $b''\in \Sg(a,c^{\m a}(a^{n-1}b'))$, so $\Sg(a,b'')\subseteq\Sg(a,b')$. On the other hand, from $\varphi(b'')=\varphi(b)$, by the choice of $b'$ it can't happen that $\Sg(a,b'')\subsetneq\Sg(a,b')$, so $b'\in\Sg(a,b'')$. Hence, $b'\in\Sg(a,c^{\m a}(a^{n-1}b'))$ and thus $a\rightarrow_{as}^Z b'$.

On the other hand, if $\varphi(a)\rightarrow_{sm}^Z \varphi(b)$, $c(x_1,\dots,x_n)$ is a cyclic term of $\vr v$, $k\leq n/2$ and $t(x,y)$ is any binary term, then there exists a binary term $p(x,y)$ such that 
\[p^{\m b}(\varphi(a),t^{\m b}(\varphi(b),c^{\m b}( \varphi(a)^{k}\varphi(b)^{n-k}) ))=\varphi(b).\]
Denote $b'':=p^{\m a}(a,t^{\m a}(b',c^{\m a}(a^k(b')^{n-k}) ))$. Again we note that $b''\in\Sg(a,b')$ and also $b''\in \Sg(a,t^{\m a}(b',c^{\m a}(a^k(b')^{n-k})))$. We compute
\begin{align*}
\varphi(b'')&= \varphi(p^{\m a}(a,t^{\m a}(b',c^{\m a}(a^k(b')^{n-k}) )))\\
&= p^{\m b}(\varphi(a),t^{\m b}(\varphi(b'),c^{\m b}(\varphi(a)^k\varphi(b')^{n-k})))\\
&= p^{\m b}(\varphi(a),t^{\m b}(\varphi(b),c^{\m b}(\varphi(a)^k\varphi(b)^{n-k})))=\varphi(b).
\end{align*}

Since $b''\in \Sg(a,b')$, we obtain $\Sg(a,b'')\subseteq\Sg(a,b')$. On the other hand, from $\varphi(b'')=\varphi(b)$, by the choice of $b'$ it can't happen that $\Sg(a,b'')\subsetneq\Sg(a,b')$, so $b'\in\Sg(a,b'')$. Moreover, since $b''\in \Sg(a,t^{\m a}(b',c^{\m a}(a^k(b')^{n-k})))$, it follows that $b'\in \Sg(a,t^{\m a}(b',c^{\m a}(a^k(b')^{n-k})))$ holds, so by definition $a\rightarrow_{sm}^Z b'$.

{\bf Relational Axiom 1.} Suppose that $\m a,\m b\in\vr v$, $\m r\leq \m a\times\m b$, $a_1\rightarrow_{as}^Z a_2$ in $\m a$, $b_1\rightarrow_{sm}^Z b_2$ in $\m b$ and $(a_1,b_2),(a_2,b_1)\in R$. Let $c(x_1,\dots,x_n)$ be a cyclic term of $\vr v$. Since $a_2\in \Sg^{\m a}(a_1,c(a_1^{n-1}a_2))$, there exists a term $t(x,y)$ such that $t(a_1,c(a_1^{n-1}a_2))=a_2$. Applying this term in $\m r$ we compute
\[
t\left(\begin{bmatrix}
a_1\\
b_2
\end{bmatrix}
,c\left(
\begin{bmatrix}
a_1\\
b_2
\end{bmatrix}
^{n-1}
\begin{bmatrix}
a_2\\
b_1
\end{bmatrix}
\right)
\right)
=
\begin{bmatrix}
a_2\\
t(b_2,c(b_2^{n-1}b_1))
\end{bmatrix}.
\]
So, we conclude that $b_1,t(b_2,c(b_2^{n-1}b_1))\in [a_2]R$. By the definition of sm-edges, from $b_1\rightarrow_{sm}^Z b_2$ we have that $b_2\in \Sg^{\m b}(b_1,t(b_2,c(b_2^{n-1}b_1)))$. Since $[a_2]R$ is a subuniverse of $\m b$ which contains both $b_1$ and $t(b_2,c(b_2^{n-1}b_1))$, we obtain that $b_2\in [a_2]R$, i.e., $(a_2,b_2)\in R$, as desired.

{\bf Relational Axiom 2.} Suppose that $\m a,\m b\in\vr v$, $\m r\leq \m a\times\m b$, $a_1\rightarrow_{as}^Z a_2$ in $\m a$, $b_1\rightarrow_{as}^Z b_2$ in $\m b$, $(a_1,b_1),(a_1,b_2),(a_2,b_1)\in R$ and let $c(x_1,\dots,x_n)$ be a cyclic term for $\vr v$. Then there exists a term $t(x,y)$ so that $t(a_1,c(a_1^{n-1}a_2)) = a_2$. We compute in $\m r$:
\begin{align*}
	t\left(
	c\left(\begin{bmatrix}
		a_1\\
		b_2
	\end{bmatrix}
	\begin{bmatrix}
		a_1\\
		b_1
	\end{bmatrix}^{n-1}\right),
	c\left(
	\begin{bmatrix}
		a_1\\
		b_2
	\end{bmatrix}
	\begin{bmatrix}
		a_1\\
		b_1
	\end{bmatrix}^{n-2}
	\begin{bmatrix}
		a_2\\
		b_1
	\end{bmatrix}
	\right)
	\right)
	=&
	\begin{bmatrix}
		t(a_1,c(a_1^{n-1}a_2)\\
		t(c(b_2b_1^{n-1}),c(b_2b_1^{n-1}))
	\end{bmatrix}\\
	=&
	\begin{bmatrix}
		a_2\\
		c(b_2b_1^{n-1})
	\end{bmatrix}.
\end{align*}

So, $c(b_1^{n-1}b_2)=c(b_2b_1^{n-1})\in [a_2]R$ and we already know that also $b_1\in [a_2]R$. By the definition of as-edges, from $b_1\rightarrow_{as}^Z b_2$ we have that $b_2\in \Sg^{\m b}(b_1,c(b_1^{n-1}b_2))$. Since $[a_2]R$ is a subuniverse of $\m b$ which contains both $b_1$ and $c(b_1^{n-1}b_2)$, we obtain that $b_2\in [a_2]R$, i.e., $(a_2,b_2)\in R$, as desired.

{\bf Relational Axiom 3.} Suppose that $\m a,\m b, \m c\in\vr v$, $\m r\leq \m a\times\m b\times \m c$, $a_1\rightarrow_{sm}^Z a_2$ in $\m a$, $b_1\rightarrow_{sm}^Z b_2$ in $\m b$ and $c_1\rightarrow_{sm}^Z c_2$ in $\m c$. Moreover, suppose that $(a_1,b_2,c_2),$ $(a_2,b_1,c_2),$ $(a_2,b_2,c_1)\in R$, that $c(x_1,\dots,x_n)$ is a cyclic term for $\vr v$, where $n\geq 5$ and that $n/3\leq k< n/2$. Consequently, $0<n-2k< n/2$. Since $a_1\rightarrow_{sm}^Z a_2$, there exists a binary term $t_1(x,y)$ such that $t_1(a_1,c(a_1^ka_2^{n-k}))=a_2$. We compute in $\m r$:
\begin{align*}
t_1\left(
\begin{bmatrix}
	a_1\\
	b_2\\
	c_2
\end{bmatrix}
,
c\left(
\begin{bmatrix}
	a_1\\
	b_2\\
	c_2
\end{bmatrix}^k
\begin{bmatrix}
	a_2\\
	b_1\\
	c_2
\end{bmatrix}^k
\begin{bmatrix}
	a_2\\
	b_2\\
	c_1
\end{bmatrix}^{n-2k}
\right)
\right)
=&
\begin{bmatrix}
	t_1(a_1,c(a_1^ka_2^{n-k})\\
	t_1(b_2,c(b_2^kb_1^kb_2^{n-2k}))\\
	t_1(c_2,c(c_2^{2k}c_1^{n-2k}))
\end{bmatrix}\\
=&
\begin{bmatrix}
	a_2\\
	t_1(b_2,c(b_1^kb_2^{n-k}))\\
	t_1(c_2,c(c_1^{n-2k}c_2^{2k}))
\end{bmatrix}
\in R.
\end{align*}

We have $b_1\rightarrow_{sm}^Z b_2$, from which we have that there exists some binary term $t_2(x,y)$ such that $t_2(b_1,t_1(b_2,c(b_1^kb_2^{n-k})))=b_2$. Again we compute in $\m r$:
\begin{align*}
t_2\left(
\begin{bmatrix}
	a_2\\
	b_1\\
	c_2
\end{bmatrix}
,
\begin{bmatrix}
	a_2\\
	t_1(b_2,c(b_1^kb_2^{n-k}))\\
	t_1(c_2,c(c_1^{n-2k}c_2^{2k}))
\end{bmatrix}
\right)
=&
\begin{bmatrix}
	a_2\\
	t_2(b_1,t_1(b_2,c(b_1^kb_2^{n-k})))\\
	t_2(c_2,t_1(c_2,c(c_1^{n-2k}c_2^{2k})))
\end{bmatrix}\\
=&
\begin{bmatrix}
	a_2\\
	b_2\\
	t_2(c_2,t_1(c_2,c(c_1^{n-2k}c_2^{2k})))
\end{bmatrix}\in R.
\end{align*}

Now denote by $t_3(x,y):=t_2(x,t_1(x,y))$. The above computation yields to the conclusion that $(a_2,b_2,t_3(c_2,c(c_1^{n-2k}c_2^{2k})))\in R$. Since $c_1\rightarrow_{sm}^Z c_2$ and $n-2k<n/2$, we obtain that $c_2\in\Sg(c_1,t_3(c_2,c(c_1^{n-2k}c_2^{2k})))$. We also know, by idempotence, that $S:=\{x\in C:(a_2,b_2,x)\in R\}$ is a subuniverse of $C$ and that $c_1,t_3(c_2,c(c_1^{n-2k}c_2^{2k}))\in S$, and hence $c_2\in S$. That means $(a_2,b_2,c_2)\in R$, as desired.
\end{proof}

\section{Basic consequences of Edge Axioms}

First we prove some basic consequences of the edge axioms. To shorten the formulations of theorems, for the rest of this section, we assume that $\vr v$ is a finitely generated pseudovariety of minimal Taylor algebras and that for each $\m a\in \vr v$ there are two directed graphs fixed, $as(\m a)=(A,\rightarrow_{as})$ and $sm(\m a)=(A,\rightarrow_{sm})$, so that the class $\{(\m a,as(\m a),sm(\m a):\m a\in\vr v)\}$ satisfies the Edge Axioms. (We note here that we only need $\m f_{\vr v}(2)$ to be finite, but in all applications of our theory we will have a finitely generated $\vr v$). We denote $\rightarrow_s:=\rightarrow_{as}\,\cap\,\rightarrow_{sm}$ and $\rightarrow_{asm}:=\rightarrow_{as}\,\cup\,\rightarrow_{sm}$, as before.

\begin{lm}\label{existsaterm}
Let $\m a\in \vr v$ and $a\rightarrow_{asm} b$ in $\m a$. Then there exists a term $t(x_1,\dots,x_n)$ such that $x_1$ and $x_n$ are essential variables of $t^{\sm a}$ and that $t^{\sm a}(a,c_2,\dots,c_{n-1},b)=b$ for some choice of $c_i\in A$.
\end{lm}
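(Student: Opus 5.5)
The plan is to obtain from the Relational Axioms a single term that takes a tuple containing $a$ to $b$, and then to read off essential variables from the equations it satisfies. Throughout I assume $a\neq b$. If $a=b$, I take $t$ to be any cyclic term of $\vr v$, whose variables are all essential in any nontrivial $\m a$ by cyclicity and idempotence; the degenerate case $|A|=1$ has to be excluded or read vacuously.

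\emph{Case $a\rightarrow_{as} b$.} I apply Relational Axiom 2 with $\m a=\m b$, both edges equal to $a\rightarrow_{as}b$, and
\[R:=\Sg^{\m a^2}\{(a,a),(a,b),(b,a)\}.\]
The axiom gives $(b,b)\in R$. Hence there is a ternary term $s$ with
\[s(a,a,b)=b \quad\text{and}\quad s(a,b,a)=b,\]
reading the first and second coordinates respectively. Since $s(a,a,a)=a\neq b$, the first equation shows that $s^{\m a}$ depends essentially on its third variable. The second equation shows that it depends essentially on its second variable. I then set $t(x_1,x_2,x_3):=s(x_2,x_1,x_3)$. Now $x_1$ and $x_3$ are essential, and $t(a,a,b)=s(a,a,b)=b$, so $t$ has the required form with $c_2=a$.

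\emph{Case $a\rightarrow_{sm} b$.} I apply Relational Axiom 3 with $\m a=\m b=\m c$, all three edges equal to $a\rightarrow_{sm}b$, and
\[R:=\Sg^{\m a^3}\{(a,b,b),(b,a,b),(b,b,a)\}.\]
The axiom gives $(b,b,b)\in R$. Hence there is a ternary term $s$ with
\[s(a,b,b)=s(b,a,b)=s(b,b,a)=b.\]
The key step is to show that $s^{\m a}$ has at least two essential variables. It cannot be constant, since it is idempotent and $|A|>1$. If it had exactly one essential variable it would be a projection $x_i$; but the equation with $a$ in position $i$ would then give $a=b$, a contradiction.

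So I choose an essential variable $x_i$ of $s^{\m a}$ and a different essential variable $x_j$. I take the equation in which $a$ sits in position $i$; in it, position $j$ carries $b$ and the remaining position carries $b$. Permuting variables so that $x_i$ becomes the first and $x_j$ the last yields a term $t$ with $x_1$ and $x_3$ essential and $t(a,b,b)=b$, i.e.\ $c_2=b$.

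The main obstacle is the sm-case. Relational Axiom 1 needs an as-edge on the other side, so it is unavailable there, and Relational Axiom 3 produces equations that are symmetric in all positions and do not single out essential variables directly. The counting argument above (an idempotent term that is not a projection has at least two essential variables, and every position carries $a$ in some equation) is what resolves this, and I expect it to go through as stated.
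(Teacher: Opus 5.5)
Your proof is correct and follows the paper's route. Relational Axiom 2 (resp.\ 3) is applied to the relation generated by $(a,a),(a,b),(b,a)$ (resp.\ $(a,b,b),(b,a,b),(b,b,a)$), and the essential variables of the resulting ternary term are read off using idempotence; the differences are cosmetic (your ordering of generators forces a variable swap in the as-case, and in the sm-case you use ``an idempotent operation with one essential variable is a projection'' instead of the paper's pairwise argument), and your explicit variable permutation and your remark that the $a=b$ case fails for a one-element algebra are valid details the paper leaves implicit.
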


\begin{proof}
If $a=b$ there is nothing to prove (use any Taylor operation of $\m a$, it is idempotent and has at least two essential variables), so we suppose that $a\neq b$.

Suppose first that $a\rightarrow_{as}b$ holds. From Relational Axiom 2 follows that $(b,b)\in\Sg^{\sm a^2}(\{(a,b),(a,a),(b,a)\})$. Therefore, there exists a ternary term $t$ such that $t^{\sm a}(b,a,a)=t^{\sm a}(a,a,b)=b$. By idempotence it follows that $t^{\sm a}(a,a,a)=a$, and hence the first and the third positions are both essential in the term operation $t^{\sm a}$.

Next, suppose that $a\rightarrow_{sm}b$. Then $(b,b,b)\in\Sg^{\sm a^3}(\{(a,b,b),(b,a,b),(b,b,a)\})$ follows from Relational Axiom 3. Therefore, there exists a ternary term $t(x,y,z)$ such that $t^{\sm a}(a,b,b)=t^{\sm a}(b,a,b)=t^{\sm a}(b,b,a)=b$. By idempotence, $t^{\sm a}(a,a,a)=a$, hence $t^{\sm a}(a,b,a)$ can not be simultaneously equal to $t^{\sm a}(b,b,a)=b$ and to $t^{\sm a}(a,a,a)=a$. Thus, at least one of the first two positions of $t(x,y,z)$ is essential in $t^{\sm a}$, and the same argument shows that in any pair of variables of $t(x,y,z)$ at least one is essential in $t^{\sm a}$. It follows that $t^{\sm a}$ has at least two essential variables, and thus $t^{\sm a}(a,b,b)=t^{\sm a}(b,a,b)=t^{\sm a}(b,b,a)=b$ finishes the lemma.
\end{proof}

Lemma~\ref{existsaterm} has an easy, but useful corollary:

\begin{cor}\label{noedge}
Let $\m a\in\vr v$ and $a\rightarrow_s b$ in $\m a$. Then $b\not\rightarrow_{asm} a$.
\end{cor}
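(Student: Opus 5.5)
We argue by contradiction: assume $a\rightarrow_s b$ and $b\rightarrow_{asm} a$. The case $a=b$ has to be excluded implicitly (the statement is about distinct elements; if $a=b$ the edge relations are trivially symmetric), so we take $a\neq b$. First, $a\rightarrow_s b$ means that $a\rightarrow_s^B b$ by Lemma~\ref{therecanbonly1sedge}. Hence, by Corollary~\ref{sedge2abs}, $\{a,b\}$ is a subuniverse of $\m a$ and $\{b\}$ is a strongly projective subuniverse of $\{a,b\}$. Concretely, every term operation restricted to $\{a,b\}$ returns $b$ as soon as one of its essential (on $\{a,b\}$) coordinates receives $b$.

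Next, apply Lemma~\ref{existsaterm} to the edge $b\rightarrow_{asm} a$. Its proof gives more than the statement: it produces a ternary term $t$ evaluated only on elements of $\{a,b\}$. In the $as$ case, $t(a,b,b)=t(b,b,a)=a$. In the $sm$ case, $t(b,a,a)=t(a,b,a)=t(a,a,b)=a$. I will use these concrete equalities rather than the bare statement of the lemma, since the auxiliary elements $c_i$ there all lie in $\{a,b\}$.

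We then restrict $t$ to the two-element algebra $\{a,b\}$. In the $as$ case, $t^{\{a,b\}}(a,b,b)=a$ while $t(b,b,b)=b$, so the first coordinate is essential on $\{a,b\}$. Putting $b$ there, strong projectivity of $\{b\}$ forces $t(b,b,a)=b$, contradicting $t(b,b,a)=a$. In the $sm$ case, the proof of Lemma~\ref{existsaterm} shows that among any two coordinates at least one is essential. So, say, the first coordinate is essential on $\{a,b\}$, and then $t(b,a,a)=b\neq a$, again a contradiction. Each remaining choice of essential coordinate gives the analogous contradiction by symmetry.

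The only delicate point is making sure essentiality is taken for $t$ restricted to $\{a,b\}$ rather than on all of $A$, because strong projectivity is applied inside the subalgebra $\{a,b\}$. The essentiality arguments of Lemma~\ref{existsaterm} only compare values on tuples from $\{a,b\}$, so they transfer verbatim. Alternatively, one can use Relational Axioms 2 and 3 directly inside $\{a,b\}^2$ and $\{a,b\}^3$ to produce these terms.
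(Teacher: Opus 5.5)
Your argument is correct and is essentially the paper's proof. Lemma~\ref{therecanbonly1sedge} and Corollary~\ref{sedge2abs} make $\{b\}$ strongly projective in the subalgebra $\{a,b\}$, and the term from Lemma~\ref{existsaterm} then returns $a$ even though $b$ sits in an essential coordinate. The only difference is at your ``delicate point'': the paper first uses Proposition~\ref{sameinsubalg} to get $b\rightarrow_{asm} a$ inside the two-element subalgebra and applies Lemma~\ref{existsaterm} there as a black box, whereas you reopen the lemma's proof to check that only values on $\{a,b\}$ are used, which also works.
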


\begin{proof}
Applying Lemma~\ref{therecanbonly1sedge} and Corollary~\ref{sedge2abs} we obtain that $B=\{a,b\}$ is a subuniverse of $\m a$, while $\{b\}$ is a strongly projective subuniverse of $\m b$. Suppose that $b\rightarrow_{asm} a$ in $\m a$, by Proposition~\ref{sameinsubalg} we obtain that $b\rightarrow_{asm} a$ in $\m b$. By Lemma~\ref{existsaterm}, there exists a term $t$ such that $t^{\sm b}(b,?,?,\dots,?)=a$, where the first variable is essential in $t^{\sm b}$. This contradicts the fact that $\{b\}$ is a strongly projective subuniverse of $\m b$.
\end{proof}

Next we consider the following stronger variants of the Base Axioms:

\begin{bax1}
Let $\m a\in\vr v$, and let $\m b\leq \m a$ be an affine subalgebra. Then for all $a,b\in B$, $a\neq b$, $a\rightarrow_{as} b$ and $a\not\rightarrow_{sm} b$ hold.
\end{bax1}

\begin{bax1}
Let $\m a\in \vr v$, and let $\m b\leq \m a$ be a subalgebra with a majority term. Then for all $a,b\in B$, $a\neq b$, $a\rightarrow_{sm} b$ and $a\not\rightarrow_{as} b$ hold.
\end{bax1}

\begin{bax1}
Let $\m a\in \vr v$, and let $a\neq b$ in $A$. There exist a term $t(x,y)$ such that $t$ acts on the set $\{a,b\}$ as a semilattice operation with the absorbing element $b$ iff $a\rightarrow_s b$. If $a\rightarrow_s b$, then $b\not\rightarrow_{asm} a$.
\end{bax1}

\begin{prp}
Stronger Base Axioms 1-3 are also satisfied in $\vr v$.
\end{prp}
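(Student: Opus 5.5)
We verify the three Stronger Base Axioms in turn. In each case the positive edge is supplied by the corresponding Base Axiom, and the missing edge is ruled out by producing a term that would be impossible on the relevant subalgebra.

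\emph{Stronger Base Axiom 3.} The equivalence is Lemma~\ref{therecanbonly1sedge} combined with Definition~\ref{Bulsedgedef}; the direction $(\Rightarrow)$ is also just Base Axiom 3. The last sentence, that $a\rightarrow_s b$ implies $b\not\rightarrow_{asm} a$, is Corollary~\ref{noedge}.

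\emph{Stronger Base Axiom 1.} Let $\m b\leq\m a$ be affine and $a\neq b$ in $B$. Base Axiom 1 gives $a\rightarrow_{as} b$. Suppose, for a contradiction, that $a\rightarrow_{sm} b$ in $\m a$.
\begin{enumerate}
\item Pass to $\m c:=\Sg(a,b)\leq\m b$, which is again affine. By Proposition~\ref{sameinsubalg}, $a\rightarrow_{sm} b$ holds in $\m c$ as well.
\item Apply Relational Axiom 3 to $R:=\Sg^{\m c^3}(\{(a,b,b),(b,a,b),(b,b,a)\})$, exactly as in the proof of Lemma~\ref{existsaterm}. This gives a ternary term $t$ with $t(a,b,b)=t(b,a,b)=t(b,b,a)=b$.
\item In the faithful-module representation, $t(x,y,z)=r_1x+r_2y+r_3z$ with $r_1+r_2+r_3=1$. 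Then $t(a,b,b)=b$ says $r_1(a-b)=0$, and likewise $r_2(a-b)=0$ and $r_3(a-b)=0$. Summing gives $a-b=0$, a contradiction.
\end{enumerate}

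\emph{Stronger Base Axiom 2.} Let $\m b\leq\m a$ have a majority term and $a\neq b$ in $B$. Base Axiom 2 gives $a\rightarrow_{sm} b$. As in the proof of Base Axiom 2, $\Sg(a,b)=\{a,b\}$ is a two-element majority algebra. Suppose $a\rightarrow_{as} b$.
\begin{enumerate}
\item By Proposition~\ref{sameinsubalg}, $a\rightarrow_{as} b$ holds in $\{a,b\}$.
\item Relational Axiom 2, as in Lemma~\ref{existsaterm}, gives a ternary term $t$ with $t(b,a,a)=t(a,a,b)=b$ on $\{a,b\}$.
\item The term operations of the two-element majority algebra are monotone and self-dual. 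Self-duality under the swap $a\leftrightarrow b$ gives $t(a,b,b)=t(b,b,a)=a$.
\item Monotonicity with respect to $a<b$ then gives $b=t(b,a,a)\leq t(b,b,a)=a$, a contradiction.
\end{enumerate}
If one prefers to avoid the self-duality fact, there is an alternative: $t(x,x,y)$ and $t(y,x,x)$ are binary operations on the majority algebra, hence projections, and both must equal $y$. Then $t(x,y,z)$ agrees with $z$ whenever $x=y$ and with $x$ whenever $y=z$. A monotone idempotent ternary operation with these properties is the minority operation, which is not monotone, again a contradiction.

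I expect the main care to be needed in these contradiction arguments, specifically in step 3 of Stronger Base Axiom 2. The self-duality of term operations of the two-element majority algebra is well known, but if it is unclear I would fall back on the projection argument just given.
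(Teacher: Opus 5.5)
Your treatment of Stronger Base Axiom 3 matches the paper's. For Stronger Base Axioms 1 and 2 your proof is correct but takes a different route.

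\textbf{The paper's route.} It argues by symmetry. Suppose $a\rightarrow_{sm} b$ inside an affine subalgebra. Combined with $a\rightarrow_{as} b$ from Base Axiom 1, this gives $a\rightarrow_s b$. Corollary~\ref{noedge} then yields $b\not\rightarrow_{as} a$, which contradicts Base Axiom 1 applied to the pair $(b,a)$. The majority case is the same with the roles of as and sm swapped.

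\textbf{Your route.} You extract a witnessing ternary term from Relational Axiom 3 (respectively 2). You then check directly that no term operation of an affine algebra (a linear combination with coefficients summing to $1$) or of the two-element majority algebra (monotone and self-dual) can satisfy those identities.

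\textbf{Comparison.} The paper's argument is shorter and uniform. It relies on the fact that Base Axioms 1 and 2 give edges in both directions. However, it depends on Corollary~\ref{noedge}, and hence on Lemma~\ref{therecanbonly1sedge}, Corollary~\ref{sedge2abs} and ultimately Theorem~\ref{2absstrproj} (binary absorption equals strong projectivity in minimal Taylor algebras). Your argument bypasses that chain entirely. It uses only the Relational Axioms, Proposition~\ref{sameinsubalg}, and the explicit description of the two relevant clones. It also makes clear which axiom clashes with which structure: Relational Axiom 3 with affineness, and Relational Axiom 2 with the majority algebra.

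\textbf{A small inaccuracy in your fallback for Axiom 2.} The identities $t(x,x,y)\approx y$ and $t(x,y,y)\approx x$ do not force $t$ to be the minority operation, because the values at $(a,b,a)$ and $(b,a,b)$ are left undetermined. The contradiction with monotonicity is immediate anyway: $(a,a,b)\leq(a,b,b)$ coordinatewise, while $t(a,a,b)=b>a=t(a,b,b)$.
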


\begin{proof}
{\bf Stronger Base Axiom 3}: The first statement is Lemma~\ref{therecanbonly1sedge}, while the second statement is Corollary~\ref{noedge}.

{\bf Stronger Base Axioms 1 and 2}: Base Axiom 1 guarantees $a\rightarrow_{as} b$ for all $a,b\in B$. Assume that there exist $a,b\in B$ such that $a\rightarrow_{sm} b$, and therefore $a\rightarrow_s b$. Stronger Base Axiom 3 implies that $b\not\rightarrow_{as} a$, contradicting Base Axiom 1. The proof of Stronger Base Axiom 2 is analogous to the proof of Stronger Base Axiom 1, we just flip as and sm edges and use Base Axiom 2 instead of Base Axiom 1.
\end{proof}

The next proposition is a part of A. Bulatov's Proposition 24 from \cite{BulatovGraph1}. However, we give the proof from \cite{Zebnotes}, since it is both shorter and easier.

\begin{prp}\label{universalmeet}
There exists a binary term $f(x,y)$ such that $\vr v\models f(x,y)\approx  f(x,f(x,y))$ and for any $\m a\in \vr v$ and any $a,b\in A$, either $f(a,b)=a$ or $a\rightarrow_s f(a,b)$. Moreover, whenever $a\rightarrow_s b$, then $f(a,b)=f(b,a)=b$.
\end{prp}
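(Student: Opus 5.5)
The plan is to build $f$ inside the free algebra $\m f:=\m f_{\vr v}(x,y)$, which is finite and belongs to $\vr v$. Since everything is decided in $\m f$, every property of $f$ will transfer to an arbitrary $\m a\in\vr v$ and $a,b\in A$ via the homomorphism $x\mapsto a$, $y\mapsto b$. By Lemma~\ref{therecanbonly1sedge} and Corollary~\ref{sedge2abs}, $a\rightarrow_s d$ for $d\neq a$ means exactly that $\{a,d\}$ is a subuniverse on which some binary term is a semilattice operation with absorbing element $d$. So the core of the proof is to find a binary term $f$ with two properties. First, either $f(x,y)=x$ in $\m f$, or $\{x,f(x,y)\}$ is a two-element subuniverse of $\m f$ with $\{f(x,y)\}\lhd_2\{x,f(x,y)\}$. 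Second, $f$ is idempotent in its second argument in the sense of the identity $f(x,f(x,y))\approx f(x,y)$.

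\textbf{Step 1: the identity and the behaviour on $s$-edges.} Start with a cyclic term $c$ of $\vr v$ and put $g(x,y):=c(x,\dots,x,y)$. If $a\rightarrow_s b$, then $\{a,b\}$ is a semilattice on which every variable of $c$ is essential. Hence $c$ acts as the meet there, and $g(a,b)=g(b,a)=b$. For any binary term $s$, let $s^{(1)}:=s$ and $s^{(k+1)}(x,y):=s(x,s^{(k)}(x,y))$. Since $\m f$ is finite, the map $y\mapsto s(x,y)$ is a unary polynomial of $\m f$, so it has an idempotent power $s^{(m)}$. Taking that power gives $s^{(m)}(x,s^{(m)}(x,y))\approx s^{(m)}(x,y)$ in $\vr v$. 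Whenever $s(a,b)=s(b,a)=b$ on an $s$-edge, idempotence gives $s^{(m)}(a,b)=s^{(m)}(b,a)=b$ as well. So whatever term we finally pick, we pass to an idempotent power of it (after composing it with $g$, see below), and the first and last requirements of the proposition are automatic.

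\textbf{Step 2: the dichotomy.} Among all binary terms $h$ with $h(x,y)\neq x$ in $\m f$, choose one for which $B:=\Sg^{\sm f}(x,h)$ is minimal by inclusion. Then every $e\in B\setminus\{x\}$ satisfies $\Sg(x,e)=B$. I will try to show that such a minimal $B$ is forced to be a two-element semilattice $\{x,e\}$ with absorbing element $e$, for a suitable $e$.

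The first thing I will try is to apply the cyclic term inside $B$. For $e\in B\setminus\{x\}$, the element $c(x,\dots,x,e)$ lies in $B$. Using the Relational Axioms applied to subalgebras of $\m f\times\m f$ generated by $(x,e),(e,x)$, together with Lemma~\ref{existsaterm}, I will argue that either some $e\in B\setminus\{x\}$ gives a subuniverse $\{x,e\}$ on which $c$ is a semilattice operation absorbing into $e$, or $B$ has an affine or majority quotient separating $x$ from the rest. If $B$ has such a quotient, I will rule it out by the minimality of $B$ combined with the absence of a binary absorbing subuniverse; this uses Theorem~\ref{2absstrproj} and Stronger Base Axioms 1 and 2.

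Once such an $e=e(x,y)$ is found, I set $f$ to be the idempotent power from Step 1 of the term $g(x,e(x,g(x,y)))$. The outer and inner uses of $g$ keep the $s$-edge behaviour, and $\{x,e\}$ closed with $e$ absorbing is preserved under the iteration. If instead no term $h$ with $h\neq x$ yields such a semilattice, I take $f$ to be the idempotent power of $g$ and argue that the case $f(a,b)\neq a$ then forces an $s$-edge from $a$ to $f(a,b)$ through the same minimality argument applied in $\Sg^{\sm a}(a,b)$.

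\textbf{Main obstacle.} The hard step is Step 2: proving that a minimal $\Sg(x,e)$ containing a point $e\neq x$ actually contains a two-element semilattice edge out of $x$. This is where the structure of minimal Taylor algebras must enter, and I expect it to need a careful case analysis via the Four Types Theorem on $B$ together with Theorem~\ref{2absstrproj}. The rest of the argument (the idempotent power, transfer to an arbitrary $\m a$ along homomorphisms, and preservation of the behaviour on $s$-edges) is routine.
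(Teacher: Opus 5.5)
\textbf{There is a genuine gap.} Your Step 1 is fine, but Step 2 is not proved, and the route you sketch for it cannot work. Step 2 is the dichotomy, which is the whole content of the proposition.

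Take $\vr v$ generated by the two-element affine algebra $(\{0,1\};x+y+z)$, which is minimal Taylor. Every cyclic term there is $x_1+\dots+x_n$ with $n$ odd. So your $g(x,y)=c(x,\dots,x,y)$ is literally the projection $y$, and so are all its idempotent powers. Your fallback therefore gives $f(0,1)=1$, although $0\not\rightarrow_s 1$ by Stronger Base Axiom 1.

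Likewise, the free algebra $\m f_{\vr v}(x,y)$ is just $\{x,y\}$ and is affine. So the minimal $\Sg(x,h)$ with $h\neq x$ is affine, not a semilattice. No appeal to minimality, Theorem~\ref{2absstrproj}, or the Four Types Theorem can ``rule out'' the affine or majority case. In fact, by Stronger Base Axioms 1 and 2, any correct $f$ must act as the first projection on every affine algebra and every algebra with a majority term in $\vr v$. At the same time it must act as the meet on every s-edge. Nothing in your construction enforces this.

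\textbf{The missing idea.} Arrange a second identity, $f(f(x,y),x)\approx f(x,y)$, alongside $f(x,f(x,y))\approx f(x,y)$. Then for $d:=f(a,b)$ you get $f(a,d)=d=f(d,a)$. So $f$ itself acts on $\{a,d\}$ as a semilattice operation with absorbing element $d$, and Base Axiom 3 gives $a\rightarrow_s d$ whenever $d\neq a$. No structure theory is needed.

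The paper obtains both identities purely syntactically:
\begin{enumerate}
\item Let $t_k$ be the idempotent power, in the second variable, of $t(x,y)=c(x,y,\dots,y)$.
\item Symmetrize by $q(x,y):=t_k(x,t_k(y,x))$. Then $t_k(x,q(x,y))\approx q(x,y)$, and hence $q(q(x,y),x)\approx q(x,y)$.
\item Let $f:=q_k$ be the idempotent power of $q$ in the second variable. By induction, $q_i(q(x,y),x)\approx q(x,y)$ for all $i$.
\item Substituting $q_{k-1}(x,y)$ for $y$ gives $f(f(x,y),x)\approx f(x,y)$.
\end{enumerate}
All terms involved are binary $t$-terms in which both variables occur, so each acts as the meet on every s-edge. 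This gives $f(a,b)=f(b,a)=b$ there.

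On an affine algebra, $t_k=x+e(y-x)$ with $e^2=e$, so $q=x+e(1-e)(y-x)=x$. The symmetrization is exactly what removes the affine behaviour, and likewise the majority behaviour, that defeats your fallback.
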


\begin{proof}
Take any cyclic term of $\vr v$ and make all of its variables, except the first one, equal to obtain the term $t(x,y)$. For all $\m a\in\vr v$ and any $a,b\in A$ such that $a\rightarrow_s b$, Corollary \ref{sedge2abs} implies that $B=\{a,b\}$ is a subuniverse of $\m a$ and $\{b\}$ is a strongly projective subuniverse of $\m b$. Moreover, each variable of a cyclic term is essential (also in the subalgebra $\m b$), so $t(a,b)=t(b,a)=b$ for every $a\rightarrow_s b$ in $\m a$. Every term we will construct in this proof will be a two-variable term in the language $\{t\}$ in which the variables $x$ and $y$ will both occur, so the basic properties of semilattices imply that for all $\m a\in\vr v$, any $a,b\in A$ such that $a\rightarrow_s b$ and all terms, those terms act on $\{a,b\}$ as the semilattice operation with the absorbing element $b$.

Now we iterate the term $t$ in the first variable: 
\[t_1(x,y):= t(x,y)\text{ and }t_{i+1}(x,y):=t(x,t_i(x,y)).\] 
It is easy to compute that for $n:=|F_{\vr v}(2)|$ and $k=n!$ it holds that $\vr v\models t_k(x,t_k(x,y))\approx t_k(x,y)$.

Next, we denote $q(x,y):=t_k(x,t_k(y,x))$ and compute
\[
\vr v\models t_k(x,q(x,y))= t_k(x,t_k(x,t_k(y,x)))\approx t_k(x,t_k(y,x))=q(x,y),\]
and thus
\[
\begin{gathered}
\vr v\models q(q(x,y),x)= t_k(t_k(x,t_k(y,x)),t_k(x,t_k(x,t_k(y,x))))\approx\\
t_k(t_k(x,t_k(y,x)),t_k(x,t_k(y,x)))\approx t_k(x,t_k(y,x)) = q(x,y).
\end{gathered}
\]

Finally, we use another iteration in the first variable 
\[q_1(x,y):=q(x,y)\text{ and }q_{i+1}(x,y):=q(x,q_i(x,y)).\]
As before for $t_k$, $$\vr v\models q_k(x,q_k(x,y))\approx q_k(x,y).$$ 

We prove by an induction on $i$ that for all $i$, $\vr v\models q_i(q(x,y),x)\approx q(x,y)$. We proved that it holds for $q_1=q$, and suppose that it holds for $q_i$. Then 
\[\vr v\models q_{i+1}(q(x,y),x)= q(q(x,y),q_i(q(x,y),x))\approx q(q(x,y),q(x,y))\approx q(x,y).\]
By substituting $q_{k-1}(x,y)$ for $y$ in the identity $q_k(q(x,y),x)\approx q(x,y)$ we obtain
\[\vr v\models q_k(q(x,q_{k-1}(x,y)),x)\approx q(x,q_{k-1}(x,y)),\]
which can be written differently as
\[\vr v\models q_k(q_k(x,y),x)\approx q_k(x,y).\]

We choose for $f(x,y):=q_k(x,y)$. The identities $f(x,f(x,y))\approx f(x,y)\approx f(f(x,y),x)$ imply that for all $\m a \in \vr v$ and all $a,b\in A$, either $f(a,b)=a$, or $a\rightarrow_s f(a,b)$ in $\m a$. Moreover, we know that for all $\m a \in \vr v$ and all $a,b\in A$ such that $a\rightarrow_s b$ in $\m a$, $f(a,b)=f(b,a)=b$, so the proposition is proved.
\end{proof}

\section{Weak connectivity and binary absorption}

In this section, we apply our edges to derive simpler proofs of three major results from \cite{BulatovGraph1}, \cite{dreamteam} and \cite{BulatovGraph2}.

The next theorem and its proof are our version of Theorem 5 of \cite{BulatovGraph1}. Its proof is shorter than in \cite{BulatovGraph1}, and we use much less theory: in \cite{BulatovGraph1}, A. Bulatov uses the Tame Congruence Theory of D. Hobby and R. McKenzie \cite{tct} as well as three deep papers by K. Kearnes, by \'A. Szendrei and by M. Valeriote, while we need just Theorems~\ref{Taylor=cyclic} and \ref{affineeq} of the major theoretic results. Our proof first appeared in the online notes \cite{Zebnotes} by the first author, and is also similar to the proof of Theorem 3.2 in \cite{dreamteam}.

\begin{thm}\label{weakconnect}
Let $\m a\in \vr v$. Then $asm(\m a)$ is weakly connected.
\end{thm}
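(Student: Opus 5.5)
We argue by contradiction via a minimal counterexample. Let $\m a\in\vr v$ be of minimal size for which $asm(\m a)$ is not weakly connected. Every proper subalgebra $\m b<\m a$ has $asm(\m b)$ weakly connected, and by Proposition~\ref{sameinsubalg} its edges are edges of $asm(\m a)$. So every proper subuniverse lies inside one weak component of $asm(\m a)$. The same holds for proper quotients: if $\theta$ is a nontrivial proper congruence, then $asm(\m a/\theta)$ is weakly connected. Homomorphism Axiom 2 lifts each edge $\varphi(a)\rightarrow\varphi(b)$ to an edge $a\rightarrow b'$ with $b'\theta b$. Each $\theta$-class is a proper subuniverse, hence lies in one component. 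Together these make $asm(\m a)$ weakly connected, a contradiction. Hence we may assume the following.
\begin{enumerate}
\item $\m a$ is simple.
\item $H(\m a)$ is not a connected hypergraph; equivalently, for some $a,b$ in different components we have $\Sg(a,b)=A$.
\end{enumerate}
In fact, for any $a,b$ in different components, $\Sg(a,b)=A$, since otherwise $\Sg(a,b)$ would be a proper subuniverse meeting two components.

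The key step is to produce a binary relation forcing either tolerance structure or absorption. Fix $a,b$ in distinct components, with $A=\Sg(a,b)$. Take a cyclic term $c$ of prime arity $p>|A|$, by Theorem~\ref{Taylor=cyclic}. Consider $R=\Sg^{\m a^2}\{(a,b),(b,a)\}$; it is a subdirect, symmetric subuniverse. Theorem~\ref{connected-linked} shows that if $tol_1R$ is connected but not $A^2$, then $H(\m a)$ is connected, which is excluded. If $tol_1R$ is not connected, its link congruence is a proper congruence. By simplicity it is then the identity, so $R$ is the graph of an automorphism $\sigma$ swapping $a$ and $b$. If $tol_1R=A^2$, then by Theorem~\ref{connected-linked}(4) some $d$ satisfies $R[d]=A$.

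We treat the cases as follows.

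\textbf{Case $R[d]=A$.} Here the element $d$ is reached from both $a$ and $b$ by a common binary term $s$, with $s(a,b)=s(b,a)=d$. Applying Proposition~\ref{universalmeet}, the term $f$ gives either $f(a,d)=a$ or $a\rightarrow_s f(a,d)$. Iterating, we aim to show that $d$ lies in the component of $a$ and also in that of $b$. This uses that $\{x: (x,b)\in R\}$-type sets are subuniverses, and that a proper subuniverse containing $a$ and $d$ stays in $a$'s component unless it is all of $A$.

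\textbf{Case of an automorphism $\sigma$.} We consider the ternary relation, or more precisely $\Sg^{\m a^2}\{(a,a),(a,b),(b,a)\}$ and its iterates. The goal is to reach either a binary absorbing subuniverse or a relation as in Theorem~\ref{R3affine}. In the affine subcase Base Axiom 1 connects everything, and in the semilattice subcase Base Axiom 3 does.

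The main obstacle I expect is the final dichotomy: showing that when no proper subuniverse meets two components, $\m a$ is either affine or has a 2-element semilattice or majority configuration. I would first try to find a minimal counterexample among 2-generated simple algebras whose $H(\m a)$ is disconnected. From there, using Theorem~\ref{R3affine} on $S=\{(x,y,z):\ldots\}$ built from the cyclic term, the aim is to show that each section is a bijection, forcing $\m a$ affine and yielding the contradiction via Base Axiom 1.
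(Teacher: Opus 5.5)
Your reductions are sound and track the opening of the paper's proof. Minimality disposes of proper subalgebras and proper quotients, so $\m a$ is simple and $H(\m a)$ is disconnected. The trichotomy for $R=\Sg^{\m a^2}\{(a,b),(b,a)\}$ via Theorem~\ref{connected-linked} is exactly how the paper obtains its swapping automorphisms.

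The case $R[d]=A$ closes directly, without iterating the term $f$ of Proposition~\ref{universalmeet}. If $\Sg(a,d)$ and $\Sg(b,d)$ are both proper, then $a$ and $b$ are joined in $H(\m a)$. Otherwise, say $\Sg(a,d)=A$. Then $a,d\in[b]R$ gives $(b,b)\in R$, so some binary term acts as a semilattice on $\{a,b\}$, and Base Axiom 3 yields the contradiction.

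The genuine gap is the remaining case, where $R$ is the graph of an automorphism swapping $a$ and $b$. All the real work of the theorem lives there, and your proposal only states an aim. It also names the wrong object: $\Sg^{\m a^2}\{(a,a),(a,b),(b,a)\}$ is a binary relation that gives nothing new. Aiming for a binary absorbing subuniverse does not produce edges either. The paper's argument needs the following steps:
\begin{enumerate}
\item The involutive swapping automorphisms $\varphi_{cd}$ exist for every pair $c,d$ in different components. Composing them shows that $\aut a$ acts transitively.
\item One works with the symmetric ternary relation $S=\Sg^{\m a^3}\{(a,a,b),(a,b,a),(b,a,a)\}$.
\item Neither $(a,a,a)$ nor $(b,a,b)$ lies in $S$. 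Either membership yields a ternary term which, after applying $\varphi_{ab}$, acts as a majority operation on $\{a,b\}$. This contradicts Base Axiom 2, so the majority alternative enters essentially, not just the affine and semilattice ones you list.
\item Take a cyclic term of prime arity $p=3k+1$ and evaluate it at the patterns $x^ky^{k+1}z^k$ and $x^{k+1}y^{k-1}z^{k+1}$. This gives $\mathrm{pr}_1T\cap\mathrm{pr}_2T\neq\emptyset$ for $T(x,y)=S(x,y,x)$.
\item Disconnectedness of $H(\m a)$ and transitivity of $\aut a$ upgrade this to $\mathrm{pr}_1T=A$, hence $\mathrm{pr}_{i,j}S=A^2$ for all $i\neq j$.
\item Each section $S^c$ is the graph of a bijection. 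This is shown by excluding the alternative of a full fiber, using the previous steps and the symmetry of $S$.
\end{enumerate}
Only after these steps does Theorem~\ref{R3affine} apply to make $\m a$ affine, so that Base Axiom 1 gives the final contradiction. Without them, or a genuine substitute, the proposal does not prove the theorem.
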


\begin{proof}
We use an induction on $|A|$. When $|A|=2$, the Post lattice reveals that there are only four minimal clones with a Taylor term, the two semilattice clones, the affine clone and the clone generated by the majority operation. Moreover, any Taylor clone contains one of these four, so these are all minimal Taylor clones. These clones have weakly connected asm-graphs by Base Axioms 1-3.

For the inductive step, let $a,b\in A$ be two distinct elements. We have to prove that $a$ is weakly connected to $b$. First note that if $B=\Sg(a,b)\neq A$ then by the inductive assumption $a$ is weakly connected to $b$ in $asm(\m b)$, and then we can apply Proposition~\ref{sameinsubalg}. Moreover, we easily prove the following:

\begin{claim}
If $a$ and $b$ are connected in the hypergraph $H(\m a)$, then $a$ and $b$ are weakly connected in $asm(\m a)$.
\end{claim}
\begin{proof}[Proof of the claim]
The assumption implies that there exists a sequence $C_1,\dots,C_k$ of proper subuniverses of $A$ which connect $a$ to $b$. In other words, $a\in C_1$, $b\in C_k$ and for all $i<k$, $C_i\cap C_{i+1}\neq\emptyset$. Select $a=c_0,c_1,\dots,c_k=b$ so that for each $1\leq i<k$, $c_i\in C_i\cap C_{i+1}$. For each $i<k$, by the inductive assumption, $c_i$ and $c_{i+1}$ are weakly connected in $asm(\m c_{i+1})$. By Proposition~\ref{sameinsubalg}, $c_i$ and $c_{i+1}$ are also weakly connected in $asm(\m a)$, and the concatenation connects $a$ and $b$.
\end{proof}

\begin{claim}
If $\m a$ has a nontrivial congruence $\alpha$, then $a$ and $b$ are weakly connected in $asm(\m a)$.
\end{claim}
\begin{proof}[Proof of the claim]
By the inductive assumption, $[a]_\alpha$ and $[b]_{\alpha}$ are weakly connected in $asm(\m a/\alpha)$. This means that we can select some $\alpha$-blocks $[a]_\alpha=[c_0]_\alpha, [c_1]_\alpha,\dots,[c_k]_\alpha=[b]_\alpha$ such that for each $i<k$, either $[c_i]_\alpha\rightarrow_{asm}[c_{i_1}]_\alpha$, or $[c_{i_1}]_\alpha\rightarrow_{asm}[c_i]_\alpha$. Now, using the natural homomorphism from $\m a$ onto $\m a/\alpha$ and Homomorphism Axiom 2, we can select $d_i$ and $e_i$, $i\leq k$, so that $d_i,e_i\in [c_i]_\alpha$, $a=d_0$, $b=e_k$ and for all $i<k$ we have that at least one of $e_i\rightarrow_{asm}d_{i+1}$ and $d_{i+1}\rightarrow_{asm}e_i$ holds. Moreover, since each $[c_i]_\alpha$ is a subuniverse of $\m a$ which is strictly smaller, thus for all $i\leq k$, by the inductive hypothesis we know that $d_i$ and $e_i$ are weakly connected in $G_{asm}(\m a)$. Therefore, $a$ and $b$ are weakly connected, as desired.
\end{proof}

\begin{claim}
If $\m a$ has a nontrivial tolerance $\tau$, then $a$ and $b$ are weakly connected in $asm(\m a)$. 
\end{claim} 
\begin{proof}[Proof of the claim]
 By Claim 2, we can assume that $\m a$ is simple. Since the transitive closure of $\tau$ is a congruence, either this closure is $0_{\m a}$, in which case $\tau=0_{\m a}$, or this closure is $1_{\m a}$, in which case $\tau$ is connected. By Proposition~\ref{conntolH}, if $\tau$ is connected and $\tau\neq 1_{\m a}$, then $H(\m a)$ is connected, so $a$ and $b$ are weakly connected by Claim 1.
\end{proof}

\begin{claim}
 For any $\m b$ and any $\m r\leq_{sd}\m a\times\m b$, we can assume that either $R$ is the graph of a surjective homomorphism from $\m b$ onto $\m a$, or there exists some $c\in B$ such that $R[c]=A$. 
\end{claim}
\begin{proof}[Proof of the claim]
For the congruence $lk_1R$, by Claim 2 the only options are $lk_1R=0_{\m a}$ and $lk_1R=1_{\m a}$. Assume that $lk_1R=1_{\m a}$. Using the assumption that $A=\Sg(a,b)$ and according to Theorem~\ref{connected-linked}, if $tol_1R\neq A^2$, then we are done by Claim 3. So, we can assume that $tol_1R = A^2$, using again Theorem~\ref{connected-linked} and the fact that $A=\Sg^{\sm a}(a,b)$, we obtain that there exists some $c\in B$ such that $A=R[c]$. On the other hand, if $lk_1R=0_{\m a}$, then $R$ is the graph of a surjective homomorphism from $\m b$ onto $\m a$.
\end{proof}

\begin{claim}
 For any $c,d\in A$ which are not in the same weak component of $asm(\m a)$, the relation $R_{cd}:=\Sg^{\m a^2}((c,d),(d,c))$ is the graph of an automorphism $\varphi_{cd}\in \aut a$ such that $\varphi_{cd}(c)=d$, $\varphi_{cd}(d)=c$ and $\varphi_{cd}^2=id$. 
\end{claim}
\begin{proof}[Proof of the claim]
 Since $c$ and $d$ are not weakly connected, from the inductive assumption follows that $A=\Sg^{\sm a}(c,d)$, and thus $R_{cd}\leq_{sd}\m a^2$. If $R_{cd}$ is not the graph of an automorphism, then by Claim 4 there exists some $e\in A$ such that $A=R[e]$. If $\Sg(c,e)\neq A$ and $\Sg(d,e)\neq A$, then $c$ and $d$ are connected in $H(\m a)$ and we have a contradiction by Claim 1. So assume, without loss of generality, that $\Sg(c,e)=A$. Since $(d,e),(d,c)\in R$ and using idempotence in the first coordinate, we obtain that $[d]R=A$, and hence $(d,d)\in R$. From $(d,d)\in R=\Sg^{\m a^2}((c,d),(d,c))$ we obtain that there exists some term $t(x,y)$ such that $t(c,d)=t(d,c)=d$. Together with the idempotence, we know that $t$ acts as a semilattice operation on $\{c,d\}$, so by Theorem~\ref{subuniversesminTaylor} and Base Axiom 3, $c\rightarrow_s d$. Thus, $c$ and $d$ are weakly connected, a contradiction. Hence $R_{cd}$ must be the graph of an automorphism, and the remaining claims trivially follow.
\end{proof}

\begin{claim}
 If $asm(\m a)$ is not weakly connected, then the automorphism group $\aut a$ acts transitively on $\m a$.
\end{claim}

\begin{proof}[Proof of the claim]
Consider any $c,d\in A$. If $c$ and $d$ are disconnected in $asm(\m a)$, by Claim 5 the automorphism $\varphi_{cd}$ maps $c$ to $d$. On the other hand, if $c$ and $d$ are connected in $asm(\m a)$ there must exist some $e\in A$ which is disconnected from both $c$ and $d$ in $asm(\m a)$, so the automorphism which maps $c$ to $d$ is $\varphi_{de}\circ\varphi_{ce}$.
\end{proof}

We assume from now on that $a$ and $b$ are not weakly connected in $asm(\m a)$, that $\m a=\Sg^{\sm a}(a,b)$ is tolerance-free (thus also simple), we fix $R:=R_{ab}$ and the automorphism $\varphi:=\varphi_{ab}$.

Next we consider $S:=\Sg^{\m a^3}((a,a,b),(a,b,a),(b,a,a))$. We note, as it will be used later, that $S$ is {\em symmetric}, i.e., it is invariant under permutations of coordinates, since its generating set is symmetric. Let $T(x,y)$ be the relation $T(x,y):=S(x,y,x)$.
\begin{claim}
$(a,a,a)\notin S$ and $(b,a,b)\notin S$.
\end{claim}
\begin{proof}[Proof of the claim]
If $(b,a,b)\in S$, then there exist a term $s(x,y,z)$ such that $s(b,a,a)=s(a,a,b)=b$, while $s(a,b,a)=a$. Consider the term $t(x,y,z):=s(x,s(x,y,z),z)$, it satisfies $t(b,a,a)=t(a,b,a)=t(a,a,b)=a$, and such a term $t$ is also obtained from the assumption $(a,a,a)\in S$. Applying the automorphism $\varphi$, we discover that $(\{a,b\};t)$ is the majority algebra, so by Theorem~\ref{subuniversesminTaylor} and Base Axiom 2, $a\rightarrow_{sm}b$, contradicting our assumptions.
\end{proof}

\begin{claim}
 $\mathrm{pr}_1 T\cap\mathrm{pr}_2 T\neq\emptyset$.
\end{claim}
\begin{proof}[Proof of the claim]
 Let $p$ be a prime number such that $p=3k+1$, $k\geq |A|$ and let $c(x_1,\dots,x_p)$ be a cyclic term of $\m a$. Using the notation we defined in Subsection~\ref{subsec:52}, we define the terms $$p(x,y,z):=c(x^ky^{k+1}z^k)\text{ and }q(x,y,z):=c(x^{k+1}y^{k-1}z^{k+1}).$$
Now we have
\[
p\left(
\begin{bmatrix}
	a & a & b\\
	a & b & a\\
	b & a & a
\end{bmatrix}
\right)=
\begin{bmatrix}
	c\\
	d\\
	c
\end{bmatrix}
\in S\;
\text{ and }\;
q\left(
\begin{bmatrix}
	a & a & b\\
	a & b & a\\
	b & a & a
\end{bmatrix}
\right)=
\begin{bmatrix}
	d\\
	e\\
	d
\end{bmatrix}
\in S
\]
for some $c,d\in A$, proving the Claim.
\end{proof}

\begin{claim}
 $\mathrm{pr}_1 T=A$.
\end{claim}
\begin{proof}[Proof of the claim]
Suppose first that $B=\mathrm{pr}_1 T\neq A\neq\mathrm{pr}_2 T=C$. Then $B,C\in H(\m a)$, $a\in B$, $b\in C$ and by Claim 8, $B\cap C\neq\emptyset$, in which case Claim 1 finishes the proof.

Now suppose that $B=\mathrm{pr}_1 T\neq A = \mathrm{pr}_2 T$. Since $|\mathrm{pr}_1 T|<|A|$ there can't be any homomorphisms from $\m b$ onto $\m a$, so Claim 4 implies that there exists some $c\in A$ such that $[c]T=A$. By Claim 6, there exists some automorphism $\sigma\in\aut a$ such that $\sigma(a)=c$. As $(c,\sigma(b))\in T$, it follows that $(\sigma(a),\sigma(b),\sigma(a))\in S$. As $S$ is invariant under permutations of coordinates, hence $(\sigma(a),\sigma(a),\sigma(b)),(\sigma(b),\sigma(a),\sigma(a))\in S$ and therefore, $\sigma(S)\subseteq S$. Hence $\sigma(S)=S$ as $\sigma$ is injective and $S$ is finite. That means that also $\sigma(T)=T=\sigma^{-1}(T)$ and from $\{c\}\times A\subseteq T$ follows that $\{a\}\times A=\{\sigma^{-1}(c)\}\times \sigma^{-1}(A)\subseteq T$. We obtain that $(a,a)\in T$, equivalently $(a,a,a)\in S$, contradicting Claim 7.
\end{proof}

\begin{claim}
$\mathrm{pr}_{i,j} S=A^2$ for any $i,j\leq 3$, $i\neq j$.
\end{claim}
\begin{proof}[Proof of the claim]
 We prove it for $(i,j)=(1,3)$, the rest follows by the symmetry of $S$. By the definition of $S$ we have $(a,a),(a,b),(b,a)\in \mathrm{pr}_{1,3}S$, while from Claim 9 follows that $(b,b)\in \mathrm{pr}_{1,3}S$. But these four pairs generate all of $A^2$, so the claim follows from $\mathrm{pr}_{1,3} S\leq \m a^2$.
\end{proof}
Denote by $S^c:=\{(x,y)\in A^2:(c,x,y)\in S\}$. By idempotence, $S^c\leq \m a^2$.
\begin{claim}
For any $c\in A$, $S^c$ is the graph of a bijection.
\end{claim}
\begin{proof}[Proof of the claim]
First of all, Claim 10 and the idempotence of $\m a$ imply that $S^c\leq_{sd}\m a\times\m a$. If $S^c$ is not the graph of an automorphism, by Claim 4 there exists some $d\in A$ such that $d\times A\subseteq S^c$.

Suppose first that $a=c$. In that case, either $d$ is not connected to $a$, or to $b$, in $asm(\m a)$. By the inductive assumption, either $\Sg(a,d)=A$ or $\Sg(b,d)=A$. In the first case, $(a,b),(d,b)\in S^a$ imply $(b,b)\in S^a$, hence $(a,b,b)\in S$. In the second case, $(b,a),(d,a)\in S^a$ imply $(a,a)\in S^a$, hence $(a,a,a)\in S$. Both cases are impossible by Claim 7 and the symmetry of $S$.

Now, let $a\neq c$, and let $S^a$ be the graph of an automorphism of $\m a$. We select some $e\in A$ which is not connected to $d$ in $asm(\m a)$ and such that $e\neq a$. If we were not able to make such a selection, it would mean that $asm(\m a)$ has only two (weakly) connected components, and the one which doesn't contain $d$ is $\{a\}$. Since the automorphism $\varphi$ flips these two components, both of them are of size 1, thus $|A|=2$, which is the base case.

Consider the relation $R_{de}$. By Claim 5, it is the graph of the automorphism $\varphi_{de}$. Using $(c,d,e)\in S$ and (by the symmetry of $S$) $(c,e,d)\in S$, and since $R_{de}=\Sg^{\sm a^2}((d,e),(e,d))$, we obtain $R_{de}\subseteq S^c$. Let $f:=\varphi_{de}(a)$, hence also $\varphi_{de}(f)=a$. We know that $d\neq f$ since $\varphi_{de}(d)=e\neq a$, and from $(f,a)\in R_{de}\subseteq S^c$, we obtain $(f,a)\in S^c$. By the choice of $d$, $(d,a)\in S^c$, so $(c,d,a),(c,f,a)\in S$. By the symmetry of $S$, $(a,c,d),(a,c,f)\in S$, so $S^a$ is not the graph of an automorphism of $\m a$, a contradiction.
\end{proof}

We complete the proof by noticing that, by Claim 11 and the symmetry of $S$, $S$ satisfies the conditions of Theorem~\ref{R3affine}, so $\m a$ is an affine algebra. But then $a\rightarrow_{as}b$ by Base Axiom 1. 
\end{proof}

In the remainder of this section, we turn our attention to directed paths and strong components, so we define some notation and related concepts.

\begin{df}\label{downstreamdef}
For any $x\in\{s,as,sm,asm\}$ we say that there exists a directed $x$-path from $a$ to $b$ if there is a positive integer $k$ and $a=c_0,c_1,\dots,c_k=b$ such that, for all $i<k$, $c_i\rightarrow_x c_{i+1}$. We write $a\rightarrow_x^* b$ when $a=b$ or there exists a directed $x$-path from $a$ to $b$. The relation $\rightarrow_x^*$ is a preorder, and the relation $\{(a,b):a\rightarrow_x^* b$ and $b\rightarrow_x^* a\}$ is an equivalence relation whose blocks are called the strong $x$-components. In view of Proposition~\ref{sameinsubalg}, we need not specify in which algebra we look for the directed path. The strong component of $s(\m a)$ ($as(\m a)$, $sm(\m a)$, $asm(\m a)$) which contains the element $a$ will be denoted $s(a)$ ($as(a)$, $sm(a)$, $asm(a)$).
\end{df}

\begin{df}\label{closeddef}
We say a subset $B\subseteq A\in \vr v$ is {\em s-closed} (\emph{as-closed}, \emph{sm-closed}, \emph{asm-closed}) in $\m a$ if there exists no edge $b\rightarrow_s a$ ($b\rightarrow_{as} a$, $b\rightarrow_{sm} a$, $b\rightarrow_{asm} a$) such that $b\in B$ and $a\in A\setminus B$. A strong s-component (as-component, sm-component, asm-component) which is s-closed (as-closed, sm-closed, asm-closed) is called a sink strong s-component (sink strong as-component, sink strong sm-component, sink strong asm-component). The union of sink strong components of the graphs $s(\m a)$, $as(\m a)$, $sm(\m a)$ and $asm(\m a)$ is denoted by $s-min(\m a)$, $as-min(\m a)$, $sm-min(\m a)$ and $asm-min(\m a)$, respectively.\footnote{In Bulatov's papers, these components and their unions were maximal, but since they will resemble ideals in rings, as we will see, for us it is more natural to call them minimal.}
\end{df}

Our next lemma is a part of Proposition 9.10 from \cite{dreamteam}, though our proof is much more syntactic. In \cite{dreamteam} the authors prove the equivalence, while below we prove only the implication we will use. The opposite direction will follow trivially once we prove another theorem.  

\begin{lm}\label{2abscriterion}
Let $\m a\in \vr v$ and $B\subsetneq A$. Suppose that $B$ is s-closed and also for any $a\in A\setminus B$ and any $b\in B$, there exists a directed s-path from $a$ to some $b'\in B$ which lies entirely within $\Sg(a,b)$. Then $B\lhd_2 A$.
\end{lm}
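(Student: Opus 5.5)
We plan to construct a binary term witnessing $B\lhd_2\m a$, starting from the term $f(x,y)$ of Proposition~\ref{universalmeet}. Recall its two properties: $\vr v\models f(x,f(x,y))\approx f(x,y)$, and for any $a,b$ either $f(a,b)=a$ or $a\rightarrow_s f(a,b)$. Moreover, whenever $a\rightarrow_s b$ we have $f(a,b)=f(b,a)=b$.

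The first step handles inputs with first argument in $B$. If $b\in B$ and $x\in A$, then either $f(b,x)=b\in B$ or $b\rightarrow_s f(b,x)$. Since $B$ is s-closed, in both cases $f(b,x)\in B$. So $f(B,A)\subseteq B$, and this remains true for any term built by composing $f$ with outer first argument in $B$.

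The second step handles $f(a,b)$ with $a\notin B$ and $b\in B$. We want to replace $f$ by an iterate whose value lands in $B$. Consider $g_1(x,y):=f(x,y)$ and compositions of the form $g_{i+1}(x,y):=f(g_i(x,y),y)$; more robustly, use $f(y,\cdot)$ and $f(\cdot,y)$ alternately. The hypothesis gives a directed s-path $a=c_0\rightarrow_s c_1\rightarrow_s\dots\rightarrow_s c_m=b'\in B$ inside $\Sg(a,b)$. The idea is to induct on the size of $\Sg(a,b)$ or on the length of the shortest such path to show that a sufficiently long iterate (length bounded by $|F_{\vr v}(2)|$) sends $(a,b)$ into $B$. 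We also use the fact that $B\cap \Sg(a,b)$ is nonempty and, by the first step together with s-closedness, "absorbs" along $f$.

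The cleaner route, which we would try first, works inside the free algebra. Take $\m c=\Sg(a,b)$, which is a quotient of $\m f_{\vr v}(x,y)$, and work with $C\cap B$. Define $h(x,y):=f(x,f(y,x))$-type words and iterate them so that the resulting term $h$ is idempotent in the sense $h(x,h(x,y))\approx h(x,y)$ and $h(h(x,y),x)\approx h(x,y)$, as in the proof of Proposition~\ref{universalmeet}. Then $u:=h(a,b)$ satisfies either $u=a$ or $a\rightarrow_s u$. We then argue that if $u\notin B$, we can restart from $u$ (note $\Sg(u,b)\subseteq\Sg(a,b)$) and continue descending along s-edges. Because the s-preorder within the finite set $\Sg(a,b)$ cannot descend forever, a bounded iterate reaches an s-sink of $\Sg(a,b)$ reachable from $a$. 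The existence of the path to $B$, together with s-closedness of $B$, should force every such sink reached from $a$ (relative to $b$) to lie in $B$. Finally we symmetrize to handle $(b,a)$ as well, using $f(B,A)\subseteq B$.

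The main obstacle is the step where the descent must actually move when $h(a,b)=a$ and $a\notin B$. Knowing merely that an s-path from $a$ to $B$ exists does not obviously make the term $f$ follow it. Here we would use the edge $a\rightarrow_s c_1$, which gives $f(a,c_1)=c_1$ with $c_1=p(a,b)$ for some binary term $p$. We would then build the witness as a composition that incorporates all binary terms of $\m f_{\vr v}(x,y)$ (a finite set), e.g. $f(x,f(p_1(x,y),f(p_2(x,y),\dots)))$-style nestings. Such a nesting follows every available s-edge, and the finiteness of $F_{\vr v}(2)$ makes the construction uniform over all pairs $(a,b)$.
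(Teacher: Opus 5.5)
Your preliminary observations are sound. You have $f(B,A)\subseteq B$ by s-closedness, the s-sinks of $\Sg(a,b)$ lie in $B$, and an edge $a\rightarrow_s c_1$ with $c_1=p(a,b)$ gives $f(a,c_1)=c_1$. The gap is that the step carrying the whole lemma is never established: producing one binary term $t$ with $t(a,b)\in B$ for all $a\notin B$, $b\in B$.

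\emph{The path-following step.} Iterating $u\mapsto h(u,b)$ only gives values that descend weakly in the s-preorder. Nothing you say excludes $h(a,b)=a$, in which case the iteration is constant. Nor does it exclude the values wandering inside a non-sink strong component. Ruling this out is no easier than the lemma itself.

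Your proposed fix $f(x,f(p_1(x,y),f(p_2(x,y),\dots)))$ has the wrong shape for following paths. At $(a,b)$ its value is $f(a,w)$, and Proposition~\ref{universalmeet} only tells you this is $a$ or an s-successor of $a$. So the hypothesis can be used only when the path into $B$ has length one.

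To follow a path $a=c_0\rightarrow_s c_1\rightarrow_s\dots\rightarrow_s c_n\in B$ with $c_i=q_i(a,b)$, you must nest on the left: $t_0=x$ and $t_{i+1}=f(t_i,q_{i+1})$. Then $t_{i+1}(a,b)=f(c_i,c_{i+1})=c_{i+1}$. Every term built by such right multiplications satisfies $c\rightarrow_s^* t(c,d)$ for all $c,d$, which gives $t(b,a)\in B$ by s-closedness.

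\emph{The uniformity step.} A left-nested term is tailored to one pair. Saying that ``finiteness of $F_{\vr v}(2)$ makes the construction uniform'' is not an argument. With a fixed ordering of all binary terms, the running value may already have left the intended path when the needed letter comes up. A term repaired for one pair might also, a priori, spoil another.

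The missing idea is to use the invariant $c\rightarrow_s^* t(c,d)$ together with s-closedness to make successes permanent, and then repair failing pairs one at a time. The paper takes $t$ with this invariant that maximizes the number of pairs $(a,b)\in(A\setminus B)\times B$ with $t(a,b)\in B$. If $a'=t(a,b)\notin B$ (necessarily $b'=t(b,a)\in B$), set $q(x,y)=t_{a',b'}(t(x,y),t(y,x))$. This $q$ keeps the invariant. It keeps every old success, since $t(c,d)\in B$ and $t(c,d)\rightarrow_s^* q(c,d)$ force $q(c,d)\in B$. It also sends $(a,b)$ into $B$, contradicting maximality.

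Equivalently, you could append left-nested path-following words pair by pair. This works because the hypothesis, applied to the current value $v\in\Sg(a,b)\setminus B$ and to $b$, yields a path inside $\Sg(v,b)\subseteq\Sg(a,b)$, whose vertices are binary terms evaluated at $(a,b)$.

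Finally, $B\lhd_2\m a$ requires $B$ to be a subuniverse, and you must obtain that from Theorem~\ref{abssetsubuniv}.
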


\begin{proof} Fix the term $f(x,y)$ from Proposition~\ref{universalmeet}. We first prove a claim.
\setcounter{claim}{0}
\begin{claim}
For any $a \in A\setminus B$ and $b \in B$ we can find a term $t_{a,b}(x,y)$ such that for all $c,d\in A$, $c\rightarrow_s^* t_{a,b}(c,d)$ and that $t_{a,b}(a,b), t_{a,b}(b,a)\in B$.
\end{claim}
\begin{proof}[Proof of the claim]
There must be some s-path $a \rightarrow c_1 \rightarrow\dots\rightarrow c_n$ which lies entirely in $\Sg(a,b)$ and such that $c_n \in B$. Let $c_i = q_i(a,b)$ for all $i\leq n$. Now we define inductively the terms $t_i$ as:

\[t_0(x,y) = x\text{ and }t_{i+1}(x,y) := f(t_i(x,y),q_{i+1}(x,y)).\]

Now $t_{a,b}$ is defined to be $t_n$. We note that $t_0(a,b)=a$ and inductively we prove that for all $0<i\leq n$, $t_i(a,b) = c_i$. Indeed, $$t_{i+1}(a,b)=f(t_i(a,b),q_{i+1}(a,b))=f(c_i,c_{i+1})=c_{i+1}.$$ In particular, $t_{a,b} (a,b) = c_n \in B$. On the other hand, note that $t_{a,b}$ is obtained from the variable $x$ by ``multiplying it from the right" with other terms several times, using the binary operation $f$ as the ``multiplication". Applying the second property of $f$ proved in Proposition~\ref{universalmeet} $n$ times, we obtain that $c\rightarrow_s^* t_{a,b}(c,d)$. Substituting $c=b$ and $d=a$ we get that $b\rightarrow_s^* t_{a,b}(b,a)$, and since $B$ is s-closed, it follows that $t_{a,b}(b,a)\in B$, thus proving all statements of the Claim.
\end{proof}

Returning to the main proof, now assume that the term $t(x,y)$ satisfies the following: 
\begin{enumerate}
\item For all $c,d \in A$, $c\rightarrow_s^* t(c,d)$ and 
\item For a maximal number of ordered pairs $(a,b)$ such that $a \in A\setminus B$ and $b \in B$, $t(a,b)\in B$ (we know that $t(b,a) \in B$ for all such pairs by (1) and s-closedness of $B$).
\end{enumerate}

There exists a term that satisfies property (1), such a term is $f(x,y)$, so we can find a term $t(x,y)$ which also satisfies (2).

Assume for some $a,b$ that $a \in A\setminus B$, $b\in B$ and $t(a,b) = a' \in A\setminus B$, while $t(b,a) = b'$. By the property (1), $b'$ must be in $B$. Then set $q(x,y) := t_{a',b'} (t(x,y), t(y,x))$.

First, let $c,d\in A$. Then $c\rightarrow_s^* t(c,d)$. Since $q(c,d)=t_{a',b'} (t(c,d), t(d,c))$, from Claim it follows that $t(c,d)\rightarrow_s^* q(c,d)$. Concatenating the paths, we obtain that $c\rightarrow_s^* q(c,d)$.

Next, suppose that $c\in A\setminus B$ and $d\in B$ are such that $t(c,d)\in B$. Then from Claim applied to $t_{a',b'}$ follows that $t(c,d)\rightarrow_s^* t_{a',b'} (t(c,d), t(d,c))=q(c,d)$, so from s-closedness of $B$ and $t(c,d)\in B$ follows that $q(c,d)\in B$. 

Finally, $$q(a,b) = t_{a',b'} (t(a,b), t(b,a))=t_{a',b'}(a',b')\in B,$$ so $q(x,y)$ contradicts the maximality of $t(x,y)$. This contradiction with the assumption that there exist $a \in A\setminus B$ and $b\in B$ such that $t(a,b) \in A\setminus B$ proves the lemma since, by Theorem \ref{abssetsubuniv}, absorbing sets are also subuniverses.
\end{proof}

Now we need to restate A. Bulatov's Lemma 19 from \cite{BulatovGraph2} and verify that his proof works under our assumptions.

\begin{lm}\label{shifttolchain}
Let $\m a \in \vr v$ and let $S$ be a tolerance\footnote{The lemma holds for any compatible reflexive relation, but we will only use it for tolerances.} of $\m a$. Suppose that $a,b\in A$ are such that $(a, b)$ belongs to the transitive closure of $S$, that is, let $a=c_0,c_1,\dots,c_{k-1},c_k=b$ be such that for all $i<k$, $(c_i,c_{i+1})\in S$. Moreover, suppose that, for some $i\leq k$ and $d_i\in A$, $c_i\rightarrow_s^* d_i$. Then there exist $d_0,d_1,\dots, d_{i-1},d_{i+1},
\dots, d_k\in A$ such that, for all $j\leq k$, $c_j\rightarrow_s^* d_j$ and for all $j<k$, $(d_j,d_{j+1})\in S$.
\end{lm}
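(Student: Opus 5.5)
Induct on the length of the directed s-path from $c_i$ to $d_i$, which reduces the statement to a single step $c_i\rightarrow_s d_i$. Iterating the one-step version then finishes the proof: the $d_j$ obtained at one step become the new $c_j$ for the next step, and $\rightarrow_s^*$ is transitive. If $c_i=d_i$, take $d_j=c_j$ for all $j$.

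For a single edge $c_i\rightarrow_s d_i$, use the binary term $f(x,y)$ of Proposition~\ref{universalmeet}. Propagate outward from position $i$ in both directions. Set $d_{i+1}:=f(c_{i+1},d_i)$ and, inductively, $d_{j+1}:=f(c_{j+1},d_j)$ for $j\geq i$. Symmetrically, set $d_{j-1}:=f(c_{j-1},d_j)$ for $j\leq i$.

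Each new $d_j$ is of the form $f(c_j,e)$, so Proposition~\ref{universalmeet} gives either $f(c_j,e)=c_j$ or $c_j\rightarrow_s f(c_j,e)$. In both cases $c_j\rightarrow_s^* d_j$.

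For compatibility, assume $(d_j,d_{j+1})\in S$ has been shown, where we start at $j=i$. We want $(d_{j+1},d_{j+2})\in S$. Apply $f$ coordinatewise to the pairs $(c_{j+1},c_{j+2})\in S$ and $(d_j,d_{j+1})\in S$. This gives
\[
(f(c_{j+1},d_j),\,f(c_{j+2},d_{j+1}))=(d_{j+1},d_{j+2})\in S.
\]
Here the coordinates are arranged so that $f(c_{j+1},d_j)$ is exactly $d_{j+1}$. So the natural choice is $d_{j+1}:=f(c_{j+1},d_j)$, and at every stage we use the pair $(c_{j+1},c_{j+2})$ together with the previously established pair $(d_j,d_{j+1})$.

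The issue is the starting stage. We need $(d_i,d_{i+1})\in S$, where $d_{i+1}=f(c_{i+1},d_i)$. Apply $f$ to $(c_i,c_{i+1})\in S$ and $(d_i,d_i)\in S$, the latter by reflexivity. This yields $(f(c_i,d_i),f(c_{i+1},d_i))\in S$. Since $c_i\rightarrow_s d_i$, Proposition~\ref{universalmeet} gives $f(c_i,d_i)=d_i$, so $(d_i,d_{i+1})\in S$.

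Hence the general step should be: apply $f$ to $(c_{j+1},c_{j+2})$ and $(d_{j+1},d_{j+1})$, giving $(f(c_{j+1},d_{j+1}),f(c_{j+2},d_{j+1}))$. For this we need $f(c_{j+1},d_{j+1})=d_{j+1}$, which holds by the identity $f(x,f(x,y))\approx f(x,y)$ because $d_{j+1}=f(c_{j+1},d_j)$. So define $d_{j+2}:=f(c_{j+2},d_{j+1})$. Then $(d_{j+1},d_{j+2})\in S$ follows from reflexivity and compatibility alone. Moreover $c_{j+2}\rightarrow_s^* d_{j+2}$ holds by Proposition~\ref{universalmeet}.

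The leftward direction is symmetric. $S$ is symmetric as a tolerance, and in any case the argument only uses the pair $(c_{j-1},c_j)$ and reflexivity, so it works the same way.

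The main obstacle is the one just handled: seeing that only reflexivity is needed and that the initial pair uses $f(c_i,d_i)=d_i$. This is why the reduction to single edges (where Proposition~\ref{universalmeet} applies directly) and the identity $f(x,f(x,y))\approx f(x,y)$ are the key ingredients.
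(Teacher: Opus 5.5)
Your proof is correct, but it is organized differently from the paper's.

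The paper does not reduce to a single edge or propagate outward from position $i$. Instead, writing $c_i=e_{i,0}\rightarrow_s e_{i,1}\rightarrow_s\dots\rightarrow_s e_{i,n}=d_i$, it sets $e_{j,0}:=c_j$ and $e_{j,l+1}:=f(e_{j,l},e_{i,l+1})$ for every $j$. So at stage $l$ it multiplies every element of the chain on the right by the \emph{same} element $e_{i,l+1}$. Then $(e_{j,l+1},e_{j+1,l+1})\in S$ follows in one line from $(e_{j,l},e_{j+1,l})\in S$, $(e_{i,l+1},e_{i,l+1})\in S$ and compatibility. Consistency at position $i$ is just $f(e_{i,l},e_{i,l+1})=e_{i,l+1}$.

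In your version the multiplier of $c_j$ is its neighbor's new value. You therefore need an induction along the chain, or the identity $f(x,f(x,y))\approx f(x,y)$, plus separate rightward and leftward passes. Both of your variants of the inductive step check out, as does the base case via $f(c_i,d_i)=d_i$.

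The paper's choice buys the fact that $d_j=p(c_j)$ for the single unary polynomial $p(x)=f(\dots f(f(x,e_{i,1}),e_{i,2})\dots,e_{i,n})$. This polynomial preserves every reflexive compatible relation, so the shift works uniformly for any configuration of $S$-related elements, not only along a path. The same shape of polynomial reappears as $p_j'$ in Theorem~\ref{largecentred}. Your construction is tied to the linear order of the chain, which is harmless for the lemma as stated but does not generalize as directly.
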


\begin{proof}
If $c_i=d_i$ there is nothing to prove, we can just take $d_j:=c_j$ for each $j$. So, suppose that $c_i=e_{i,0}\rightarrow_s e_{i,1}\rightarrow_s\dots\rightarrow_s e_{i,n}=d_i$. For each $j\leq k$ and $l\leq n$, such that $j\neq i$, we define $e_{j,l}$ inductively: $e_{j,0}:=c_j$ and $e_{j,l+1}:=f(e_{j,l},e_{i,l+1})$. Denote $d_j:=e_{j,n}$.

First, it is clear that the definition works also for $j=i$, since $e_{i,l}\rightarrow_s e_{i,l+1}$ and Proposition~\ref{universalmeet} imply $f(e_{i,l},e_{i,l+1})=e_{i,l+1}$.

Next, it is clear by construction that $d_j=f(f(\dots f(c_j,?),\dots,?),?)$, so again using Proposition~\ref{universalmeet} we get that $c_j\rightarrow_s^* d_j$ for each $j\leq k$.

Finally, inducting on $l$, we prove for all $j<k$ and $l\leq n$ that $(e_{j,l},e_{j+1,l})\in S$. The base case $l=0$ was assumed to be true, while from $(e_{j,l},e_{j+1,l})\in S$ and $(e_{i,l+1},e_{i,l+1})\in S$ (which we know by reflexivity of $S$) follows 
\[(e_{j,l+1},e_{j+1,l+1})=(f(e_{j,l},e_{i,l+1}),f(e_{j+1,l},e_{i_1,l+1}))\in S.\]
The above statement, in the case $l=n$, gives $(d_j,d_{j+1})\in S$, finishing the proof.
\end{proof}

The next result we prove is Theorem 21 of \cite{BulatovGraph2} (in \cite{dreamteam}, the related result is Theorem 5.19). We managed to significantly simplify both proofs.

\begin{thm}\label{sminconnected}
Let $\m a\in \vr v$ and $a,b\in s-min(\m a)$. Then $a\rightarrow_{asm}^* b$.
\end{thm}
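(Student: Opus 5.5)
I would argue by induction on $|A|$, following the pattern of the proof of Theorem~\ref{weakconnect}, and use Lemma~\ref{shifttolchain} to transport weak connections down into $s-min(\m a)$. The starting observation is Theorem~\ref{weakconnect}: $a$ and $b$ are joined by a chain $a=c_0,c_1,\dots,c_k=b$ in which each consecutive pair is joined by an asm-edge in one direction or the other. The task is to turn the backward edges into forward directed asm-paths, and s-minimality of $a$ and $b$ is what should make this possible. When $|A|=2$, the four minimal Taylor clones can be checked by hand. For example, in the semilattice case $s-min$ is the single absorbing element, so there is nothing to prove.

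For the inductive step, first suppose $\Sg(a,b)\neq A$. Strong s-components of $\Sg(a,b)$ need not be sink components of $\m a$. However, $\Sg(a,b)$ is a subuniverse, and by Proposition~\ref{sameinsubalg} the edges are the same in it. If $a$ lies in a sink s-component of $\m a$, then every element $s$-reachable from $a$ within $\Sg(a,b)$ can reach $a$ back. I would verify that $a$ and $b$ are also in sink s-components of $\Sg(a,b)$, noting that $\Sg(a,b)$ is s-closed with respect to paths that start inside it, and then apply the induction hypothesis. So we may assume $A=\Sg(a,b)$.

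Next, if $\m a$ has a nontrivial congruence $\alpha$, I would pass to $\m a/\alpha$. Homomorphism Axiom 1 maps s-paths to s-paths, so $[a]_\alpha$ and $[b]_\alpha$ stay in $s-min(\m a/\alpha)$; this needs a check via Homomorphism Axiom 2 that sink components map into sink components. Induction then gives an asm-path of blocks from $[a]_\alpha$ to $[b]_\alpha$. I would lift this path with Homomorphism Axiom 2 into a sequence $a=d_0\rightarrow_{asm} e_1$, $d_1\rightarrow_{asm} e_2,\dots$ with $e_i,d_i$ in the same block. Inside each block I would use induction to connect $e_i$ to $d_i$ by a directed path. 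For that, $e_i$ and $d_i$ would first be replaced, via Proposition~\ref{universalmeet}, by elements of the s-min part of the block that are still reachable, and then I would return to $s-min(\m a)$ using the s-closedness of the sink components containing $a$ and $b$.

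The remaining case, where $\m a$ is simple, is the heart of the proof and the place I expect the main obstacle. The idea is to take the tolerance $S=tol_1R$ for a suitable $R$, or simply the tolerance generated by an edge, connecting $a$ to $b$. Lemma~\ref{shifttolchain} then lets me push every element of the chain into $s-min(\m a)$ while keeping consecutive elements $S$-related. The difficulty is converting $S$-relatedness of two elements of $s-min$ into directed asm-reachability. For this I would try the Relational Axioms applied to $S$ (or to $\Sg^{\m a^2}$ of an edge pair) together with Lemma~\ref{2abscriterion}. If directed reachability fails, the set of elements asm-reachable from $a$, intersected with $s-min$, should be s-closed and satisfy the hypothesis of Lemma~\ref{2abscriterion}. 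It would then be a proper 2-absorbing subuniverse $B\lhd_2\m a$ containing $a$ but not $b$. Strong projectivity (Theorem~\ref{2absstrproj}) combined with $b\in s-min(\m a)$ would then give $b\rightarrow_s^* B$ via $f(b,a)$ and $b\rightarrow_s^* f(b,a)$. Since $b$ lies in a sink s-component, elements of $B$ must reach $b$ again, which contradicts the closedness of $B$. Making the s-path condition of Lemma~\ref{2abscriterion} hold inside $\Sg(\cdot,\cdot)$ is the step I would expect to be most delicate.
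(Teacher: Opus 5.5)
Your simple case does not close, and that case carries essentially all the content of the theorem. The set $B=\{x:a\rightarrow_{asm}^*x\}\cap s-min(\m a)$ cannot be shown to satisfy the hypothesis of Lemma~\ref{2abscriterion}. Take $c\notin B$ and $d\in B$.

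\begin{itemize}
\item When $\Sg(c,d)=A$ (the typical situation once $\m a$ is simple and 2-generated), the only s-path you have from $c$ ends at some $c'\in s-min(\m a)$. Proving $c'\in B$, i.e.\ $a\rightarrow_{asm}^*c'$, is the theorem itself.
\item When $\Sg(c,d)\neq A$, induction gives $c\rightarrow_s^*c'\in s-min(\Sg(c,d))$ with $d\rightarrow_{asm}^*c'$. But $c'$ need not lie in $s-min(\m a)$, and extending the s-path into $s-min(\m a)$ may leave $\Sg(c,d)$.
\end{itemize}

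Dropping the intersection and using the asm-closed set of everything asm-reachable from $a$ repairs the second case but leaves the first equally circular. The fact that nonempty asm-closed sets are 2-absorbing is Corollary~\ref{2abscharacterization}, which the paper derives from this theorem. Your closing step is sound: a nonempty $B\lhd_2\m a$ contains all of $s-min(\m a)$, since $b\rightarrow_s f(b,a)\in B$. But constructing $B$ is the whole problem. You also leave open how $S$-relatedness of two elements of $s-min(\m a)$ becomes asm-reachability; the Relational Axioms are not what does it.

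The missing idea is the paper's trichotomy for $R_b=\Sg^{\m a^2}((a,b),(b,a))$, with $a$ fixed in $s-min(\m a)\cap asm-min(\m a)$. This intersection is nonempty because $asm-min(\m a)$ is s-closed.

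\begin{itemize}
\item If some fiber $R_b[c]$ equals $A$, Homomorphism Axiom~2 for $\mathrm{pr}_2$ moves this to $(a',c'),(b',c')\in R_b$ with $c'\in s-min(\m a)$, $a'\in s(a)$, $b'\in s(b)$. Then either some pair generates a proper subalgebra (induction), or $(a,a)$ or $(b,b)$ lands in $R_b$. The latter gives a semilattice edge between $a$ and $b$, which with s-minimality puts $b$ in $asm(a)$.
\item If $lk_1R_b=A^2$ but no fiber is full, then $tol_1R_b$-related elements lie in a common proper subuniverse $R_b[e]$. After Lemma~\ref{shifttolchain} pushes the chain into $s-min(\m a)$, each consecutive pair is handled by induction. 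This is how tolerance-relatedness becomes reachability.
\item If $lk_1R_b=0_A$, then $R_b$ is an automorphism swapping $a$ and $b$. Automorphisms preserve the edge graphs, so $b\in asm-min(\m a)$.
\end{itemize}

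Hence $s-min(\m a)\subseteq asm-min(\m a)$, which is exactly what makes the case $\Sg(c,d)=A$ of Lemma~\ref{2abscriterion} go through for $B=asm-min(\m a)$. The conclusion is then a dichotomy, not a contradiction. If $asm-min(\m a)\neq A$, any two elements of $s-min(\m a)$ generate a proper subalgebra and induction applies. If $asm-min(\m a)=A$, Theorem~\ref{weakconnect} forces a single strong component.

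Two smaller repairs are also needed.
\begin{itemize}
\item $a\in s-min(\m a)$ does not give $a\in s-min(\Sg(a,b))$. In a minimal Taylor reduct of the rock--paper--scissors tournament on $\{0,1,2\}$, $0\rightarrow_s1\rightarrow_s2\rightarrow_s0$, while $s-min(\{0,1\})=\{1\}$. So you must route through $s-min(\Sg(a,b))$ and return using s-minimality in $\m a$.
\item In the congruence case, Homomorphism Axiom~2 lets you lift each edge from the endpoint already reached. You therefore never need to join an arbitrary $e_i$ to $d_i$ inside a block; only the last block $[b]_\alpha$ needs a detour through its $s-min$.
\end{itemize}
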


\begin{proof}
We prove the theorem by an induction on $|A|$. The 2-element case is handled by using Base Axioms 1-3 just like in Theorem~\ref{weakconnect}, since the two semilattices have $|asm-min(\m a)|=1$, while the other two have the graph $asm(\m a)$ strongly connected. So suppose that $|A|>2$ and that the theorem is true for all $\m b\in \vr v$ such that $|B|<|A|$.

Note that $s-min(\m a)\cap asm-min(\m a)\neq\emptyset$. Indeed, take any element $c$ of $asm-min(\m a)$. There exists a directed path $c\rightarrow_s^* d$ to an element $d\in s-min(\m a)$. Since $asm-min(\m a)$ is s-closed, that path never leaves $asm-min(\m a)$, so $d\in s-min(\m a)\cap asm-min(\m a)$.

Instead of proving the statement of the theorem, we will select and fix some $a\in s-min(\m a)\cap asm-min(\m a)$ and prove that every $b\in s-min(\m a)$ satisfies $b\in asm(a)$. If we prove that, the theorem would follow since all of $s-min(\m a)$ would be in the same strong asm-component.
\setcounter{claim}{0}
\begin{claim}
If $c,d\in s-min(\m a)$ and $\Sg(c,d)\neq A$, then $c\rightarrow_{asm}^* d\rightarrow_{asm}^* c$. In particular, if there exist $a'\in s(a)$ and $b'\in s(b)$ such that $\Sg(a',b')\neq A$, then $b\in asm(a)$.
\end{claim}
\begin{proof}[Proof of the claim]
 Let $\m b=\Sg(c,d)$. We can select $c'\in s-min(\m b)$ and $d'\in s-min(\m b)$ such that $c\rightarrow_s^* c'$ and $d\rightarrow_s^* d'$. By the inductive assumption for $\m b$, we know $c'\rightarrow_{asm}^* d'$ and $d'\rightarrow_{asm}^* c'$. Since $c\in s-min(\m a)$ and $c\rightarrow_s^* c'$, it follows that $c'\rightarrow_s^* c$, i.e., $c'\in s(c)$. Analogously, $d'\in s(d)$. Therefore, 
$$
c\rightarrow_s^* c'\rightarrow_{asm}^*d'\rightarrow_s^* d\text{ and }d\rightarrow_s^* d'\rightarrow_{asm}^* c'\rightarrow_s^* c,
$$
proving the first statement of Claim 1. 

In the case when $a'\in s(a)$, $b'\in s(b)$ and $\Sg(a',b')\neq A$, we have that $a',b'\in s-min(\m a)$ and we obtain from the first part of Claim 1 that $a'\rightarrow_{asm}^* b'\rightarrow_{asm}^* a'$. Moreover, we know $a\rightarrow_s^* a'\rightarrow_s^* a$ and $b\rightarrow_s^* b' \rightarrow_s^* b$.  Put the obtained paths together and we obtain
\[a\rightarrow_s^* a'\rightarrow_{asm}^* b'\rightarrow_s^* b\text{ and }b\rightarrow_s^* b'\rightarrow_{asm}^* a'\rightarrow_s^* a,\]
so Claim 1 holds.
\end{proof}
\begin{claim}
If there exists a nontrivial congruence $\alpha\in\Cn a$, then for all $b\in s-min(\m a)$, $b\in asm(a)$.
\end{claim}
\begin{proof}[Proof of the claim]
 By the inductive assumption, $[a]_\alpha\rightarrow_{asm}^* [b]_{\alpha}\rightarrow_{asm}^* [a]_{\alpha}$. Using Homomorphism Axiom 2 several times, we can select some $b'\in [b]_{\alpha}$ so that $a\rightarrow_{asm}^* b'$. $B:=[b]_{\alpha}$ is a proper subuniverse of $\m a$, so $\m b\in \vr v$. Let $b_1,b_1'\in s-min(\m b)$ be such that $b\rightarrow_s^* b_1$ and $b'\rightarrow_s^* b_1'$. By the inductive assumption for $\m b$, $b_1'\rightarrow_{asm}^* b_1$. Since $b\rightarrow_s^* b_1$ and $b\in s-min(\m a)$, thus $b_1\rightarrow_s^* b$ in $\m a$. Concatenating the directed paths we found so far we get
\[a\rightarrow_{asm}^* b'\rightarrow_s^* b_1'\rightarrow_{asm}^* b_1 \rightarrow_s^* b,\]
so $a\rightarrow_{asm}^* b$. The proof of $b\rightarrow_{asm}^* a$ starting from $[b]_{\alpha}\rightarrow_{asm}^* [a]_{\alpha}$ is completely analogous.
\end{proof}

Therefore we suppose that $\m a$ is simple. For each $b\in s-min(\m a)$ we define $R_b:=\Sg^{\sm a^2}((a,b),(b,a))$. Since $\Sg(a,b)=A$, the congruence $lk_1 R_b$ can either be the equality or the full relation.

{\noindent\bf Case 1: $\mathbf{lk_1 R_b=A^2}$ and $\mathbf{R_b[c]=A}$ for some $\mathbf{c\in A}$.}

In this case, we will prove that $b\in asm(a)$. Let $c'\in s-min(\m a)$ be such that $c\rightarrow_s^* c'$. Using the projection homomorphism $\mathrm{pr}_2:\m r_b\rightarrow\m a$, from $(a,c),(b,c)\in R_b$ and Homomorphism Axiom 2 we get that there must exist some $a',b'\in A$ such that in $\m r_b$ we have $(a,c)\rightarrow_s^* (a',c')$ and $(b,c)\rightarrow_s^* (b',c')$. Using Homomorphism Axiom 1 and the homomorphism $\mathrm{pr}_1:\m r_b\rightarrow\m a$ we obtain $a\rightarrow_s^* a'$ and $b\rightarrow_s^* b'$. Since $a,b\in s-min(\m a)$, we get $a'\in s(a)$ and $b'\in s(b)$, so by Claim 1, unless $b\in asm(a)$ immediately, we know that $\Sg(a',b')=A$. Since $R_b[c']\in\Sub(\m a)$ and $(a',c'),(b',c')\in R_b$, it follows that $R_b[c']=A$.

If $B=\Sg(a,c')\neq A\neq \Sg(b,c')=C$, then by Claim 1 we have $a\rightarrow_s^* c'\rightarrow_s^* a$ and $b\rightarrow_s^* c'\rightarrow_s^* b$. Consequently, $b\in asm(a)$.

On the other hand, suppose that $\Sg(a,c')=A$. Then from $(b,a),(b,c')\in R_b$ follows that $R_b[b]=A$, so $(b,b)\in R_b$. As $R_b=\Sg((a,b),(b,a))$, there exists a term $t(x,y)$ such that $t(a,b)=t(b,a)=b$. Together with the idempotence of $t$, this means that $(\{a,b\};t)$ is a semilattice, so by Theorem~\ref{subuniversesminTaylor} and Base Axiom 3, we obtain that $a\rightarrow_s b$. Since $a\in s-min(\m a)$, it follows that $b\rightarrow_s^* a$, so $b\in asm(a)$ in this case. The case when $\Sg(b,c')=A$ is analogous.

{\noindent\bf Case 2: $\mathbf{lk_1 R_b=A^2}$ and $\mathbf{R_b[c]\neq A}$ for all $\mathbf{c\in A}$.} 

In this case we will again prove $b\in asm(a)$. Since $lk_1 R_b=A^2$, the link tolerance $tol_1 R_b$ is connected, and hence there exist $a=c_0,c_1,\dots,c_k=b\in A$ such that, for all $i<k$, $(c_i,c_{i+1})\in tol_1 R_b$. Applying Lemma~\ref{shifttolchain} several times, we can find $d_0,d_1,\dots,d_k\in s-min(\m a)$ such that $c_i\rightarrow_s^* d_i$ for all $0\leq i\leq k$, and also $(d_i,d_{i+1})\in tol_1 R_b$ for all $i<k$. Of course, since $a=c_0\rightarrow_s^* d_0$ and $a\in s-min(\m a)$, thus $d_0\rightarrow_s^* a$, and similarly we also deduce $b=c_k\rightarrow_s^* d_k\rightarrow_s^* b$.

For each $i<k$, $(d_i,d_{i+1})\in tol_1 R_b$, so there exists some $e_i\in A$ such that $d_i,d_{i+1}\in R_b[e_i]$. Given that $R_b[e_i]\neq A$ (or we would be in Case 1), it is a proper subuniverse of $\m a$, and therefore $\Sg(d_i,d_{i+1})\leq R_b[e_i]<\m a$. Since also $d_i,d_{i+1}\in s-min(\m a)$, we apply Claim 1 to them to conclude that $d_i\rightarrow_{asm}^* d_{i+1}\rightarrow_{asm}^* d_i$. Therefore, 
\begin{align*}
a&=c_0\rightarrow_s^* d_0\rightarrow_{asm}^* d_1\rightarrow_{asm}^* \dots\rightarrow_{asm}^* d_k\rightarrow_s^* b\text{ and}\\
b&= c_k\rightarrow_s^* d_k\rightarrow_{asm}^* d_{k-1}\rightarrow_{asm}^* \dots\rightarrow_{asm}^* d_0\rightarrow_s^* a,
\end{align*}
and we again obtain $b\in asm(a)$.

{\noindent\bf Case 3: $\mathbf{lk_1 R_b=0_A}$.}

In this case, we can't immediately deduce $b\in asm(a)$, but we can conclude the following: Either $b\in asm(a)$, or $A=\Sg(a,b)$ and thus $R_b=\Sg((a,b),(b,a))$ is a subdirect subalgebra of $\m a^2$, and therefore $R_b$ is the graph of an automorphism $\varphi_b$ which maps $a$ to $b$ and $b$ to $a$.
\begin{claim}
$s-min(\m a)\subseteq asm-min(\m a)$.
\end{claim}
\begin{proof}[Proof of the claim]
For all $b\in s-min(\m a)$ such that $\Sg(a,b)\neq A$, or $R_b$ is in Cases 1 and 2, we know that $b\in asm(a)\subseteq asm-min(\m a)$. On the other hand, in the case when $R_b$ is the graph of $\varphi_b\in\aut\m a$, Homomorphism Axioms 1 and 2 imply that $\varphi_b$ is also an automorphism of digraphs $as(\m a)$, $sm(\m a)$ and $s(\m a)$. From $a\in s-min(\m a)\cap asm-min(\m a)$ and $\varphi_b(a)=b$ we conclude that also $b\in s-min(\m a)\cap asm-min(\m a)$. Having exhausted all cases of $b\in s-min(\m a)$, we conclude that $s-min(\m a)\subseteq asm-min(\m a)$.
\end{proof}
\begin{claim}
$asm-min(\m a)\lhd_2\m a$.
\end{claim}
\begin{proof}[Proof of the claim]
If $asm-min(\m a)=A$, there is nothing to prove, so suppose that it is not the case. Denote $B:=asm-min(\m a)$. We wish to apply Lemma~\ref{2abscriterion}, so we start by noticing that $B$ is indeed s-closed. 

As for the other condition, suppose that $c,d\in A$ are such that $c\notin B$, while $d\in B$. If $\Sg(c,d)=A$, then there exists some $c'\in s-min(\m a)$ such that $c\rightarrow_s^* c'$. By Claim 3, $c'\in B$ and the path $c\rightarrow_s^* c'$ lies entirely in $A=\Sg(c,d)$, so the condition holds in this case.

If, on the other hand, $\Sg(c,d)=C\neq A$, then there exist some $c',d'\in s-min(\m c)$ such that $c\rightarrow_s^* c'$ and $d\rightarrow_s^* d'$ and the those two s-paths are entirely within $C$. Moreover, since the theorem is true for $\m c$, by the inductive assumption, we know that $d'\rightarrow_{asm}^* c'$. So, we know that $d\rightarrow_s^* d'\rightarrow_{asm}^* c'$, and as $B=asm-min(\m a)$ is asm-closed, it follows that $c'\in B$. So the path $c\rightarrow_s^* c'\in B$ which lies entirely within $C$ proves that $B$ satisfies the conditions of Lemma~\ref{2abscriterion}, so that lemma gives us $B\lhd_2\m a$.
\end{proof}

Because of Claim 4, we know that $asm-min(\m a)$ is a subuniverse of $\m a$. If it is a proper subuniverse, then the theorem is true by Claim 1, as any two elements of $s-min(\m a)\subseteq asm-min(\m a)$ must also generate a proper subuniverse of $\m a$. If, on the other hand, $asm-min(\m a)=A$, then $asm-min(\m a)$ is weakly connected in $asm(\m a)$ by Theorem~\ref{weakconnect}. But any two different sink strong components of $asm(\m a)$ must be disconnected in $asm(\m a)$, so the only possibility is that $asm-min(\m a)$ consists of a single strong component. In that case, any two elements of $s-min(\m a)\subseteq asm-min(\m a)$ are in the same strong component of the digraph $asm(\m a)$ and we are done.

\end{proof}

Along the way to Theorem~\ref{sminconnected}, we proved two facts about $\m a\in \vr v$ which we will list below. As their proof was inside an inductive proof, it is not clear we have them proved thus far, so we reprove them as an easy corollary of Theorem~\ref{sminconnected}.

\begin{cor}\label{singleasmcomp}
Let $\m a\in\vr v$. Then $s-min(\m a)\subseteq asm-min(\m a)$ and moreover, $asm-min(\m a)$ consists of a single strong asm-component.
\end{cor}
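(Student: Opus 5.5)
The plan is to derive both statements directly from Theorem~\ref{sminconnected}, together with the fact that every element reaches $s-min(\m a)$ along an s-path.

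\emph{First statement.} Fix $b\in s-min(\m a)$ and pick any $c\in asm-min(\m a)$, which exists since $A$ is finite. Since $A$ is finite, there is an s-path $c\rightarrow_s^* d$ ending at some $d\in s-min(\m a)$. The set $asm-min(\m a)$ is asm-closed, hence s-closed, so $d\in asm-min(\m a)$. By Theorem~\ref{sminconnected}, applied to $d,b\in s-min(\m a)$, we get $d\rightarrow_{asm}^* b$. Asm-closedness then gives $b\in asm-min(\m a)$. Hence $s-min(\m a)\subseteq asm-min(\m a)$.

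\emph{Second statement.} Let $C_1,C_2$ be two sink strong asm-components and pick $c_i\in C_i$. As above, choose s-paths $c_i\rightarrow_s^* d_i$ with $d_i\in s-min(\m a)$. Because each $C_i$ is asm-closed and is a single strong component, $d_i\in C_i$. By Theorem~\ref{sminconnected}, $d_1\rightarrow_{asm}^* d_2$. Since $C_1$ is asm-closed, this forces $d_2\in C_1$, so $C_1=C_2$. Therefore $asm-min(\m a)$ is a single strong asm-component.

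There is no real obstacle here. The only points needing care are the existence of an s-path from any element into $s-min(\m a)$, which holds because every finite digraph has a sink strong component reachable from each vertex, and the observation that s-edges are asm-edges, so asm-closed sets are s-closed. No further appeal to Lemma~\ref{2abscriterion} or to the weak connectivity of Theorem~\ref{weakconnect} is needed.
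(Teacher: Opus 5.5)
Your proposal is correct and is essentially the paper's argument. The paper also takes an s-path from an element of $asm-min(\m a)$ into $s-min(\m a)$ (the observation at the start of the proof of Theorem~\ref{sminconnected}) and then combines Theorem~\ref{sminconnected} with closedness of sink components; the only cosmetic difference is that it first names the single strong asm-component $C$ containing $s-min(\m a)$ and shows every element of $asm-min(\m a)$ s-reaches $C$, whereas you compare two arbitrary sink components directly.
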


\begin{proof}
We proved at the start of the proof of Theorem~\ref{sminconnected} that $s-min(\m a)\cap asm-min(\m a)\neq\emptyset$, while Theorem~\ref{sminconnected} implies that $s-min(\m a)$ is contained within a single strong asm-component, call it $C$. Therefore, $s-min(\m a)\subseteq asm-min(\m a)$. Also, suppose that $a\in asm-min(\m a)$, then there exists a $b\in s-min(\m a)$ such that $a\rightarrow_s^* b$. Since each strong component of $asm-min(\m a)$ is s-closed, thus $b$ is in the same strong asm-component as $a$, and we know that $b\in C$, so $asm-min(\m a)$ consists of just one strong asm-component, namely $C$.
\end{proof}

We can now prove the equivalence of the two conditions in Lemma \ref{2abscriterion}, as we announced. We also provide another characterization of the binary absorption, using the digraph $asm(\m a)$, which is Theorem 5.19 in \cite{dreamteam}\footnote{Note that we use slightly different edges than \cite{dreamteam}: aside from different definitions, their a-edges and m-edges are undirected, only s-edges are directed and coincide with ours.}.

\begin{cor}[Theorem 5.19 and Proposition 9.10 of \cite{dreamteam}]\label{2abscharacterization}
Let $\m a$ be a minimal Taylor algebra and $B\subseteq A$. Then the following are equivalent:
\begin{enumerate}
\item $B\lhd_2 \m a$.
\item $B$ is asm-closed.
\item $B$ is s-closed and for every $a\in A\setminus B$ and every $b\in B$ there exists a directed s-path from $a$ to some $b'\in B$ which lies entirely within $\Sg(a,b)$.
\end{enumerate}
\end{cor}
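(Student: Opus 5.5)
We prove the cycle of implications $(3)\Rightarrow(1)\Rightarrow(2)\Rightarrow(3)$, working inside the pseudovariety generated by $\m a$ with any fixed choice of colored edges satisfying the Edge Axioms (for instance, those of Theorem~\ref{ouredgesEAx}). The implication $(3)\Rightarrow(1)$ is exactly Lemma~\ref{2abscriterion} when $B\neq A$. When $B=A$ all three conditions hold trivially. When $B=\emptyset$, (1) and (2) hold vacuously, and (3) holds because every $a\in A$ has an s-path inside $\Sg(a,b)$ only if some $b$ exists, which is vacuous. So we may assume $\emptyset\neq B\subsetneq A$.

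For $(1)\Rightarrow(2)$, suppose $B\lhd_2\m a$, $b\in B$, $a\notin B$ and $b\rightarrow_{asm}a$. By Theorem~\ref{2absstrproj}, $B$ is strongly projective. Lemma~\ref{existsaterm} gives a term $t$ whose first variable is essential in $t^{\m a}$, with $t(b,c_2,\dots,c_{n-1},a)=a$. Strong projectivity then forces $t(b,\dots)\in B$, hence $a\in B$, a contradiction. Thus $B$ is asm-closed.

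The main step is $(2)\Rightarrow(3)$. Asm-closedness gives s-closedness at once, since $\rightarrow_s\subseteq\rightarrow_{asm}$. For the path condition, take $a\notin B$ and $b\in B$, and put $\m c:=\Sg(a,b)$.
\begin{itemize}
\item Since $C$ is finite, we can follow s-edges inside $C$ from $a$ and reach some $a'\in s-min(\m c)$ with $a\rightarrow_s^* a'$ along a path within $C$. Likewise we obtain $b\rightarrow_s^* b'\in s-min(\m c)$ along a path within $C$. Edges are computed in $\m c$, which is legitimate by Proposition~\ref{sameinsubalg}.
\item Since $B$ is s-closed, $b'\in B$.
\item By Theorem~\ref{sminconnected} applied to $\m c$, we have $b'\rightarrow_{asm}^* a'$. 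Asm-closedness of $B$ then gives $a'\in B$.
\end{itemize}
The s-path $a\rightarrow_s^* a'$ therefore lies in $\Sg(a,b)$ and ends in $B$, which is (3).

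The only delicate point I anticipate is making sure that the edge relations used in $\m c$ agree with those in $\m a$, so that closedness transfers between the two algebras. Proposition~\ref{sameinsubalg} handles this.
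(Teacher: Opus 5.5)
Your proof is correct and follows the paper's own route. You get (1)$\Rightarrow$(2) from strong projectivity and Lemma~\ref{existsaterm}, (3)$\Rightarrow$(1) from Lemma~\ref{2abscriterion}, and (2)$\Rightarrow$(3) by passing to s-minimal elements of $\Sg(a,b)$ and applying Theorem~\ref{sminconnected}; the only difference is that you treat the case $\Sg(a,b)=A$ together with the proper-subalgebra case, whereas the paper handles it separately through Corollary~\ref{singleasmcomp}, and this merger is valid and slightly cleaner.
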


\begin{proof}
$(1)\Rightarrow(2)$: Assume that $B\lhd_2\m a$. By Theorem~\ref{2absstrproj}, $B$ is a strongly projective subuniverse of $\m a$, so Lemma~\ref{existsaterm} implies that $B$ is asm-closed.

$(2)\Rightarrow (3)$: Suppose that $B$ is asm-closed, hence it is s-closed. For the second condition, assume that $a\in A\setminus B$ and $b\in B$. Since $B$ is asm-closed, hence $B$ is the union of a down-set (order ideal) in the poset of strong asm-components. In any case, $asm-min(\m a)$ is the least strong asm-component, so $asm-min(\m a)\subseteq B$. 

If $\Sg(a,b)=A$, take any $a'\in s-min(\m a)$ such that $a\rightarrow_s^* a'$. We know from Corollary~\ref{singleasmcomp} that $a'\in asm-min(\m a)\subseteq B$, and the whole s-path $a\rightarrow_s^* a'$ lies in $A=\Sg(a,b)$.

On the other hand, if $\Sg(a,b)=C\neq A$, then there exist $a',b'\in s-min(\m c)$ such that $a\rightarrow_s^* a'$, $b\rightarrow_s^* b'$ and both directed paths lie entirely in $C$. Applying Theorem~\ref{sminconnected} to $\m c$, we obtain $b'\rightarrow_{asm}^* a'$. Since $b\in B$, $b\rightarrow_{asm}^* a'$ and $B$ is asm-closed, we obtain that $a'\in B$. But this, together with the path $a\rightarrow_s^* a'$ which lies in $C$ is exactly what is needed to prove $(3)$.

$(3)\Rightarrow (1)$ was established in Lemma~\ref{2abscriterion}.
\end{proof}

\section{M-reduced templates}

In the previous section we described binary absorption. Now we turn to another potential application of binary absorption to solving CSPs. To explain the context, we recall the consistent maps from \cite{SMB2} (first appeared in \cite{M2}, also appearing in \cite{Budich}).

\begin{df}\label{consmaps}
Let $\vr t$ be an M-template and $(V,D,\vr c)$ be a multisorted instance of $CSP(\vr t)$. A set $p = \{\, p_i \mid i\in V \,\}$ of maps is {\em consistent} with $P$ if
for all $i\in V$ the map $p_i$ is a unary polynomial of $\m a_i$, and 
for every constraint $(S,R)$ and tuple $r\in R$ the tuple
$p|_S(r) = \lb p_i(r_i) : i\in S\rb$ is also in $R$.
We say that $p$ is {\em retractive} if $p_i(x)$ is a retraction for all $i\in V$.
Let $p$ be a retractive consistent set of maps. The {\em retraction} of $P$ via $p$ is the new instance $p(P)$ of $CSP(\vr t)$ defined as
\[ p(P) = (\,V, \{p_i(\m a_i) \mid i\in V\,\},\{\ (S,p|_S (R))\mid (S,R)\in \vr c \,\}\,). \]
\end{df}

It easily follows from the definition that for each constraint $(S,R)$, the relation
\[ p|_S(R) = \{\, p|_S(r) \mid r\in R \,\} = R\cap\prod_{i\in S}p_i(A_i) \] 
is a subuniverse of $\prod_{i\in S} p_i(\m a_i)$. Also, if $P$ is $(k,l)$-minimal, then so is its retraction $p(P)$.

\begin{lm}\label{l:retraction}
Let $P$ be a multisorted instance of $CSP(\vr t)$ for some M-template $\vr t$ and let $p$ be a consistent set of retractions. Then $P$ has a solution if and only if $p(P)$ does.
\end{lm}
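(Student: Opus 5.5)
The argument is direct, with one implication for each direction.

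\emph{($\Leftarrow$)} Suppose $g$ is a solution of $p(P)$, so $g(i)\in p_i(A_i)\subseteq A_i$ for every $i\in V$. Take any constraint $(S,R)$ of $P$. Then $g|_S$ lies in $p|_S(R)$. As noted right after Definition~\ref{consmaps}, $p|_S(R)=R\cap\prod_{i\in S}p_i(A_i)\subseteq R$; directly, $g|_S=p|_S(r)$ for some $r\in R$, and $p|_S(r)\in R$ by consistency. Hence $g|_S\in R$, and $g$ is already a solution of $P$.

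\emph{($\Rightarrow$)} Let $f$ be a solution of $P$. Define $g(i):=p_i(f(i))$ for each $i\in V$; then $g(i)\in p_i(A_i)$, so $g$ lies in the product of the new domains. For a constraint $(S,R)$ we have $f|_S\in R$, hence $g|_S=p|_S(f|_S)\in p|_S(R)$, which is exactly the constraint relation of $p(P)$ on scope $S$. So $g$ is a solution of $p(P)$.

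Neither direction is an obstacle. The only bookkeeping point is that $p(P)$ is a legitimate instance of $CSP(\vr t)$ in the sense of Definition~\ref{multisortedCSP}. Each $p_i(\m a_i)$ is a unary polynomial retract of $\m a_i$, so it belongs to $\vr t$ because $\vr t$ is an M-template. Each $p|_S(R)$ is a subuniverse of $\prod_{i\in S}p_i(\m a_i)$, as remarked after Definition~\ref{consmaps}. It is subdirect: for $i\in S$ and $x\in p_i(A_i)$, pick $r\in R$ with $r_i=x$ (subdirectness of $R$); then $p|_S(r)$ has $i$-th coordinate $p_i(x)=x$, since $p_i$ is a retraction. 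Retractivity is used only here; the equivalence of solvability itself needs only consistency.
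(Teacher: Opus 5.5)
Your proof is correct: applying $p$ coordinatewise to a solution of $P$ gives a solution of $p(P)$ directly from the definition of $p|_S(R)$, and conversely consistency gives $p|_S(R)\subseteq R$, so every solution of $p(P)$ already solves $P$. The paper states this lemma without proof (it is imported from earlier work); your argument is the standard one and agrees with the remark $p|_S(R)=R\cap\prod_{i\in S}p_i(A_i)$ made just before the lemma, and your extra check that $p(P)$ is a genuine subdirect instance, which is where retractivity is used, is a good addition.
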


\begin{df}\label{defeliminated}
We say that an idempotent algebra $\m b$ can be {\em eliminated}, if whenever 
$\vr t$ is an M-template such that $\m B\in\vr t$, and $\vr t\setminus\{\m B\}$ is also a template, and $CSP(\vr t\setminus\{\m B\})$ is tractable, then $CSP(\vr t)$ is also tractable.
\end{df}

One way to prove some algebras can be eliminated is the following lemma (first appeared in the unpublished note \cite{M2}, now also Lemma 4.9 in \cite{SMB2}):

\begin{lm}
\label{l:withC}
Let $\m B$ be an algebra and $t$ be a binary term 
of $\m B$ such that for each $b\in B$ the map $t_b(x) = t(b,x)$ is a retraction which is not surjective.
Let $C$ be the set of elements $c\in B$ such that $x\mapsto t(x,c)$ is a permutation. If $C$ generates a proper subuniverse of $\m B$, then
$\m B$ can be eliminated.
\end{lm}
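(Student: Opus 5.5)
The plan is a reduction: given an M-template $\vr t$ with $\m B\in\vr t$, $\vr t\setminus\{\m B\}$ a template and $CSP(\vr t\setminus\{\m B\})$ tractable, I will turn every instance $P$ of $CSP(\vr t)$ into polynomially many instances in which no domain is isomorphic to $\m B$. I first replace $P$ by an equivalent $(1,1)$-minimal instance, so every constraint relation is subdirect. Let $V_B$ be the set of variables whose domain is isomorphic to $\m B$; if it is empty we are done. Put $D:=\Sg(C)$, a proper subuniverse of $\m B$ by hypothesis. There are two ways to shrink a domain of sort $\m B$, and the argument has a case split matching them.

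\emph{Step 1 (the $D$-branch).} I restrict every domain indexed by $V_B$ to the copy of $D$ and intersect all constraints accordingly. The resulting domains are subalgebras of members of $\vr t$, hence in the template, and $|D|<|B|$, so none is isomorphic to $\m B$. After re-running $(1,1)$-minimality, all domains are subalgebras of algebras in $\vr t\setminus\{\m B\}$ or of $\m D$, so this lies in $CSP(\vr t\setminus\{\m B\})$ and is solved by the assumed algorithm. If it has a solution, $P$ has one. Otherwise every solution of $P$ puts some variable of $V_B$ outside $D$, and in particular outside $C$.

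\emph{Step 2 (the retraction branch).} Here I use Lemma~\ref{l:retraction}. The basic observation is that for any $r_0\in R$, the coordinatewise map $r\mapsto t(r_0,r)$ sends $R$ into $R$, because $t$ is a term. Thus a family $p_i(x)=t(e_i,x)$ is consistent with $P$ as soon as $(e_i)_{i\in S}\in R$ for every constraint $(S,R)$. On sort $\m B$, each $t(e_i,\cdot)$ is by assumption a non-surjective retraction, so its image is smaller and not isomorphic to $\m B$. On the other sorts, iterate $t(e_i,\cdot)$ to an idempotent power, which is still a unary polynomial, to obtain retractions; these remain in the M-template. The role of $C$ is to supply the points $e_i$ without already knowing a solution. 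For $c\in C$ the map $x\mapsto t(x,c)$ is a permutation, so some power of it is the identity. Composing such maps in the second variable lets us move a tuple of $R$ while staying inside $R$. Meanwhile, for elements outside $C$, the first-variable maps $t(b,\cdot)$ give the shrinking retractions.

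The hard part is building the consistent family $(e_i)$ in Step 2, equivalently a tuple that satisfies every constraint simultaneously. This is essentially the solution we are looking for. What I would try first is a local-to-global argument. Fix one $i_0\in V_B$ and one $b\in B\setminus C$, and set $p_{i_0}=t(b,\cdot)$. Then propagate the choice to the other variables through the $(1,1)$-minimal, cycle-consistent structure, using the permutations $t(\cdot,c)$, $c\in C$, to correct the choices along cycles. The failure of the $D$-branch should be exactly what guarantees that this propagation never forces a value into $D$. If uniform propagation turns out to be impossible, the fallback is to branch over the polynomially many choices of $(i_0,b)$. Each branch decreases $|V_B|$ or the total domain size, so iterating Steps 1 and 2 terminates in polynomial time with an instance over $\vr t\setminus\{\m B\}$.
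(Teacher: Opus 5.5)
\textbf{There is a genuine gap, and it is exactly where you locate the hard part.} Step 2 never constructs the family $(e_i)$. For $p_i=t(e_i,\cdot)$ to be consistent you need $t(e|_S,R)\subseteq R$ for every constraint $(S,R)$ simultaneously. Your sufficient condition $e|_S\in R$ for all $(S,R)$ simply says that $e$ is a solution of $P$, so the argument is circular, and the proposal then offers only heuristics.

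\begin{itemize}
\item \emph{Propagation.} Propagating a choice $p_{i_0}=t(b,\cdot)$ through a $(1,1)$-minimal, cycle-consistent instance is not a method for producing a tuple that lies in all constraint relations at once. Cycle consistency does not imply solvability (already for systems of linear equations, and nothing in the lemma excludes such domains from $\vr t$). Even for solvable instances a greedy extension can get stuck, and you say neither why it would not nor what you conclude when it does.
\item \emph{Correcting with $C$.} The elements of $C$ cannot be used to ``correct along cycles.'' The map $r\mapsto t(r,s)$ is only guaranteed to preserve $R$ when $s\in R$. Replacing $e_i$ by $t(e_i,c)$ is not guaranteed to preserve $t(e|_S,R)\subseteq R$. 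Both moves presuppose the global object you are trying to build.
\item \emph{Branching.} Branching over $(i_0,b)$ fixes one coordinate of the family and leaves the global problem untouched. Recursing on branches also risks exponential blow-up.
\item \emph{Avoiding $D$.} The remark that failure of the $D$-branch keeps the propagation out of $D$ is unsupported and beside the point. $t(b,\cdot)$ is a non-surjective retraction for every $b\in B$, including $b\in D$, so avoiding $D$ is never what is needed.
\end{itemize}

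As a result, your hypothesis on $C$ is used only in Step 1. Step 1 is correct, but it is a side case that feeds nothing into Step 2.

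The paper's (sketched) proof neither branches nor propagates. Its entire content is the construction of a single auxiliary instance $t(P)$ of $CSP(\vr t\setminus\{\m B\})$ with two properties. First, if $P$ has a solution then so does $t(P)$. Second, any solution of $t(P)$ defines a retractive consistent family $p$ with $|p_i(A_i)|<|B|$ whenever $\m A_i\cong\m B$. The assumed algorithm is run on $t(P)$. If $t(P)$ has no solution, then $P$ has none. Otherwise $p(P)$ is an instance of $CSP(\vr t\setminus\{\m B\})$ that is equivalent to $P$ by Lemma~\ref{l:retraction}, and a second call solves it.

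The missing idea is therefore an instance over the smaller template whose solutions \emph{certify} a consistent family of shrinking unary polynomials, without first having a solution of $P$. Building it is where the hypotheses on $t$ and on $\Sg(C)$ must do their work, and your proposal contains nothing that plays this role.
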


{\em Sketch of a proof}. Suppose that both $\vr t$ and $\vr t- \{\m b\}$ are templates and $P$ is an instance of $CSP(\vr t)$. The heart of the proof is the construction of a new instance $t(P)$ of $CSP(\vr t - \{\m b\})$ with the following properties:
\begin{itemize}
\item If $P$ has a solution, then so does $t(P)$.
\item If $t(P)$ has a solution, this solution defines a set of retractive consistent maps such that, if $\m a_i=\m b$, then $|p_i(A_i)|<|B|$.
\end{itemize}
Since $\vr t-\{\m b\}$ is closed under retracts, for no $\m a_j$ the algebra $p_j(\m a_j)$ is isomorphic to $\m b$, so $p(P)$ is an instance of $CSP(\vr t - \{\m b\})$. Hence, $t(P)$ can be solved via the algorithm which solves $CSP(\vr t - \{\m b\})$ and, depending on the answer of this algorithm, we either deduce that $P$ has no solution, or find an equivalent instance of $CSP(\vr t - \{\m b\})$ to $P$ which we can solve, again. Thus $\m b$ can be eliminated.
\qed

We say that an M-template $\vr t$ is M-reduced if every maximal-sized algebra in $\vr t$ among those without binary absorbing subuniverses can not be eliminated. ``SMB algebras", which were considered in \cite{SMB2}, are Taylor algebras with two fundamental operations $d(x,y,z)$ and $x\mt y$ which are defined by some equations. In the special case of SMB algebras, the authors were able to apply Lemma~\ref{l:withC} to reduce any instance to an M-reduced instance in which every maximal-sized $\m a_i$ which has a binary absorbing subuniverse has a nice structure. In particular, there was a ``neutral element" $1$ in each such algebra $\m a$ such that, in the language of Bulatov's edges, $1\rightarrow_s x$ for all $x\in A$. Moreover, the element $1$ really acted neutrally: for each term, if we evaluate one of its variables as 1, the obtained polynomial is equal to some other term in one fewer variable.

Using this structure, the authors of \cite{SMB2} proved that an M-reduced template has a solution iff its restriction to variables which have no binary absorbing subuniverses has a solution. In the case of SMB algebras, if $\m a$ has no proper binary absorbing subuniverses, then $\m a$ has no s-edges at all, i.e., there is no subalgebra of $\m a$ which has a proper binary absorbing subuniverse. It has been known since \cite{BKS} that such algebras have ``few subpowers", so they are tractable. This is a shortcut to proving the tractability of SMB algebras, and the nice structure of M-reduced instances was proved using just the binary absorbing part of Zhuk's Reduction Theorem (Theorem~\ref{Zhlemma}). It seemed possible that this binary absorbing part of Zhuk's Reduction Theorem can be proved separately from the rest of the proof of Dichotomy Theorem, and then applied to prove the full Dichotomy just like in the case of SMB algebras.

Thus, we tried to generalize these ideas from SMB algebras to minimal Taylor algebras. Indeed, we were able to prove that any maximal-sized algebra $\m a$ in an M-reduced template has a single maximal (i.e., source) strong component in $asm-(\m a)$ (it also has a single minimal one, by Corollary~\ref{singleasmcomp}). Moreover, we were able to overcome the potential problem with the M-templates, since minimal Taylor algebras might not be closed under taking retracts. By choosing term reducts of those retracts which are minimal Taylor, Definition~\ref{defeliminated} and Lemma~\ref{l:withC} can be modified to apply to minimal Taylor algebras.

However, there exists a minimal Taylor algebra $\m a_1$ which can not be eliminated using Lemma~\ref{l:withC}, but it has more than one element in its sole maximal strong component of $\m a$. The example below appeared first in \cite{Zebnotes}, as Example 4.10.2.

\begin{ex}
The algebra $\m a_1=(\{0,1,2,3\};f)$ is defined below.

\begin{minipage}{0.44\textwidth}
\centering
\includegraphics[width = 1\textwidth]{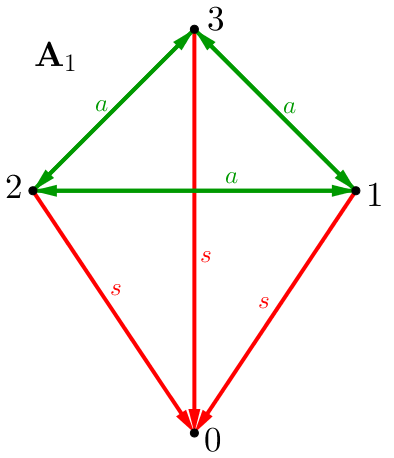}
\end{minipage}
\bigskip
\begin{minipage}{0.5\textwidth}

\begin{flushleft}

$\m A_1$ has the ternary cyclic operation $f$ given by:

\end{flushleft}

\begin{itemize}
\item $f(x,y,z)=0$ when $\{x,y,z\}=\{1,2,3\}$.
\vspace*{5mm}
\item $f(0,x,y)=0$ for all $x,y\in A$.
\vspace*{5mm}
\item On any $\{x,y\}\subseteq \{1,2,3\}$, $f$ restricts as the minority operation.
\end{itemize}

\end{minipage}

It can be proved that $\m a$ is a minimal Taylor algebra, and it has no neutral element. This is a problem since the ideas of \cite{SMB2} involved adding a solution that goes through all neutral elements to the instance, and then applying Zhuk's Reduction Theorem concluding that there must exist some other solution which is never equal to the neutral element. As opposed to a single neutral element which must be a subuniverse (by idempotence), $\{1,2,3\}$ is not a subuniverse, so adding solution(s) which are always in $\{1,2,3\}$ may generate new solutions which are not in $\{1,2,3\}$.
\end{ex}

Now we turn to another way of proving the existence of consistent maps. We need the following auxiliary statement, which is a part of Lemma 49 of \cite{BulatovGraph3}.

\begin{lm}\label{sedgeinjection}
Suppose that $\vr v$ is a pseudovariety of finite minimal Taylor algebras that satisfies Edge Axioms, $\m a\in\vr v$ and $0_{\sm a}\prec\beta$ in $\cn a$ so that $\mathrm{typ}(0_{\sm a},\beta)=\mathbf{2}$. If $C(\eta,\beta;0_{\sm a})$, and $B,C$ are two $\beta$-blocks within the same $\eta$-block such that $B\rightarrow_s C$ in $\m a/\beta$. Then for every $b\in B$ there exists a unique $c\in C$ such that $b\rightarrow_s c$. Moreover, if $b_1\rightarrow c$ and $b_2\rightarrow_s c$ for some $b_1,b_2\in B$ and $c\in C$, then $b_1=b_2$. Finally, if $b\in B$ and $c\in C$ are arbitrary, then $f(b,c)$ is the unique element of $C$ such that $b\rightarrow_s f(b,c)$.
\end{lm}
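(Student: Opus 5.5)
The plan is to get existence and the formula for $f(b,c)$ from the term $f$ of Proposition~\ref{universalmeet}. Both uniqueness statements will then follow from the term condition behind $C(\eta,\beta;0_{\sm a})$, combined with the fact that $\beta$-blocks of a type $\mathbf 2$ cover are polynomially affine.

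\emph{Step 1 (existence and the formula).} Fix $b\in B$ and $c\in C$. Since $B\rightarrow_s C$ in $\m a/\beta$, Proposition~\ref{universalmeet} applied in $\m a/\beta$ gives $f(B,C)=C$, so $f(b,c)\in C$. Because $B\neq C$, we get $f(b,c)\neq b$. The dichotomy in Proposition~\ref{universalmeet} then forces $b\rightarrow_s f(b,c)$. This shows existence, and once uniqueness is proved it also gives the final statement of the lemma.

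\emph{Step 2 (uniqueness of $c$).} Suppose $b\rightarrow_s c_1$ and $b\rightarrow_s c_2$ with $c_1,c_2\in C$, so $(c_1,c_2)\in\beta$. By Proposition~\ref{universalmeet},
\[f(b,c_i)=f(c_i,b)=c_i \quad\text{and}\quad f(c_i,c_j)\in\{c_i\}\cup\{x: c_i\rightarrow_s x\}.\]
I would use $C(\eta,\beta;0_{\sm a})$ in the following form: for a binary term $t$, parameters $u\mathrel{\eta}v$ and a pair $(x,y)\in\beta$, the equality $t(u,x)=t(u,y)$ implies $t(v,x)=t(v,y)$. This will be applied to the polynomials $x\mapsto f(u,x)$ and $x\mapsto f(x,u)$, with $u$ ranging over $\{b,c_1,c_2\}$. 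All three elements lie in one $\eta$-block, because $B$ and $C$ do.

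The goal is to show that the unary polynomial $p_b(x)=f(b,x)$ restricted to $C$ is constant. Since $(0_{\sm a},\beta)$ has type $\mathbf 2$, the restriction of $p_b$ to the $\beta$-block $C$ is an affine map of the induced module structure. It is idempotent, because $f(x,f(x,y))\approx f(x,y)$, so it is a retraction onto a coset. Transferring collapse information between the parameters $b$ and $c_1$ via the term condition should show that this coset is a single point. The reason is that $p_{c_1}$ fixes $c_1$, while the semilattice behaviour on $\{b,c_1\}$ forces $f(c_1,b)=c_1$; so $p_b$ and $p_{c_1}$ must have the same kernel on $C$. Once $p_b$ is constant on $C$, we get $c_1=p_b(c_1)=p_b(c_2)=c_2$.

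\emph{Step 3 (injectivity).} Now suppose $b_1\rightarrow_s c$ and $b_2\rightarrow_s c$. Then $f(c,b_1)=c=f(c,b_2)$. Since $c\mathrel{\eta}b_1$ and $c\mathrel{\eta}b_2$ while $(b_1,b_2)\in\beta$, the term condition gives $f(b_1,b_2)=f(b_1,b_1)=b_1$ and $f(b_2,b_1)=b_2$. It remains to combine these equalities with the affine structure on the $\beta$-block $B$, where $f$ acts as some $\alpha x+(1-\alpha)y$, and with the idempotence identities $f(x,f(x,y))\approx f(x,y)\approx f(f(x,y),x)$. This should force $b_1=b_2$, by the same argument as in Step 2 with the roles of the two coordinates exchanged.

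\emph{Main obstacle.} Step 1 is routine. The real difficulty is extracting a contradiction in Steps 2 and 3 from the term condition and the type $\mathbf 2$ hypothesis. Here I would first try to apply the term condition directly to the two-variable polynomial $f$, using the absorbing identities of the $s$-edges; the module description of the $\beta$-blocks would serve only to rule out nontrivial idempotent affine retractions.
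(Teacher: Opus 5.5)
Step 1 is fine, and slightly more economical than the paper, which gets existence from Homomorphism Axiom 2; you read existence and the formula for $f(b,c)$ directly off Proposition~\ref{universalmeet} in $\m a/\beta$.

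\textbf{Step 2: the missing fact.} The gap is in Steps 2 and 3. You never establish the one fact that makes both uniqueness claims immediate: $f(x,y)=x$ whenever $(x,y)\in\beta$. It holds for the following reason. Each $\beta$-block is an affine subalgebra, so by Stronger Base Axiom 1 there are no s-edges between distinct elements of it. Also $f(x,y)$ lies in the $\beta$-block of $x$. The dichotomy in Proposition~\ref{universalmeet} therefore forces $f(x,y)=x$. (Equivalently, writing $f=\alpha x+(1-\alpha)y$ on a block, the two identities you quote in Step 3 force $\alpha=1$.)

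In Step 2 the term condition only tells you that $p_b$ and $p_{c_1}$ have the same kernel on $C$. To conclude that $p_b$ is constant on $C$, you need $p_{c_1}$ to be constant on $C$, i.e.\ $f(c_1,c_2)=c_1$. Your stated reasons, $p_{c_1}(c_1)=c_1$ and $f(c_1,b)=c_1$, do not give this. ``Ruling out nontrivial idempotent affine retractions'' cannot do it either, since the identity map of $C$ is such a retraction. Once $f(c_1,c_2)=f(c_1,c_1)$ is known, moving the parameter from $c_1$ to $b$ gives $c_2=f(b,c_2)=f(b,c_1)=c_1$ at once. The module and coset machinery is unnecessary.

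\textbf{Step 3: wrong orientation.} Step 3 fails as written because you apply the term condition the wrong way round. From $f(c,b_1)=f(c,b_2)$ you derive $f(b_1,b_2)=b_1$ and $f(b_2,b_1)=b_2$. These are just instances of $f$ being the first projection on $\beta$-blocks, so they hold whether or not $b_1=b_2$. No combination with the affine structure or the idempotence identities can then force $b_1=b_2$.

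The useful equality is $f(b_1,c)=c=f(b_2,c)$, with the $\beta$-pair $(b_1,b_2)$ in the first slot and $c$ as the parameter in the second. Apply $C(\eta,\beta;0_{\sm a})$ to the term $t(u,x):=f(x,u)$, moving $u$ from $c$ to $b_2$ (they lie in the same $\eta$-block). This yields $f(b_1,b_2)=f(b_2,b_2)=b_2$. Together with $f(b_1,b_2)=b_1$ from the missing fact, you get $b_1=b_2$. This is exactly the paper's argument.
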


\begin{proof}
The existence of $c\in C$ such that $b\rightarrow_s c$ follows from Homomorphism Axiom 2. Let $f(x,y)$ be the term which satisfies the conclusions of Proposition~\ref{universalmeet}. 

Suppose that $b\rightarrow_s c_1$ and $b\rightarrow_s c_2$ for some $b\in B$ and $c_1,c_2\in C$. $C$ is a $\beta$-block and from $\mathrm{typ}(0_{\sm a},\beta)=\mathbf{2}$ follows that $C$ is the domain of an affine subuniverse of $\m a$. Stronger Base Axiom 1 implies that $c_1\rightarrow_s c$ holds for no $c\in C$, $c\neq c_1$. On the other hand, we know that $C$ is a subuniverse, so $f(c_1,c_2)\in C$. By Proposition~\ref{universalmeet}, it follows that $f(c_1,c_2)=c_1=f(c_1,c_1)$. By the term condition $C(\eta,\beta;0_{\sm a})$, it follows that $f(b,c_1)=f(b,c_2)$. But the assumptions $b\rightarrow_s c_1$ and $b\rightarrow_s c_2$, together with Proposition~\ref{universalmeet} imply that $f(b,c_1)=c_1$ and $f(b,c_2)=c_2$, so we get $c_1=c_2$.

Next, let $b_1\rightarrow_s c$ and $b_2\rightarrow_s c$ for some $b_1,b_2\in B$ and $c\in C$. From Proposition~\ref{universalmeet} we get that $f(b_1,c)=c=f(b_2,c)$. The term condition $C(\eta,\beta;0_{\sm a})$ implies that $f(b_1,b_2)=f(b_2,b_2)=b_2$. Analogously as above, since $B$ is the domain of an affine subalgebra of $\m a$, we conclude $f(b_1,b_2)=b_1$, so $b_1=b_2$.

Finally, under the assumptions of the last sentence, $f(b,c)\in C$, so $f(b,c)\neq b$. Therefore, by Proposition~\ref{universalmeet}, $b\rightarrow_s f(b,c)$ and by the previous parts the element $f(b,c)$ is uniquely determined in $C$.
\end{proof}

The next result, taken from \cite{Budich} together with its proof, considers multisorted CSP instances in which each $\m a_i$ is a subdirectly irreducible algebra. Any instance can be equivalently transformed into such an instance by using Birkhoff's Subdirect Decomposition Theorem, while decreasing the domains of variables which are changed, so this assumption comes at no cost. We will call an instance in which all domains of variables are subdirectly irreducible an {\em SI instance}. In a 1-minimal SI instance, we say that the domain of variable $\m a_i$ is a {\em large centralizer} domain if the centralizer condition $C(1_{\sm a_i},\mu_i;0_{\sm a_i})$ holds.

\begin{thm}\label{largecentred}
Let $\vr t$ be an M-template of minimal Taylor algebras and $P$ an SI-instance of $CSP(\vr t)$. Let $P/\overline{\mu}$ be the instance obtained from $P$ by factoring every large centralizer domain of variable $\m a_i$ by its monolith congruence $\mu_i$, and leaving all other domains of variables unchanged. Suppose that for any point $a$ in any domain of $P/\overline{\mu}$ (this point $a$ may be either an element of $A_i$, or a $\mu_i$-class, depending on whether $\m a_i$ is large centralizer domain), $P/\overline{\mu}$ has a solution $f_a$ such that $f(i)=a$. Then there exists a retractive set of consistent maps such that each large centralizer domain which has an s-edge is mapped onto a proper retract.

In particular, under above assumptions, if $\m a_i$ is a large centralizer domain and $\m b_i\lhd_2\m a_i$, then $P$ has a solution iff $P$ has a solution such that $f(i)\in B_i$.
\end{thm}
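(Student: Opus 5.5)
The plan is to build the consistent maps coordinatewise from the binary term $f(x,y)$ of Proposition~\ref{universalmeet}, evaluated along solutions of $P$ that lift the given solutions of $P/\overline{\mu}$. Lemma~\ref{sedgeinjection} supplies the needed rigidity.

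\textbf{Step 1: pick an s-edge at each large centralizer domain.} Fix a large centralizer domain $\m a_i$ that has an s-edge. Since $\m a_i$ is subdirectly irreducible with monolith $\mu_i$ and $C(1,\mu_i;0)$ holds, the monolith is abelian. So $\mathrm{typ}(0,\mu_i)=\mathbf{2}$ (a Taylor algebra omits type $\mathbf{1}$), and every $\mu_i$-block is an affine subuniverse. By Stronger Base Axiom 1 no s-edge lies inside a $\mu_i$-block, so by Homomorphism Axiom 1 every s-edge of $\m a_i$ projects to an s-edge $B\rightarrow_s C$ of $\m a_i/\mu_i$. Choose such blocks $B,C$, and take the solution $g:=f_C$ of $P/\overline{\mu}$ with $g(i)=C$.

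\textbf{Step 2: lift $g$ and define the maps.} I would lift $g$ to a genuine solution $h$ of $P$ with $h(j)\in g(j)$ for every $j$. I expect this lifting to be the main obstacle. My first attempt uses $(2,3)$-minimality, or the statement's hypothesis applied at every point, together with the fact that a solution modulo an abelian monolith can be corrected blockwise by the term condition. If that is not enough, I would weaken the goal: use only the values $g(j)$ for non-large domains, and use the $\mu_j$-block $g(j)$ elsewhere. Then I define
\[p_j(x):=f(x,h(j))\]
iterated $m$ times in the first variable; by Proposition~\ref{universalmeet}, $f(x,f(x,y))\approx f(x,y)$, so this stabilizes. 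Each $p_j$ is a unary polynomial of $\m a_j$. It preserves every constraint $R$, because $h|_S\in R$ and $f$ is a term applied coordinatewise. To make $p_j$ idempotent, I would replace it by a suitable power $p_j^{N}$ with $N=|A|!$ taken uniformly; this still preserves constraints, so the family is retractive and consistent.

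\textbf{Step 3: the retract at coordinate $i$ is proper.} Here I use Lemma~\ref{sedgeinjection} with $\eta=1$. For $b\in B$, the element $f(b,h(i))$ is the unique $c\in C$ with $b\rightarrow_s c$, and $b\mapsto f(b,h(i))$ injects $B$ into $C$. Both are $\mu_i$-blocks, which have equal size in an affine-monolith setting, so this map is a bijection $B\to C$. Hence the image of $p_i$ contains no element of $B$: every $b\in B$ is sent to some $c\in C$ with $b\rightarrow_s c$, and by Corollary~\ref{noedge} that $c$ is never sent back into $B$. Therefore $p_i(A_i)\subsetneq A_i$.

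\textbf{Step 4: all coordinates at once, and the ``in particular''.} To handle every such coordinate simultaneously, I would compose the maps obtained for the different coordinates; compositions of consistent maps are again consistent. Taking a high power keeps the whole family retractive, and the image at each handled coordinate only shrinks. For the ``in particular'' claim, suppose $B_i\lhd_2\m a_i$. By Corollary~\ref{2abscharacterization}, $B_i$ is asm-closed and contains $asm$-$min(\m a_i)$. I would iterate the retraction step, using Lemma~\ref{l:retraction} at each stage to preserve solvability, until the image at $i$ lies inside the s-closed set $B_i$, or inside an image from which $f$ moves into $B_i$. A solution of the retracted instance then has $f(i)\in B_i$, and it is also a solution of $P$.
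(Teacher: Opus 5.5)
There is a genuine gap in Step 2, and it is where the substance of the theorem lies. Your consistency argument (``$p_j$ preserves $R$ because $h|_S\in R$'') needs a genuine solution $h$ of $P$ lying over $g$. The hypotheses only concern $P/\overline{\mu}$, and a solution modulo an abelian monolith need not lift; think of linear equations over $\mathbb{Z}_4$ that are solvable modulo $2$ but not modulo $4$. The first assertion of the theorem is claimed even when $P$ has no solution at all. And even when $P$ is solvable, no solution of $P$ need take a value in the prescribed block $C$ at $i$. Neither $(2,3)$-minimality (which is not assumed anyway) nor a ``blockwise correction by the term condition'' can repair this, since lifting through abelian layers is the hard affine part of the CSP. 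Your fallback, taking arbitrary representatives $h(j)\in g(j)$, is what the paper does. But then $h|_S\notin R$ in general, and you give no reason why the maps are consistent.

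The missing idea is that at a large centralizer coordinate $j$, for every $a\in A_j$, the map $y\mapsto f(a,y)$ is constant on $\mu_j$-blocks. By Proposition~\ref{universalmeet}, either $f(a,y)=a$ or $a\rightarrow_s f(a,y)$. The $\mu_j$-blocks are affine, so by Stronger Base Axiom 1 they contain no s-edges. Hence, if $y\mathrel{\mu_j}y'$, one of two things happens:
\begin{itemize}
\item $f(a,y)=a$. This forces $f(a,y')$ into the $\mu_j$-block of $a$, and thus $f(a,y')=a$.
\item $f(a,y)$ and $f(a,y')$ are both s-edge targets of $a$ in one and the same $\mu_j$-block. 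Then they are equal by the uniqueness in Lemma~\ref{sedgeinjection} (with $\eta=1_{\m a_j}$).
\end{itemize}
So, given a constraint $(S,R)$ and $\mathbf a\in R$, choose for this constraint alone some $\mathbf b\in R$ with $\mathbf b(j)\in g(j)$ at large centralizer coordinates and $\mathbf b(j)=g(j)$ elsewhere. This is exactly what ``$g$ solves $P/\overline{\mu}$'' provides. You then get $f(\mathbf a,h|_S)=f(\mathbf a,\mathbf b)\in R$. No global lift is needed, and your Steps 3 and 4 then go through.

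For the final sentence, your iteration is also unjustified. The retracted instance need not be SI, nor satisfy the hypothesis on $P/\overline{\mu}$, so the construction cannot simply be repeated. One step suffices: choose the s-edge at $i$ to end in $B_i$, and the representative $h(i)\in B_i$. By strong projectivity (Theorem~\ref{2absstrproj}), $f(x,h(i))\in B_i$ for all $x$, and every other factor $f(\cdot,h')$ maps $B_i$ into itself. So $p_i(A_i)\subseteq B_i$, and applying $p$ to any solution of $P$ finishes.
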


\begin{proof}
Let all large centralizer domains which have s-edges be $\{\m a_i:1\leq i\leq k\}$ and for each $i\leq k$ select $a_i\rightarrow_s b_i$, $a_i\neq b_i$. Let $g_i$ be a solution of $P/\overline{\mu}$ such that $g_i(i)=[b_i]_{\mu_i}$. We define mappings $h_i\in \prod_{i\in V}A_i$ by: For each variable $j\in V$ such that $\m a_j$ is a large centralizer domain, select $h_i(j)\in g_i(j)$, while for each $j\in V$ such that $\m a_j$ is not a large centralizer domain (and hence $g_i(j)$ is an element of $A_j$), let $h_i(j)=g_i(j)$. Also, denote by $f(x,y)$ the binary term from Proposition \ref{universalmeet}. For any variable $i$, let 
\[p_j'(x)=
f(\dots f(f(x,h_1(j)),h_2(j)),\dots,h_k(j))
\]
and let $p=\{p_j:j\in V\}$ be the $M!$-composition of $p'$ with itself, where $M$ is a positive integer such that, for all $j\in V$, $M\geq |A_j|$. In other words, for all $j\in V$,
\[
p_j(x)=(\underbrace{p_j'\circ p_j'\circ\dots\circ p_j'}\limits_{M!-1\text{ signs }\circ})(x)
\]
By construction, it is clear that, for each $j\in V$, $p_j(x)\in\poln{1}{a_j}$ and that $p_j(p_j(x))=p_j(x)$ for all $x\in A_j$. 

To prove that $\{p_j:j\in V\}$ is a set of consistent maps, it suffices to prove the same for $\{p_j':j\in V\}$. Even more incrementally, to prove that $\{p_j:j\in V\}$ is a set of consistent maps, we need to prove the following 
\setcounter{claim}{0}
\begin{claim}
For every constraint $(S,R)$, every $i\in V$ and every $\mathbf{a}\in R$, there exists some $\mathbf{b}\in R$ such that, for every $j\in S$, $f(\mathbf{a}(j),\mathbf{b}(j))=f(\mathbf{a}(j),h_i(j))$.
\end{claim}
\begin{proof}[Proof of the claim]
 By the fact that $g_i$ is a solution of $P/\overline{\mu}$ and the choice of $h_i$, we can select $\mathbf{b}\in R$ such that
\begin{itemize}
	\item For every $j\in S$ such that $\m a_j$ is a large centralizer domain, $g_i(j)$ is a $\mu_j$-class, while $\mathbf{b}(j)$ and $h_i(j)$ are both in $g_i(j)$, and
	\item For every $j\in S$ such that $\m a_j$ is not a large centralizer domain, $\mathbf{b}(j)=h_i(j)= g_i(j)$.
\end{itemize}

For every $j\in S$ such that $\m a_j$ is not a large centralizer domain, we immediately obtain that $f(\mathbf{a}(j),\mathbf{b}(j))=f(\mathbf{a}(j),h_i(j))$.

On the other hand, for every $j\in S$ such that $\m a_j$ is a large centralizer domain, we consider s-edges in $\m a_j$. From the monotonicity of the centralizer and $C(1_{\sm a_j},\mu_j;0_{\sm a_j})$ follows that $C(\mu_j,\mu_j;0_{\sm a_j})$. Hence $\mu_j$ is an Abelian congruence, and since $\m a_j$ is both Taylor and idempotent, each $\mu_j$-class is an affine algebra. According to Stronger Base Axiom 1, there are no s-edges between different elements of any $\mu_j$-class. So any s-edges in $\m a_j$ are between elements of two different $\mu_j$-blocks, and by Homomorphism Axiom 1 there is an s-edge in $\m a_j/\mu_j$ between those two blocks.

By Proposition~\ref{universalmeet}, there are only two possibilities for $f(\mathbf{a}(j),\mathbf{b}(j))$: 
\[
f(\mathbf{a}(j),\mathbf{b}(j))=\mathbf{a}(j), \text{ or } \mathbf{a}(j)\rightarrow_s f(\mathbf{a}(j),\mathbf{b}(j)). 
\]

If $f(\mathbf{a}(j),\mathbf{b}(j))=\mathbf{a}(j)$, then 
\[[f(\mathbf{a}(j),h_i(j))]_{\mu_j}=[f(\mathbf{a}(j),\mathbf{b}(j))]_{\mu_j}=[\mathbf{a}(j)]_{\mu_j}.\]
We proved above that $\mathbf{a}(j)\rightarrow_s f(\mathbf{a}(j),h_i(j))$ is impossible since they are in the same $\mu_j$-class, so the remaining option is $f(\mathbf{a}(j),h_i(j))=\mathbf{a}(j)=f(\mathbf{a}(j),\mathbf{b}(j))$.

If, on the other hand, $\mathbf{a}(j)\rightarrow_s f(\mathbf{a}(j),\mathbf{b}(j))$, then $B=[\mathbf{a}(j)]_{\mu_j}$ and $C=[f(\mathbf{a}(j),\mathbf{b}(j))]_{\mu_j}$ are distinct $\mu_j$-blocks, and $B\rightarrow_s C$ in $\m a_j/\mu_j$. Moreover, from $[\mathbf{b}(j)]_{\mu_j}=[h_i(j)]_{\mu_j}$ follows that $[f(\mathbf{a}(j),h_i(j))]_{\mu_j}=C$. Hence, $\mathbf{a}(j)\rightarrow_s f(\mathbf{a}(j),h_i(j))$, too. According to Lemma~\ref{sedgeinjection}, there is a unique element $c\in C$ such that $\mathbf{a}(j)\rightarrow_s c$, and therefore $f(\mathbf{a}(j),\mathbf{b}(j))=c=f(\mathbf{a}(j),h_i(j))$.
\end{proof}

By the Claim above and using $f(\mathbf{a},\mathbf{b})\in R$, we have that $p$ is a retractive set of consistent maps.

Next we prove, for all $j\in V$ such that $\m a_j$ is a large centralizer domain, that $p_j(A_j)\subsetneq A_j$. Since $p_j$ is a composition of a sequence of unary polynomials of the form $f(x,h_i(j))$ acting on a finite set, it suffices to prove that at least one of the polynomials in this sequence is not bijective. But for the polynomial $f(x,h_j(j))$ we know that $[h_j(j)]_{\mu_j}=[b_j]_{\mu_j}$ and Homomorphism Axiom 1 implies that $[a_j]_{\mu_j}\rightarrow_s [b_j]_{\mu_j}=[h_j(j)]_{\mu_j}$. Lemmas~\ref{universalmeet} and \ref{sedgeinjection} imply that $b_j=f(a_j,b_j)=f(a_j,h_j(j))$, while the argument we made above proves that there can be no s-edges within the same $\mu_j$-class in large centralizer domains, and hence $f(b_j,h_j(j))=b_j$. So $f(a_j,h_j(j))=f(b_j,h_j(j))$ and $f(x,h_j(j))$ is not a bijection of $A_j$.

For the final sentence we may assume $B_i\subsetneq A_i$. Next, note that for all $a\in A_i\setminus B_i$ and $b\in B_i$, $a\rightarrow_s f(a,b)\in B_i$. So we may select $h_i(i)=b_i$ and the polynomial $f(x,h_i(i))$ will map $A_i$ into $B_i$. The polynomial $p_i$ is constructed by composing successively polynomials of the form $f(x,h_j(i))$, and from $\m b_i\lhd_2 \m a_i$ we get that, after the first application of the polynomial $f(x,h_i(i))$, the image of $f(x,h_j(i))$ will never leave $B_i$. Therefore, $p_i$ is a retraction of $\m a_i$ whose image is a subset of $B_i$. If $P$ has a solution $g$, by applying the consistent set of maps $\{p_j:j\in V\}$ coordinatewise to $g$, we obtain a new solution $f$ which is guaranteed to satisfy $f(i)=p_i(g(i))\in B_i$.
\end{proof}

We repeated the proof of Theorem \ref{largecentred} in order to show that the last sentence follows trivially from its proof. This sentence is a special case of Zhuk's Reduction Theorem, so we established that Bulatov's result can be applied to prove Zhuk's Reduction Theorem in this special case.

\section{Concluding remarks and further research}

In this paper we introduced a different take on Bulatov's colored edges, by focusing on the properties such objects ought to have and then finding various graphs which fit these properties. We managed to reprove some of the results in the initial part of Bulatov's theory just from these abstract properties, and there is no doubt in our minds that the whole theory can be restated in this way. Next, we simplified the proofs of Theorem 33, Theorem 36 and Corollary 38 from their original proofs in \cite{BulatovGraph1}, \cite{BulatovGraph2} and \cite{dreamteam}. Finally, we analyzed the approach via consistent maps by M. Mar\'oti and found reasons why it does not generalize fully from SMB algebras to minimal Taylor algebras. However, we managed to apply Bulatov's result in the case of ``large centralizers", with subdirectly irreducible domains of variables, to prove the binary absorbing part of Zhuk's Reduction Theorem in that case.

In subsequent projects, some authors of this paper together with G. Gyenizse and M. Kozik proceeded to investigate the two proofs of the Dichotomy Theorem, and discovered further connections between them. However, since colored edges are shown to be less useful in these subsequent results, much less than in investigating binary absorption, we split these results off into a separate paper, which is in production. More recently, another subset of authors of this paper, together with M. Mar\'oti, managed to prove the Dichotomy Theorem using consistent maps and a stronger version of Zhuk's Reduction Theorem, but only its part related to binary absorption.

\section*{Acknowledgement}

We want to express our gratitude to Professor Marcin Kozik. Our discussion with him in the Spring of 2022 set us on the path towards this paper, and all subsequent results which will be in the two papers announced above. His actual proofs will all be published in the next paper in this series, but none of the three papers would exist without his initial input.

\section*{Declarations}

\subsection*{Author contributions}

This is original research, and all authors have contributed to it. The results included in this paper have not been submitted to, or published in, other journals.

\subsection*{Ethical statement}

Ethical approval is not needed for this research since it is purely theoretical.

\subsection*{Conflict of interest statement}

The authors declare that they have no conflicts of interest. 

\subsection*{Data availability statement}

Data sharing is not applicable to this article as datasets were neither generated nor analyzed.

\subsection*{Funding statement}

We have listed all sources of financial support in the first page.

\end{document}